\definecolor{mahogany}{cmyk}{0, 0.77, 0.87, 0}
\definecolor{salmon}{cmyk}{0, 0.53, 0.38, 0}
\definecolor{melon}{cmyk}{0, 0.46, 0.50, 0}
\definecolor{yellowgreen}{cmyk}{0.44, 0, 0.74, 0}
\definecolor{brickred}{cmyk}{0, 0.89, 0.94, 0.28}
\definecolor{OliveGreen}{cmyk}{0.64, 0, 0.95, 0.40}
\definecolor{RawSienna}{cmyk}{0, 0.72, 1.0, 0.45}
\definecolor{ZurichRed}{rgb}{1, 0, 0} 
\date{\today}
\begin{document}


\newtheorem{lemma}[thm]{Lemma}
\newtheorem{proposition}{Proposition}
\newtheorem{theorem}{Theorem}[section]
\newtheorem{deff}[thm]{Definition}
\newtheorem{case}[thm]{Case}
\newtheorem{prop}[thm]{Proposition}
\newtheorem{example}{Example}

\newtheorem{corollary}{Corollary}

\theoremstyle{definition}
\newtheorem{remark}{Remark}

\numberwithin{equation}{section}
\numberwithin{definition}{section}
\numberwithin{corollary}{section}

\numberwithin{theorem}{section}

\numberwithin{remark}{section}
\numberwithin{example}{section}
\numberwithin{proposition}{section}

\newcommand{\gap}{\lambda_{2,D}^V-\lambda_{1,D}^V}
\newcommand{\gapR}{\lambda_{2,R}-\lambda_{1,R}}
\newcommand{\bD}{\mathrm{I\! D\!}}
\newcommand{\calD}{\mathcal{D}}
\newcommand{\calA}{\mathcal{A}}

\newcommand{\conjugate}[1]{\overline{#1}}
\newcommand{\abs}[1]{\left| #1 \right|}
\newcommand{\cl}[1]{\overline{#1}}
\newcommand{\expr}[1]{\left( #1 \right)}
\newcommand{\set}[1]{\left\{ #1 \right\}}

\newcommand{\cadlag}{c\`adl\`ag}
\newcommand{\calC}{\mathcal{C}}
\newcommand{\calE}{\mathcal{E}}
\newcommand{\calF}{\mathcal{F}}
\newcommand{\Rd}{\mathbb{R}^d}
\newcommand{\BR}{\mathcal{B}(\Rd)}
\newcommand{\R}{\mathbb{R}}
\newcommand{\al}{\alpha}
\newcommand{\RR}[1]{\mathbb{#1}}
\newcommand{\ga}{\gamma}
\newcommand{\om}{\omega}
\newcommand{\A}{\mathbb{A}}
\newcommand{\bH}{\mathbb{H}}

\newcommand{\bb}[1]{\mathbb{#1}}
\newcommand{\bI}{\bb{I}}
\newcommand{\bN}{\bb{N}}

\newcommand{\uS}{\mathbb{S}}
\newcommand{\M}{{\mathcal{M}}}
\newcommand{\calB}{{\mathcal{B}}}

\newcommand{\W}{{\mathcal{W}}}

\newcommand{\m}{{\mathcal{m}}}

\newcommand {\mac}[1] { \mathbb{#1} }

\newcommand{\bC}{\Bbb C}

\newtheorem{rem}[theorem]{Remark}
\newtheorem{dfn}[theorem]{Definition}
\theoremstyle{definition}
\newtheorem{ex}[theorem]{Example}
\numberwithin{equation}{section}

\newcommand{\Pro}{\mathbb{P}}
\newcommand\F{\mathcal{F}}
\newcommand\E{\mathbb{E}}
\newcommand\e{\varepsilon}
\def\H{\mathcal{H}}
\def\t{\tau}

\newcommand{\blankbox}[2]{%
  \parbox{\columnwidth}{\centering
    \setlength{\fboxsep}{0pt}%
    \fbox{\raisebox{0pt}[#2]{\hspace{#1}}}%
  }%
}


\title[Riesz transforms]
 {Sharp martingale inequalities and applications to Riesz transforms on manifolds, Lie groups and Gauss  space}

\author{Rodrigo Ba\~nuelos}\thanks{R. Ba\~nuelos is supported in part  by NSF Grant
\# 0603701-DMS}
\address{Department of Mathematics, Purdue University, West Lafayette, IN 47907, USA}
\email{banuelos@math.purdue.edu}
\author{Adam Os\c ekowski}
\address{Department of Mathematics, Informatics and Mechanics, University of Warsaw, Banacha 2, 02-097 Warsaw, Poland}
\email{ados@mimuw.edu.pl}
\thanks{A. Os\c ekowski is supported in part by Polish Ministry of Science and Higher Education (MNiSW) grant IP2011 039571 `Iuventus Plus'}

\dedicatory{In memory of Don Burkholder}

\subjclass[2010]{42B25, 60G44}

\keywords{Riesz transform, Lie group, martingale}

\begin{abstract} 
We prove  new sharp  $L^p$, logarithmic, and weak-type inequalities for martingales under the assumption of differentially subordination.   The $L^p$ estimates are ``Fyenman-Kac" type versions of Burkholder's celebrated martingale transform inequalities.  From the martingale  $L^p$ inequalities  we obtain that Riesz transforms on manifolds of nonnegative Bakry-Emery Ricci curvature have exactly the same $L^p$ bounds as those known for Riesz transforms in the flat case of $\R^n$.   From the martingale logarithmic and weak-type inequalities  we obtain similar inequalities for Riesz transforms on compact Lie groups and spheres. Combining the estimates for spheres with Poincar\'e's  limiting argument, we deduce the corresponding results for Riesz transforms associated with the Ornstein-Uhlenbeck semigroup,  thus providing  some extensions of P.A. Meyer's $L^p$ inequalities. 

\end{abstract}

\maketitle

\tableofcontents

\section{Introduction}
As evidenced in \cite{AppBan}, \cite{Ar}, \cite{BanBau}, \cite{BB}, \cite{BanJan}, \cite{BMH}, \cite{BO}, \cite{BW}, \cite{GMS}, 
\cite{O1}, \cite{O2} and many other papers, martingale inequalities play a fundamental role in obtaining sharp  behavior of $L^p$ bounds for numerous important singular integrals and Fourier multipliers operators.  Such operators include the classical first and second order Riesz transforms and a large class of multipliers obtained from certain transformation of the L\'evy-Khintchine formula, see \cite{BB}.  There has also been considerable interest in finding the exact values of various norms of other closely related operators, most notably the Beurling-Ahlfors transform on the complex plane $\bC$ and on $\R^n$ where the martingale techniques have been extremely useful. For an overview of many of these problems and their applications, we refer the reader to \cite{Ban}.
One of the motivations for investigating sharp estimates for such operators comes from the papers of Donaldson and Sullivan \cite{DS}, and Iwaniec and Martin \cite{IM1}, \cite{IM2}, in which it was pointed out that good estimates for the $L^p$ norm of the Riesz transforms on $\R^n$ and the Beurling-Ahlfors operator on $\mathbb{C}$ have important consequences in the study of quasiconformal mappings, related nonlinear geometric PDEs as well as in the $L^p$-Hodge decomposition theory.  For more on this connections, see also \cite{AstIwaMar}, \cite{Iwa}, \cite{IM1}. The purpose of this paper is to continue this line of research and to investigate explicit (tight) $L^p$,  weak-type and logarithmic inequalities for Riesz transforms on manifolds of nonnegative Ricci curvature, Lie groups and Gauss space.  
When restricted to the torus and $\R^n$, several of these bounds are sharp for Riesz transforms and hence they cannot be improved in general.  

We start with some necessary notation and present a brief review of related results from the literature. Suppose that $M$ is a complete Riemannian manifold equipped with the corresponding gradient $\nabla_M$ and the Laplace-Beltrami operator $\Delta_M$. Then $-\Delta_M$ is positive and the Riesz transform
\begin{equation}\label{generalriesz}
R^M=\nabla_M\circ (-\Delta_M)^{-1/2}
\end{equation}
is a well-defined operator on $L^2(M)$ (actually, an isometry). From this the interesting question of whether $R^M$ extends to a bounded operator on $L^p(M)$ for other $p$'s immediately arises. The first results in this direction are those of Riesz \cite{R} and concern the cases $M=\R$ and $M=\mathbb{S}^1$ where the operators reduce to the Hilbert transform.  Riesz proved that the Hilbert transform   
can be extended to a bounded operator on $L^p$, for $1<p<\infty$, but not for $p=1$ or $p=\infty$. This result was generalized by Calder\'on and Zygmund \cite{CZ} to Riesz transforms on  $\R^n$. That is, these operators also extend to bounded operators on $L^p$ if and only if $1<p<\infty$.  
 These are the classical results that the reader can find in Stein \cite{St}.  

The question concerning the precise value of the $L^p$ norms of the Hilbert  transform $R^{\R}$ and $R^{\mathbb{S}^1}$ was answered by Pichorides in \cite{P}, where it is proved that \
\begin{equation}\label{IwanMarPich1}
||R^{\R}||_{L^p(\R)\to L^p(\R)}=||R^{\mathbb{S}^1}||_{L^p(\mathbb{S}^1)\to L^p(\mathbb{S}^1)}=\cot\left(\frac{\pi}{2p^*}\right),\qquad 1<p<\infty,
\end{equation}
where 
$$p^*=\max\{p,p/(p-1)\}.
$$ 
With this we can write 
\begin{equation}\label{p^*}
p^*-1=\begin{cases} \frac{1}{p-1},  \hskip4mm  1<p\leq 2,\\
p-1 , \hskip3mm  2\leq p <\infty, 
\end{cases}
\end{equation}
which is the constant appearing in Burkholder's \cite{Bu1} celebrated work on inequalities for martingale transform.  This   quantity  will appear many times in this paper.  The bound given by  \eqref{IwanMarPich1} has been considerably extended by Iwaniec and Martin \cite{IM2} and Ba\~nuelos and Wang \cite{BW}. For a given $n$, introduce the directional Riesz transforms $R_1, R_2, \ldots, R_n$ defined by
$$
R_j=\partial_j\circ(-\Delta_{\R^n})^{-1/2},\,\,\,\, j=1,\,2,\,\ldots,\,n, 
$$
and note that
\begin{equation}
 R^{\R^n}=(R_1,R_2,\ldots,R_n)
\end{equation}
It turns out that the $L^p$ norms of the transforms $R_j$ do not depend on the dimension and are equal to Pichorides' constants.  That is, 
\begin{equation}\label{Pic}
||R_j||_{L^p(\R^n)\to L^p(\R^n)}=\cot\left(\frac{\pi}{2p^*}\right),\qquad 1<p<\infty,
\end{equation}
for all $\,j=1,\,2,\,\ldots,\,n.$ This was proved in \cite{IM2} with the use of the so-called method of rotations. The paper \cite{BW} develops a completely different proof which rests on martingale methods and which has a lot flexibility in its range of applications. 

The papers \cite{IM2} and \cite{BW} also contain tight information on the $L^p$ norm of the vectorial Riesz transform $R^{\R^n}$. Iwaniec and Martin proved that
\begin{equation}\label{IwanMarPich2}
 ||R^{\R^n}||_{L^p(\R^n)\to L^p(\R^n)}\leq 2\sqrt{2}\cot\left(\frac{\pi}{2p^*}\right),\qquad 2\leq p<\infty,
 \end{equation}
while Ba\~nuelos and Wang showed that
\begin{equation}\label{InBW}
 ||R^{\R^n}||_{L^p(\R^n)\to L^p(\R^n)}\leq 2(p^*-1),\qquad 1< p<\infty.
\end{equation}
For large $p$, the latter bound is slightly worse than the former; on the other hand, \eqref{InBW} works in the full range $1<p<\infty$. However, while we know the bound for the directional Riesz transforms in \eqref{Pic} is sharp, the sharp bound for $||R^{\R^n}||_{L^p(\R^n)\to L^p(\R^n)}$ remains open.  This is stated in \cite{Ban} as {\it Problem 6} where it is also conjectured   that the sharp bound should be  $\cot\left(\frac{\pi}{2p^*}\right)$.  We note that both \eqref{IwanMarPich2} and \eqref{InBW} do not give the sharp bound even when $p=2$, which by the Fourier transform is $1$.

One may study similar statements for Riesz transforms on manifolds as defined by \eqref{generalriesz} or Riesz transforms associated with the Ornstein-Uhlenbeck semigroup. Since Stein \cite{St0} introduced the Riesz transforms on compact Lie groups and applied Littlewood-Paley inequalities to prove the $L^p$-boundedness of these operators, many mathematicians have investigated the properties of Riesz transforms on various geometric settings. In analogy with the case $M=\R^n$, Strichartz \cite{Str} raised the question concerning the structure of the manifold $M$ which guarantees that $R^M$ extends to the bounded operator on $L^p(M)$ for $1<p<\infty$. The further crucial issue is, for such $M$, to identify the exact value of $||R^M||_{L^p(M)\to L^p(M)}$, or at least, provide a good upper bound for it. The literature on Riesz transforms on manifolds and Lie groups  is quite large by now and it would be impossible for us to give complete references here.  We refer the interested reader to Arcozzi \cite{Ar}, Auscher and Coulhon \cite{Ae0}, Auscher et al. \cite{Ae}, Bakry \cite{Bak}, Baudoin and Garofalo \cite{BauGar}, Carbonaro and Dragi\v cevi\'c \cite{CarDra}, Coulhon and Duong \cite{CT}, Coulhon and Dungey \cite{CT2}, J.-Y. Li \cite{Li0}, X.-D. Li \cite{Li}, Lohou\'e \cite{Lo} and  Strichartz \cite{Str} where many bounds are provided under curvature and other geometric assumptions on $M$.  These papers also contain many references to the enormous literature on Riesz transforms. 

In \cite{BanBau}, Ba\~nuelos and Baudoin studied a class of operators obtained by projections (conditional expectations) of certain martingales transforms on manifolds under very general conditions.  These operators contain the second order Riesz transforms on $\R^n$.  
Let 
 ${M}$ a be smooth manifold  with a volume measure $\mu$ and consider the second order operator  
\begin{equation}\label{LV}
L=-\frac{1}{2} \sum_{i=1}^n X_i^* X_i+V, 
\end{equation}
where  
$X_1,\cdots, X_n$ are locally Lipschitz vector fields defined on ${M}$, 
$X_i^*$ is the formal adjoint of $X_i$ with respect to $\mu$ and  $V:{M} \rightarrow \mathbb{R}$ is a non-positive smooth  {\it potential}. Denote by  $P_t$ the heat semigroup of the operator $L$ and let $A_{ij} : [0,+\infty) \times {M} \rightarrow \mathbb{R}$, $1\le i,j \le n$ be bounded  smooth real valued functions. Next, consider the $n\times n$ matrix  $A(t,x)=\left(A_{ij}\right)$ and set 
$$
\|A\|=\| |A(t,x)|\|_{L^{\infty} ([0,+\infty) \times \mathbb{M})},
$$
 where $|A(t,x)|$ is the usual quadratic norm of the $n\times n$ matrix $A(t,x)$. We assume that $\|A\|<\infty$ and put 
\begin{equation}
\mathcal{S}_A f=\sum_{i,j=1}^n\int_{0}^{\infty}   P_t  X_i^*  A_{ij} (t, \cdot) X_j P_t f\mbox{d}t. 
\end{equation}
It is then proved in \cite{BanBau} that there exists a constant $C_p$ depending only on $p$ such that 
\begin{equation}\label{Schro1}
\| \mathcal{S}_A f \|_{L^p(M)} \le \|A\| C_p \| f \|_{L^p(M)}, \,\,\,\, 1<p<\infty
\end{equation} 
and that if $V=0$ we can take $C_p=(p^*-1)$.  For the case $V=0$ one may apply the celebrated martingale transform inequalities of Burkholder \cite{Bu1}.  However, in order  to obtain inequality \eqref{Schro1} for non-zero $V$, a novel martingale inequality is  needed which provides an extension of the classical Burkholder-Davis-Gundy inequalities for what one may call ``Schr\"odinger-type" martingale transforms.   The new inequalities are Theorems 2.5 and 2.6 in \cite{BanBau}.  In this paper we prove sharp versions of these results.  The new inequalities are contained in  Theorems \ref{thmmart},  \ref{potential-nonmax} and \ref{potential-Thm}.  The arguments in \cite{BanBau} and  our new sharp martingale inequality  \eqref{mart_inequality} give 

\begin{theorem}\label{manifolds2}  If $L$ is as defined in \eqref{LV} with $V$ non-positive, then for $1<p<\infty$,  we have 
\begin{equation}\label{Schro2}
\| \mathcal{S}_A f \|_{L^p(M)} \le \|A \|(p^*-1)  \| f \|_{L^p(M)}.
\end{equation}
 \end{theorem}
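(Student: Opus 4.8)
The plan is to follow the strategy of \cite{BanBau}, replacing their suboptimal martingale estimate with the sharp inequality \eqref{mart_inequality} (Theorem \ref{thmmart}). The key point is that the operator $\mathcal{S}_A$ admits a probabilistic representation: running the heat semigroup $P_t$ of $L$ amounts to running a diffusion $(Z_t)$ with generator $L$, and the extra factor $e^{\int_0^t V(Z_s)\,ds}$ coming from the Feynman--Kac formula for a non-positive potential $V$ is precisely what makes the relevant pair of processes a ``Schr\"odinger-type'' martingale transform rather than an ordinary one. Concretely, for a fixed (large) time horizon $T$ one builds a martingale $X_t^T = \E[e^{\int_0^T V} f(Z_T)\mid \mathcal{F}_t]$ together with a transformed martingale $Y_t^T$ built from the same stochastic integral but with the matrix $A(T-t,\cdot)$ inserted; the matrix norm bound $|A(t,x)|\le \|A\|$ guarantees the differential subordination $d[Y^T]_t \le \|A\|^2\, d[X^T]_t$ (after normalizing, $Y^T/\|A\|$ is differentially subordinate to $X^T$), while the non-positivity of $V$ ensures the exponential multiplier only shrinks the quadratic variation, so no additional constant is lost. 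This is exactly the setup in which the sharp bound $\|Y\|_p \le (p^*-1)\|X\|_p$ applies.

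The steps, in order, are: (i) recall from \cite{BanBau} the probabilistic/background-radiation representation $\mathcal{S}_A f = -\lim_{T\to\infty}\E\big[\,\cdot\,\big]$ expressing $\mathcal{S}_A f$ as a conditional expectation (projection) of a martingale transform of the martingale associated to $f$; (ii) verify that because $V\le 0$ the Feynman--Kac weight contributes a supermartingale-type factor, so that the transformed process is, up to the factor $\|A\|$, differentially subordinate in the sense required by Theorem \ref{thmmart}; (iii) apply \eqref{mart_inequality} to get the $L^p$ bound $(p^*-1)\|A\|$ for the transformed martingale, uniformly in $T$; (iv) use the contractivity of conditional expectation on $L^p$ together with a limiting argument ($T\to\infty$, plus the stationarity/heat-kernel decay used in \cite{BanBau}) to pass from the martingale inequality to $\|\mathcal{S}_A f\|_{L^p(M)}\le \|A\|(p^*-1)\|f\|_{L^p(M)}$. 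Steps (i) and (iv) are essentially quoted from \cite{BanBau}; the content is in (ii)--(iii), i.e.\ recognizing that the Schr\"odinger-type perturbation fits the hypotheses of the new sharp martingale inequality.

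The main obstacle is step (ii): one must check carefully that the presence of the potential does not force the constant up, i.e.\ that after incorporating the Feynman--Kac exponential the pair $(X^T, Y^T)$ still satisfies \emph{exactly} the hypotheses under which \eqref{mart_inequality} holds, with no loss. This is where the sign condition $V\le 0$ is crucial and where Theorems \ref{potential-nonmax} and \ref{potential-Thm} (the sharp ``Schr\"odinger-type'' martingale inequalities referenced in the excerpt) are really doing the work; one has to match the stochastic-integral data of $(X^T,Y^T)$ --- in particular the drift/zero-order terms arising from differentiating $P_{T-t}$ and from the $e^{\int V}$ factor --- to the abstract hypotheses of those theorems. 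A secondary technical point is the uniformity of the estimate in $T$ and the justification of interchanging the limit with the $L^p$ norm, but this is routine given the heat-kernel estimates already established in \cite{BanBau}. Once (ii) is in place, the conclusion \eqref{Schro2} follows immediately by combining the sharp martingale bound with the $L^p$-contractivity of the projection.
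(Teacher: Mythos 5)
Your overall route is the paper's own: Theorem \ref{manifolds2} is proved there exactly by rerunning the argument of \cite{BanBau} for $\mathcal{S}_A$ and substituting the sharp bound \eqref{mart_inequality} of Theorem \ref{thmmart} for Theorem 2.5 of \cite{BanBau}; your steps (i), (iii) and (iv) -- the probabilistic representation of $\mathcal{S}_A f$ as a projection of the transformed process, the uniform-in-$T$ application of \eqref{mart_inequality}, and the $L^p$-contractivity of the conditional expectation plus the limit $T\to\infty$ -- are precisely that.

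Two points in your step (ii) need correcting, although they do not change the outcome. First, the differential subordination to be verified is between the unweighted $A$-transform $Y^T$ and $\|A\|X^T$, and it follows from $|A(t,x)|\le\|A\|$ exactly as in the $V=0$ case; the Feynman--Kac exponential should not be ``incorporated'' into that check at all. The weighted process is not a martingale and is not differentially subordinate to $X^T$ (this is precisely the gap in \cite{Li} that made a new inequality necessary); instead one identifies it with the process $Z$ of Theorem \ref{thmmart} by taking $\mathcal{M}_t=e^{\int_0^t V(B_s)\,\mathrm{d}s}$, i.e.\ $\mathcal{V}_t=V(B_t)\,\mathrm{Id}$, which is symmetric and non-positive precisely because $V\le 0$ -- that is the only place the sign of $V$ enters, and there is nothing further to verify about ``shrinking the quadratic variation.'' Second, Theorems \ref{potential-nonmax} and \ref{potential-Thm} are not ``really doing the work'' here: the paper states explicitly that they are not needed for the Riesz-transform applications; they are BDG-type square-function and maximal bounds with constants $D_p$ and $A_p$, which would not yield the constant $p^*-1$, and they are invoked only for estimates such as \eqref{InLi44} in the case $a>0$. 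The entire content of the proof of \eqref{Schro2} is Theorem \ref{thmmart} applied within the \cite{BanBau} scheme.
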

As already mentioned, the operators $\mathcal{S}_A$  include the second order Riesz transforms on $\R^n$ (see \cite{BanBau} for details) and hence given the results in \cite{BO}, the estimate \eqref{Schro2} cannot be improved, in general.  The novelty here again is that the behavior of the constant is the same as in the case when the potential is identically zero and the manifold is $\R^n$.   The bound $ \|A \|(p^*-1)$   should be compared with the bound $8  \| A \| (p^*-1)  \frac{p^4}{(p-1)^2}$ given in \cite[Corollary 3.2]{BanBau} which is $O(p^{3})$, as $p\to\infty$ and $O(\frac{1}{p-1})^{4}$, as $p\to 1$.  It is also interesting to note here that this theorem is proved with no geometric assumptions on the manifolds which is rare with  these type of results.

In the  papers \cite{Li, Li2, Li3}, Li  extends the Gundy-Varopoulos \cite{GV} probabilistic representation of Riesz transforms on $\R^n$ and its variant for the Beurling-Ahlfors operator by Ba\~nuelos-M\'endez \cite{BMH}, to manifolds under curvature assumptions   and obtains explicit $L^p$ bounds  which in some cases are similar to those  for the classical Riesz transforms on $\R^n$ given in \eqref{InBW}.  For example,  in  \cite{Li} (see Theorem 1.4 and Corollary 1.5) it is shown that the Riesz transforms on manifolds of nonnegative Ricci curvature are bounded on $L^p$ with bounds not exceeding $2(p^*-1)$.  However, as noted in \cite[Remark 2.1]{BanBau}, Li's paper \cite{Li}  contains a gap.  Similar gap exists in \cite{Li3} where Riesz transforms on differential forms are studied and applications to a Beurling-Ahlfors type operator on manifolds are given.  This  gap, which occurs in the probabilistic representation of the Riesz transforms and the Beurling-Ahlfors operator, is not fatal.  Indeed, as observed in \cite{BanBau}, the correction simply requires removing a non-adaptive term from inside a stochastic integral to outside the stochastic integral.  Unfortunately, and this is where the serious part of the gap arises, once this change  is made unless the curvature is identically zero,  the classical Burkholder-Davis-Gundy inequalities cannot be applied nor can one apply the  sharp martingale inequalities of Burkholder which are used in the flat case of $\R^n$ to obtain the $2(p^*-1)$ bound in \cite{BW}, and similar bounds for the Beurling-Ahlfors operator in \cite{BMH}.  For this reason, a new martingale inequality  is required.  This new martingale inequality,  which fixes the gap and restores Li's results (but not with his claimed constants), was proved  in Ba\~nuelos and Baudoin \cite{BanBau}.  Subsequently,  Li \cite{Li4},  \cite{Li5} elaborates  further on the corrections in \cite{BanBau} and, by substituting the explicit  constants given in \cite[Theorem 2.6]{BanBau} and  those of his Proposition 6.2 in \cite{Li2} gives explicit bounds which although not the same as those originally claimed are  $O(p^*-1)^{3/2}$, as $p\to 1$ and $p\to \infty$.

The new sharp martingale inequality in this paper, \eqref{mart_inequality} of Theorem \ref{thmmart} below, can be used to restore Li's bounds as originally claimed.   Rather than giving  a complete list of all the results we can prove with the new inequalities, we only give a couple of concrete  examples.  The following is a result claimed in  Theorem 1.4 and Corollary 1.5 in \cite{Li}.

\begin{theorem}\label{manifolds1}  Let $(M, g)$ be a complete Riemannian manifold with a Riemannian
metric $g$.  For $\phi\in C^2(M)$, set $L=\Delta -\nabla\phi\cdot\nabla$  and  $d\mu=e^{-\phi(x)}\sqrt{det(g(x)}dx$.  Let $Ric(L)=Ric+\nabla^2\phi$, where $\nabla^2\phi$ is the Hessian of $\phi$,  denote the Bakry-Emery Ricci curvature of $L$.  Set $R_0^L=\nabla\circ{(-L)^{-1/2}}$ and assume $Ric{(L)}\geq 0$. Then for all $f\in C_{0}^{\infty}(M)$, 
\begin{equation}\label{InLi1}
\|R_0^L(f)\|_{L^p(M)}\leq 2(p^*-1)\|f\|_{L^p(M)}, \,\,\, 1<p<\infty. 
\end{equation}
In particular, if $M$ is a complete Riemannian manifold of non-negative Ricci
curvature and we consider the Riesz transforms $R^M=\nabla_M\circ (-\Delta_M)^{-1/2}$ as defined in \eqref{generalriesz}, then 
\begin{equation}\label{InLi3}
\|R^M(f)\|_{L^p(M)}\leq 2(p^*-1)\|f\|_{L^p(M)}, \,\,\, 1<p<\infty. 
\end{equation}
Furthermore, if we set $R_a^L=\nabla\circ{(a-L)^{-1/2}}$, for $a>0$,  then under the assumption that 
$Ric{(L)}\geq -a$,  
\begin{equation}\label{InLi2}
\|R_a^L(f)\|_{L^p(\mu)}\leq 2(1+4\|\tau\|_p)(p^*-1)\|f\|_{L^p(\mu)}, \,\,\, 1<p<\infty,  
\end{equation}
where $\tau$ is the first exit time of the $3$-dimensional Brownian motion from the unit ball in $\R^3$ starting from $0$. 
\end{theorem}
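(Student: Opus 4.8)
The plan is to run the (corrected) Gundy--Varopoulos--Li probabilistic representation of the Riesz transform and then feed the resulting pair of processes into the sharp Feynman--Kac martingale inequality \eqref{mart_inequality} of Theorem \ref{thmmart}. Fix $f\in C_0^\infty(M)$, let $(X_s)$ be the diffusion generated by $L$ (equivalently, set up the ``background radiation'' process on $M\times[0,\infty)$ and let the starting height tend to $+\infty$ at the very end of the argument), and on a large time window $[0,T]$ put $u(s,x)=P_{T-s}f(x)$, so that $N_s:=u(s,X_s)$ is a martingale from which $f$ is recovered by a conditional expectation given the terminal data. Applying It\^o's formula together with the Bochner identity for $L$ then exhibits $R_0^L f$, up to the universal factor $2$ familiar from the Gundy--Varopoulos representation (the harmonic extension on the half-line), as the same kind of conditional expectation --- an $L^2$-orthogonal projection, hence an $L^p$-contraction --- of a transformed, $TM$-valued process $Y$ built from $N$.

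The decisive point, and the reason Burkholder's classical transform inequality does not suffice once the curvature fails to vanish identically, is that $Y$ is not a genuine martingale transform of $N$. After moving the non-adaptive factor out of the stochastic integral --- exactly the correction isolated in \cite{BanBau} --- the process $Y$ is a contraction transform of the martingale $N$ perturbed by an additional drift which feels the geometry only through the quantity $\mathrm{Ric}(L)(\nabla u,\nabla u)\ge0$ (via the Bochner identity for $L$). Under $\mathrm{Ric}(L)\ge0$ this is precisely the structure of a Schr\"odinger--type martingale transform governed by a non-positive potential, so Theorem \ref{thmmart} applies and gives $\|Y_T\|_{L^p}\le(p^*-1)\|N_T\|_{L^p}$. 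Pushing this through the projection, which does not increase $L^p$ norms, and then letting $T\to\infty$ --- the quantitative heat-kernel and gradient bounds for $P_sf$ and $\nabla_M P_sf$, together with stochastic completeness, being supplied by Bakry--\'Emery theory under $\mathrm{Ric}(L)\ge0$ --- yields $\|R_0^L f\|_{L^p(\mu)}\le2(p^*-1)\|f\|_{L^p(\mu)}$, which is \eqref{InLi1}. Then \eqref{InLi3} is the special case $\phi\equiv0$, for which $L=\Delta_M$, $\mu$ is the Riemannian volume and $\mathrm{Ric}(L)=\mathrm{Ric}\ge0$.

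For \eqref{InLi2} one represents $(a-L)^{-1/2}$ through the subordination formula $(a-L)^{-1/2}f=\pi^{-1/2}\int_0^\infty e^{-as}P_sf\,s^{-1/2}\,ds$, equivalently by running the diffusion up to an independent random clock carrying the law attached to the first exit time $\tau$ of $3$-dimensional Brownian motion from the unit ball. Now $\mathrm{Ric}(L)\ge-a$ only, so the curvature drift contributes a Feynman--Kac weight bounded not by $1$ but by $e^{as}$ along the path; decomposing $Y$ into its ``pure martingale transform'' part --- estimated exactly as above and contributing the summand $1$ --- and a remainder which is controlled by that exponential weight integrated against the random clock produces the extra factor $4\|\tau\|_p$, hence the constant $2(1+4\|\tau\|_p)(p^*-1)$ in \eqref{InLi2}.

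The main obstacle I anticipate is twofold. First, making the probabilistic representation fully rigorous on an arbitrary complete manifold: one must justify the passage to the infinite time horizon and the interchange of limit, integral and expectation, which requires quantitative control of the heat semigroup and its gradient extracted from $\mathrm{Ric}(L)\ge0$ (respectively $\ge-a$). Second, and more conceptual, one must verify that once the non-adaptive term has been pulled outside the stochastic integral the pair $(N,Y)$ genuinely meets the hypotheses of Theorem \ref{thmmart}, with $\mathrm{Ric}(L)$ in the role of the non-positive potential and the martingale part a contraction transform; this is the step that consumes the sign hypothesis on the curvature and that delivers the sharp factor $(p^*-1)$ rather than a worse power of $p^*-1$.
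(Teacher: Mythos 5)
Your proposal follows essentially the same route as the paper: the authors also invoke Li's probabilistic (Gundy--Varopoulos type) representation of $R_a^L$, corrected as in \cite{BanBau} by moving the non-adaptive curvature factor outside the stochastic integral, and then apply the new sharp inequality \eqref{mart_inequality} of Theorem \ref{thmmart} with $\mathcal{V}$ given by $-\mathrm{Ric}(L)$ along the diffusion, the case $a>0$ being handled exactly as you describe via subordination and the exit time $\tau$. The paper leaves these details to the reader, so your sketch matches its intended argument.
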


In \cite{CarDra}, Carbonaro and Dragi\v cevi\'c  used  Bellman function technique to prove that for any $a\geq 0$, 
\begin{equation}\label{CarDra1}
\|R_a^L(f)\|_{L^p(\mu)}\leq 12(p^*-1)\|f\|_{L^p(\mu)}, \,\,\, 1<p<\infty.   
\end{equation}
The Bellman function  techniques were  applied to study bounds for second order Riesz transforms on $\R^n$ in \cite{NazVol}.  For other similar applications, see  \cite{DraVol0}, \cite{DraVol1},  \cite{DraVol2}.  
Since (as pointed out in \cite{CarDra}) it is well known that $\|\tau\|_p\sim p$ as $p\to\infty$,  the constant in  \eqref{InLi2} is of order $p^2$, as $p\to\infty$, and thus the Carbonaro--Dragi\v cevi\'c bound \eqref{CarDra1} is better than the bound given by  \eqref{InLi2}.  Here we can improve on the estimate \eqref{InLi2} to obtain a bound valid for all $a>0$ which, although not as good as the one for $a=0$ in \eqref{InLi1}, it is of the form $c(p^*-1)$, with $c<8$,  improving on \eqref{CarDra1}.   Indeed, using  Theorem \ref{potential-nonmax} and Proposition 6.2 in \cite{Li2} we obtained (as in the proof of Theorem 2.4 (iii) in \cite{Li5})  
\begin{equation}\label{InLi44}
\|R_a^L(f)\|_{L^p(\mu)}\leq 2D_p\frac{p}{\sqrt{2(p-2)}}\|f\|_{L^p(\mu)}, \,\,\, 3\leq p<\infty,  
\end{equation}
where $D_p$ is Davis's constant in \eqref{Davis}. 
Using the fact that $D_p\leq 2\sqrt{p}$ (see Remark \ref{remarkdavis}) we see that for $3\leq p<\infty$, $$2D_p\frac{p}{\sqrt{2(p-2)}}\leq 2\sqrt{2}\,p\sqrt{\frac{p}{p-2}}\leq 2\sqrt{6}p\leq  3\sqrt{6}(p-1).$$ 
 These  calculations give that for any $a>0$, 
\begin{equation}\label{InLi4}
\|R_a^L(f)\|_{L^p(\mu)}\leq \begin{cases}
\displaystyle 2(1+4\|\tau\|_p)\,(p^*-1)\|f\|_{L^p(\mu)},& \mbox{if }1<p<3,\\ 
\displaystyle 3\sqrt{6}(p^*-1)\|f\|_{L^p(\mu)},& \mbox{if }3\leq p<\infty.
\end{cases}
\end{equation}
Since $\|\tau\|_p\leq \|\tau\|_3$ for $1<p<3$, we can also replace the first term by an absolute constant. How big is $\|\tau\|_3$? This can be easily estimated given that we know $\E_{0}\tau=\frac{1}{3}$.  Indeed, it follows from the strong Markov  property (see \cite[p. 316]{banold}) that for all $\alpha>0$, 
$$
\int_{\alpha}^{\infty}\Pro_{0}\{\tau>t\}dt\leq \E_{0}(\tau)\Pro_{0}\{\tau>\alpha\}.
$$   
Now, for a fixed $k>1$, we multiply both sides by $k\alpha^{k-1}$ and integrate over $[0,\infty)$ with respect to $\alpha$, obtaining $\E_{0}\tau^{k+1}\leq (k+1)\E_{0}(\tau)\E_{0}\tau^{k}$.  Iterating this we find that for any $k=1, 2, \dots$,  $\E_{0}\tau^{k}\leq k! \left(\E_0{\tau}\right)^k$.  In particular, $\|\tau\|_3\leq \left(\frac{2}{9}\right)^{1/3}$  and therefore  \eqref{InLi4} yields
\begin{equation}\label{InLi5}
\|R_a^L(f)\|_{L^p(\mu)}\leq \begin{cases}
\displaystyle 2\left[1+4\left(\frac{2}{9}\right)^{1/3}\right]\,(p^*-1)\|f\|_{L^p(\mu)},& \mbox{if }1<p<3,\\ 
\displaystyle 3\sqrt{6}(p^*-1)\|f\|_{L^p(\mu)},& \mbox{if }3\leq p<\infty. 
\end{cases}
\end{equation}

Of course, we picked the cutoff value $3$ for no particular reason other than the fact that it is larger than 2 (required for the bound in \eqref{InLi44}) and that both estimates in \eqref{InLi4} give less than $12$.  What is clear is that the higher we go with this split, the better the bound in the second term and the worse the bound in the first term.  Perhaps more interesting is to note that asymptotically, as $p\to\infty$, we get the behavior $2\sqrt{2}p$ from \eqref{InLi4} for all $a's$, while for $a=0$ we have $2p$ from \eqref{InLi1}. On the other hand, as $p\to 1$ we get behavior $\frac{14}{3(p-1)}$ from \eqref{InLi2}.

We note here that Theorem \ref{manifolds1} includes the classical case of the Riesz transforms for the Ornstein-Uhlenbeck (Gauss space) semigroup on $\R^n$.  In this case, as already mentioned,  the bound was established by Arcozzi \cite{Ar} and it is, asymptotically in $p$, as $p\to 1$ and $p\to \infty$, best possible; see \cite{LarCoh}.   

With the bounds of Theorem \ref{manifolds1} one can also ``restore"  {\it Conjecture 1} made in \cite{Li} that under the assumption of $Ric{L}\geq 0$, the $L^p$ norm of the operator $R_0$, for $1<p<\infty$,  should be bounded below by $c(p^*-1)(1+o(1))$, for some universal constant $c$.

The Beurling-Ahlfors operator on $\R^n$ acting on $k$-forms is defined by $S_k=(d^*\,d-d\,d^*){\square_k}^{-1}$ where $\square_k$ is the Hodge Laplacian acting on $k$ forms, $d$ is the exterior differential operator and $d^*$ is its adjoint. 
The operator  $S_k$  was studied in \cite{DS} in connections to ``Quasiconformal 4-manifolds" and properties  of its $L^p$ norm on $\R^n$ were  investigated in \cite{IM2}. In particular, with $\|S\|_p=\max_{0\leq k\leq n}\|S_k\|_p$, where $\|S_k\|_p$ is the $L^p$ norm of $S_k$,  it is proved in \cite{IM2} that 
\begin{equation}\label{IMestimate}
(p^*-1)\leq \|S\|_{p}\leq c(n+1)\,p^2,\,\,1<p<\infty,
\end{equation}
where  $c$ is a universal constant independent of $n$.  In \cite{IM2},  the authors also make the far reaching conjecture that 
for all $n\geq 2$,
$\|S\|_{p}=p^*-1, \,\,1<p<\infty.$  The lower bound follows from Lehto \cite{Le}.
 The conjectured upper bound remains open even in the case $n=2$ where it is  known as the {\it Iwaniec Conjecture} \cite{Iw}.  The best known upper bound when $n=2$ is $1.575(p^*-1)$, valid for all $1<p<\infty$ (see Ba\~nuelos and Janakiraman \cite{BanJan}), and $1.4(p^*-1)$,  valid for all $p\geq 1000$ (see Borichev, Janakiraman and Volberg \cite{BorJanVol}).  It is well known that this conjecture has  many connections to problems in quasiconformal mappings as well as being related, via the Burkholder function (\eqref{Up} below),  to the celebrated question of Morrey on rank-one-convex and quasi-convex functions.  For these connections, see \cite{AstIwaMar}, \cite{AstIwaPraSak}, \cite{Ban} \cite{Iwa}.   
 
In Ba\~nuelos and Lindeman \cite{BanLin} a  
representation of operator $S_k$ on $\R^n$, for any $n\geq 2$,  is given in terms of martingale transforms and  from this the estimate in \eqref{IMestimate} is improved to  
$$
\|S\|_{p}\leq \begin{cases} \left({n}+2\right)(p^*-1),
&\text{$2\leq n\leq 14,\text{ and even}$}\\ 
{(n+1)}(p^*-1),&\text{$3\leq n\leq 13,\text{ and odd}$}\\
{\left({4n\over 3}-2\right)(p^*-1)},&\text{otherwise}.
\end{cases}
$$
Using  the martingale techniques from \cite{BMH},  Hyt\"onen  \cite{Hyt} improved this to 
$$\|S\|_{p}\leq \left(\frac{n}{2}+1\right) (p^*-1), \,\,\,\, 1<p<\infty, 
$$ for all $n\geq 2$.  This is, as of now, the best known bound on $\R^n$ valid for all $n$.  Other improvements on the results in \cite{BanLin} are contained in Petermichl, Slavin, and Wick in \cite{PetSlaBre1}.  The weaker problem of proving that the norm $\|S\|_p$  is bounded above with a constant independent of the dimension $n$ (even at the expense of giving the right dependence on $p$) remains and interesting open problem;   see \cite[Problem 10]{Ban}.

Returning to the setting of manifolds,  Li \cite{Li3} extends the probabilistic formula in \cite{BanLin} and \cite{BMH} to give a probabilistic representation for $S_k$ on stochastically complete Riemannian manifolds with Weitzenb\"oak  curvature bounded below.  From this and martingale inequalities he concludes that  there exists a constant depending  on $k$ such that 
\begin{equation}\label{li1}
\|S_k\|_p\leq C_k(p^*-1)^{3/2}, \,\,\,\, 1<p<\infty,
\end{equation} 
when the curvature is bounded below by zero and that when the curvature is zero, then 
\begin{equation}\label{li2}
\|S_k\|_p\leq C_k(p^*-1), \,\,\,\, 1<p<\infty. 
\end{equation}
 Unfortunately, the error in the representation formula for functions in \cite{Li} is repeated in the representation formula on differential forms in  \cite{Li3}.  As before, the correction is trivially achieved by moving the non-adaptive term to outside the stochastic integral.  But also as before, once this is done the classical martingale inequalities cannot be applied.  As observed by Ba\~nuelos and Baudoin in \cite[Remark 2.1]{BanBau}, Theorem 2.6 in \cite{BanBau} restores Li's original results up to universal constants depending only on $p$.  Following \cite{BanBau}, Li \cite{Li4} and \cite{Li5} elaborates  further on these corrections and again substituting the explicit constants obtained in \cite[Theorem 2.6]{BanBau} and his Proposition 6.2 in \cite{Li2}, restores the above bounds.

 As before, using Theorem \ref{thmmart} below, we obtain improvements of Li's results.  Once again, rather than listing  all the results explicitly, we give an example.
   \begin{theorem}\label{Li-forms} 
 Let  M be a complete and stochastically complete Riemannian
manifold of nonnegative Weitzenb\"oak curvature.  Then 
 \begin{equation}\label{Li-forms1}
 \|S_k\|_p\leq C_k(p^*-1), \,\,\,\, 1<p<\infty,
 \end{equation}
 where $C_k$ is a constant depending on $k$.  
 \end{theorem}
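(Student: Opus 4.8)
The strategy follows the now-standard blueprint: start from the (corrected) Gundy–Varopoulos-type probabilistic representation of the Beurling–Ahlfors operator $S_k$ on differential forms, as developed by Li \cite{Li3} and patched in \cite{BanBau}. Recall that this representation writes $S_k f$ (paired against a test form $g$) as a conditional expectation of a stochastic integral against the background radiation process, in which the integrand is a matrix-valued process controlled pointwise in operator norm by the Weitzenb\"ock curvature term together with the $d^*d-dd^*$ algebraic structure. The key point, emphasized already in the introduction, is that the corrected representation forces the non-adaptive factor \emph{out} of the stochastic integral; what remains is genuinely a \emph{Schr\"odinger-type martingale transform}: a pair $(Y,X)$ of martingales where $Y$ is differentially subordinate to $X$ but the subordination is perturbed by a term coming from the (nonnegative) curvature potential. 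This is exactly the situation covered by the sharp martingale inequality \eqref{mart_inequality} of Theorem \ref{thmmart}.

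First, I would set up the probabilistic representation precisely, recording the bound $|$integrand$|\le C_k$ in operator norm that comes from the Weitzenb\"ock formula (here the constant depending on $k$ enters, reflecting the algebraic size of the projections onto the $d$- and $d^*$-pieces of the Hodge Laplacian on $k$-forms and the combinatorial factor in the curvature operator on $\Lambda^k$). Because $M$ is stochastically complete, the background radiation process is well defined and the semigroup identities needed to run the representation are valid; nonnegativity of the Weitzenb\"ock curvature guarantees the potential appearing in the martingale transform is of the correct sign, so that Theorem \ref{thmmart} applies with no extra loss. Second, I would invoke \eqref{mart_inequality}: it yields, for the conditionally-expected stochastic integral, an $L^p$ bound of the form $(p^*-1)$ times the controlling constant, \emph{uniformly in the potential} — this is the whole point of the sharp inequality and what distinguishes it from the $O((p^*-1)^{3/2})$ bounds obtained from \cite[Theorem 2.6]{BanBau}. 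Third, a duality/projection argument (conditional expectation is an $L^p$ contraction) transfers the martingale estimate back to $\|S_k f\|_{L^p(M)}\le C_k(p^*-1)\|f\|_{L^p(M)}$, first for $f\in C_0^\infty$ and then by density for all of $L^p$.

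I expect the main obstacle to be purely bookkeeping rather than conceptual: carefully identifying the integrand in the corrected representation and verifying that, after moving the non-adaptive term outside, the resulting pair $(Y,X)$ satisfies precisely the hypotheses of Theorem \ref{thmmart} — in particular that the perturbation term has the sign and the form required, and that the operator-norm control of the matrix-valued integrand collapses to the scalar differential-subordination-with-potential condition. The Weitzenb\"ock curvature being $\ge 0$ is used exactly here. A secondary technical point is the usual one of justifying the representation on a general complete, stochastically complete manifold (integrability of the stochastic integrals, interchange of limits defining $(-\square_k)^{-1/2}$ via the subordination formula $\int_0^\infty P_t\,\frac{dt}{\sqrt{\pi t}}$), but these are handled exactly as in \cite{Li3}, \cite{BanBau}, and no new difficulty arises from the sharp inequality. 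Once the hypotheses of Theorem \ref{thmmart} are checked, the bound \eqref{Li-forms1} is immediate.
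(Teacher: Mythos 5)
Your plan is exactly the route the paper takes: it proves Theorem \ref{Li-forms} by combining Li's probabilistic representation of $S_k$ on forms from \cite{Li3}, corrected as in \cite{BanBau} by moving the non-adaptive curvature factor outside the stochastic integral (so that nonnegative Weitzenb\"ock curvature yields the non-positive process $\mathcal{V}$ required in Theorem \ref{thmmart}), then applying the sharp bound \eqref{mart_inequality} and the $L^p$-contractivity of conditional expectation. In fact the paper only sketches this and explicitly leaves the bookkeeping to the reader, so your proposal fills in the same outline with no substantive deviation.
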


 If we assume that the Weitzenb\"oak  curvature is bounded below by $-a$ for some nonnegative constant $a$, then our inequalities can be used to obtain estimates on the operator $S_k=(d^*\,d-d\,d^*){(a+\square_k)}^{-1}$ as well as Riesz transforms on forms.   We leave these to the interested reader referring to \cite{Li4} and \cite{Li5}. 
   
 Finally, while the results in \cite{CarDra} show the effectiveness of the Bellman function techniques to study the boundedness of  the Riesz transform on manifolds under the Bakry-Emery curvature assumptions, those techniques have not been applied (to the best of our knowledge) to study the Riesz transforms, or the the Beurling-Ahlfors operator, on differential forms on manifolds under  the Weitzenb\"oak  curvature  assumptions.  We believe such approach could produce interesting surprises. 
 
We now turn our attention to weak-type and logarithmic inequalities.  The problem of studying the asymptotic behavior of the $L^p$ bounds of Riesz transforms on manifolds as $p\to 1$ and $p\to\infty$ is attributed to Le Jan;  see \cite[Problem 1]{Li}. On $\R^n$,  the interest in the asymptotic behavior of these constants has a long history, going back to Marcinkiewicz, Zygmund and many others. For example,  see  \cite[Chapter XII]{Zyg}, where it is shown that for sublinear operators with with $L^p$ bounds of the form  $(p^*-1)$ as $p\to 1$ and $p\to\infty$,  one can obtain exponential and $LLogL$ inequalities.  This behavior also points to weak-type $(1, 1)$ inequalities and to $H^1$ and $BMO$ bounds.  The $H^1$ and $BMO$ topics are not explored in this paper.   We do point out, however, that to the best of our knowledge, weak-type $(1,1)$ inequalities for Riesz transforms on general manifolds of nonnegative Ricci curvature are not known. We  believe such inequalities should hold.  In the same way, there are currently no weak-type $(1,1)$ inequalities for the Riesz transforms on Gauss space which hold in infinite dimension. We refer the reader to \cite{Ban}, Remark 3.4.2 and Problem 8, for more information about the problem of weak-type $(1,1)$ behavior for Riesz transforms on Gauss space.

Another problem of considerable interest for the Riesz transforms on $\R^n$ is {\it Problem 7} in \cite{Ban} which asks for the best constant $C_p$ in the weak-type inequality 
\begin{equation}\label{WeakRiesz}
\|R_jf\|_{L^{p, \infty}(\R^n)}=\sup_{\lambda>0}\big(\lambda^p |\{x\in \R^n: |R_jf|>\lambda\}|\big)^{1/p}\leq C_p\|f\|_{L^p(\R^n)}, 
\end{equation}
$1\leq p<\infty$, where $|E|$ denotes the Lebesgue measure of the set $E$. The space $L^{p, \infty}(\R^n)$ consists of all measurable functions $g$ for which the left hand side of \eqref{WeakRiesz} (with $g$ in place of $R_jf$) is finite.  Under a suitable renorming of $L^{p, \infty}(\R^n)$ (see \eqref{renorm} below) replacing the left hand side of \eqref{WeakRiesz}, the case of $1<p<\infty$ is solved by Os\c ekowski in \cite{O2}.  This provides bounds on $C_p$.   The case $p=1$ remains open and it is not even known if $C_1$ has a bound independent of the dimension $n$.   The problem of obtaining a constant $C_1$ independent of dimension goes back to Stein \cite{Ste1, Ste2}.  For the best available bound thus far (which is of order $\log(n)$), we refer the reader to Janakiraman \cite{Jan1}. 

When $n=1$, the problem reduces to obtaining the best weak-type constant for the Hilbert transform (conjugate function) $H$.  In this case it is known that  
\[ \|H\|_{L^p(\R^n)\to L^{p, \infty}(\R^n)} ={\left(\frac{1}{\pi}\int_{-\infty}^\infty \frac{{\left|\frac{2}{\pi}
\log{|t|}\right|}^p}{t^2 + 1} \mbox{d}t\right)}^{-1/p}, \,\,\,\,1\leq p\leq 2.\]
The case $p=1$, where 
\begin{equation}\label{catalan}
D_1=\frac{1+\frac{1}{3^2}+\frac{1}{5^2}+\frac{1}{7^2}+\frac{1}{9^2}+
\cdots}{1-\frac{1}{3^2}+\frac{1}{5^2}-\frac{1}{7^2}+\frac{1}{9^2}-\cdots}=\frac{\pi^2}{8\beta(2)}\approx 1.328434313301,
\end{equation}
with $\beta(2)$  the so called ``Catalan's" constant, is due to B. Davis \cite{D}.  The case $1<p\leq 2$ was studied by Janakiraman \cite{Jan2}.  The case $2<p<\infty$ remains open  even for the Hilbert transform.

Another natural  replacement for the $L^p$-inequalities for the Hilbert transform, singular integrals and Fourier multipliers when $p=1$, are the Zygmund \cite{Zyg} and Stein \cite{St} $LLogL$ inequalities.   Given that the Riesz transforms (and many other multipliers arising from projections of martingale transforms such as all those studied in the literature cited in the first paragraph above) are bounded in $L^p$  with constants which are $O(p)$, as $p\to\infty$, and $O(1/(p-1))$ as $p\to 1$, the classical argument of Zygmund \cite[Chapter XII]{Zyg1}  (see also \cite[p.~44]{Gra}) gives  that these operators have local $LLogL$ inequalities.  However, those general arguments do not provide very precise information on these constants. 

The literature on both weak-type inequalities and $LlogL$ inequalities is very large and in addition to to the work of Davis \cite{D} and  Janakiraman \cite{Jan2} on sharp weak-type inequalities we mention here the work of  Bennett \cite{Ben}, Aar\~ao and Jorge \cite{AO}, Laeng \cite{L}, Os\c ekowski \cite{O1, O2} and  Pichorides \cite{P}. Most relevant to our results here are the logarithmic and weak-type estimates established in Os\c ekowski \cite{O1,O2}, which motivate our next  results in this paper. For the rest of  the paper, $\Phi$, $\Psi$ denote the Young functions on $[0,\infty)$, given by the formulas
\begin{equation}\label{defPP}
 \Phi(t)=e^t-1-t,\qquad \Psi(t)=(t+1)\log(t+1)-t.
\end{equation}
These functions are conjugate to each other, in the sense that $\Phi'=(\Psi')^{-1}$. Next, for any $K>2/\pi$, define
\begin{equation}\label{defL}
L(K)=\frac{K}{\pi}\int_\R \frac{\Phi\left(\left|\frac{2}{\pi K}\log|t|\right|\right)}{t^2+1}\mbox{d}t.
\end{equation}
Furthermore, if $1<p<\infty$ and  $q=p/(p-1)$ is the conjugate exponent of  $p$, put
\begin{equation}\label{defCp}
 C_p=\begin{cases}
\displaystyle \left[\frac{2^{q+2}\Gamma(q+1)}{\pi^{q+1}}\sum_{k=0}^\infty \frac{(-1)^k}{(2k+1)^{q+1}}\right]^{1/q}& \mbox{if }1<p<2,\\ 
\displaystyle \left[\frac{2^{q+2}\Gamma(q+1)}{\pi^{q}}\sum_{k=0}^\infty \frac{1}{(2k+1)^{q}}\right]^{1/q},& \mbox{if }2\leq p<\infty.
\end{cases}
\end{equation}
This constant can be written as 
$$
C_p=\frac{2}{\pi}\left[\frac{4}{\pi}\,\Gamma(q+1)\beta(q+1)\right]^{1/q},  \,\,\,\, 1\leq p<2, 
$$
where $\beta(q)$ is the Dirichlet beta function ($\beta(2)$ is the Catalan's constant in \eqref{catalan}) and 
$$
C_p= \left[\pi^{-q}(2^{q+1}-2)\Gamma(q+1)\zeta(q)\right]^{1/q}, \,\,\,\, 2\leq p<\infty,
$$
where $\zeta(q)$ is the Riemann zeta function.

For $f:\R^n\to \R$, let
\begin{equation}\label{renorm}
|||f|||_{L^{p,\infty}(\R^n)}=\sup\left\{|A|^{-1+1/p }\int_A |f|\mbox{d}x\;:\; A\in \mathcal{B}(\R^n),\,0<|A|<\infty\right\}
\end{equation}
denote the weak $p$-th norm of $f$, $1<p<\infty$. See Grafakos \cite[Chapter I]{Gra} for many properties of this norm and its connections  to the quantity on the left hand side of \eqref{WeakRiesz}.  In particular, note that with $A=\{x\in \R^n: |f(x)|>\lambda\}$, we immediately obtain that $\|f\|_{L^{p,\infty}(\R^n)}\leq |||f|||_{L^{p,\infty}(\R^n)}$.  

The principal $LLogL$ and weak-type results in \cite{O1} and \cite{O2} are the following: 
Let $n$ be a fixed positive integer and let $j\in \{1,\,2,\,\ldots,\,n\}$.
\begin{itemize}
\item[(i)] For any $K>2/\pi$ and any $f:\R^n\to \R$ with $\int_{\R^n} \Psi(|f|)<\infty$ we have
\begin{equation}\label{add1}
 \int_A |R_jf(x)|\mbox{d}x\leq K\int_{\R^n}\Psi(|f(x)|)\mbox{d}x+L(K)\cdot|A|.
\end{equation}
\item[(ii)] We have
\begin{equation}\label{add2}
 |||R_jf|||_{L^{p,\infty}(\R^n)}\leq C_p||f||_{L^p(\R^n)},\qquad 1<p<\infty.
\end{equation}
\end{itemize}

Both inequalities here are sharp.  The inequality  \eqref{add1} should be compared with the results in Pichorides \cite{P} for the Hilbert transform (conjugate function) on $\mathbb{S}^1$. 
In this paper we extend the logarithmic inequality \eqref{add1} and the weak-type inequality \eqref{add2}  to:
\bigskip

\begin{enumerate}
\item[(1)] Riesz transforms on Lie groups. The new results are Theorems  \ref{vR}, \ref{group} 
and \ref{group1}. 
\bigskip

\item[(2)] Riesz transforms on spheres  in $\R^n$. The new results are Theorems \ref{sphere1} and  \ref{sphere2}. 
\bigskip

\item[(3)] Riesz transforms on Gauss space. The new result is Theorem \ref{gauss1}. 
\end{enumerate} 
\bigskip 
Our proofs  rest on the probabilistic approach using differentially subordinate martingales which has been employed very effectively elsewhere (\cite{BO}, \cite{BW}, \cite{GV}, \cite{O1} and \cite{O2}, to cite but a few references) for similar problems. For Theorem \ref{manifolds2}, we follow the argument of \cite{BanBau} and apply Theorem \ref{thmmart} in place of Theorem 2.5 from that paper.  For Theorem \ref{manifolds1}, we simply use the probabilistic representation for the Riesz transforms given in \cite[Theorem 3.2]{Li}) with the corrected modification pointed out in \cite{BanBau} (as already discussed above) and again apply the new inequality \eqref{mart_inequality} of Theorem \ref{thmmart}.  (See also \cite{Li4} where Li elaborated further on the corrections.)  The same applies to Theorem \ref{Li-forms}.   Since these details amount to setting up the notation to apply Theorem \ref{thmmart}, we leave this to the reader.  For our results on Lie groups, spheres and Gauss space, 
we follow the presentation of Arcozzi \cite{Ar}.  
Once the inequalities are obtained on spheres, using Poincar\'e's observation that the Gaussian measure is obtained from the surface measure of the sphere by a limiting argument, we will deduce the corresponding bounds for Riesz transforms associated with the Ornstein-Uhlenbeck semigroup on Gauss space. This approach is very ``hands on" and conceptually interesting requiring several explicit computations.

The rest of the paper is organized as follows. In the next section, \S\ref{prob},  we present several sharp new inequalities for martingales which are the key to our applications. In \S \ref{manifoldtransforms} we derive logarithmic and weak-type inequalities for martingale transforms on manifolds. The next three sections, \S\ref{LieRiesz}, \S\ref{SphereRiesz} and \S\ref{GaussRiesz}, are devoted to the study of logarithmic and weak-type inequalities for Riesz transforms on compact Lie groups, spheres and Gauss space.

\section{New sharp $L^p$, logarithmic, and weak-type martingale inequalities}\label{prob}As announced above, our approach depends heavily on martingale methods. The purpose of this section is to introduce the appropriate machinery. For the sake of convenience, we have decided to split this section into four parts.

We begin with the necessary probabilistic background. Assume that $(\Omega,\mathcal{F},\mathbb{P})$ is a complete probability space, equipped with $(\F_t)_{t\geq 0}$, a nondecreasing family of sub-$\sigma$-fields of $\F$, such that $\F_0$ contains all the events of probability $0$. Let $X$, $Y$ be two adapted martingales taking values in $\R^n$. As usual, we assume that the processes have right-continuous trajectories with the limits from the left, i.e., {\cadlag}.  The symbol $[X,Y]$ denotes the quadratic covariance process of $X$ and $Y$; consult e.g. Dellacherie and Meyer \cite{DM} for details in the one-dimensional case, and extend the definition to the vector setting by $[X,Y]=\sum_{k=1}^n [X^k,Y^k]$, where $X^k$, $Y^k$ are the $k$-th coordinates of $X$, $Y$, respectively. Following Ba\~nuelos and Wang \cite{BW} and Wang \cite{W}, we say that $Y$ is \emph{differentially subordinate} to $X$, if $|Y_0| \leq |X_0|$ and the process $([X,X]_t-[Y,Y]_t)_{t\geq 0}$ is nonnegative and nondecreasing as a function of $t$. This definition of differential subordination is slightly more general that the original definition given by Burkholder, see for example \cite{Bu3}. In addition, we say that martingales $X$, $Y$ are \emph{orthogonal}, if $d[X^i,Y^j]=0$ (i.e., the process $[X^i,Y^j]$ is constant) for all $i,\,j$. 
We note that when the martingales have continuous paths, it is customary to write  $\langle X, Y\rangle$ for $[X,Y]$ and $\langle X\rangle$ for $[X,X]$. To be consistent  in our notation, we will simply write $[X, Y]$ and $[X, X]$ for both continuous and {\cadlag} martingales.  In addition, unless it is explicitly stated, our martingales are only assumed to be 
{\cadlag}. 

An  important example of differentially subordinated martingales with continuous paths arises as follows.  Let $B$ be a $n$-dimensional Brownian motion and $H$, $K$ two predictable processes with values in $\R^n$ such that $|K_t|\leq |H_t|$ for all $t\geq 0$.  If we define $X$, $Y$ by the stochastic integrals
$$ X_t=\int_{0+}^t H_s\cdot\mbox{d} B_s,\qquad Y_t=\int_{0+}^t K_s\cdot\mbox{d}B_s,\qquad t\geq 0$$ then 
$Y$ is differentially subordinate to $X$. If, in addition, we have $ H_t\cdot K_t=0$ for all $t$, then both processes are orthogonal. These facts follow immediately from the identities
$$ [X,Y]_t=\int_{0+}^t H_s\cdot K_s\, ds\qquad \mbox{and}\qquad  [X, X]_t- [Y, Y]_t=\int_{0+}^t |H_s|^2-|K_s|^2\mbox{d}s.$$

The differential subordination implies many interesting inequalities involving the martingales $X$, $Y$. The literature on the subject is very large, we refer the interested reader to the survey \cite{Bu3} by Burkholder or the monograph \cite{O0} by the second-named author. We will only focus on a few results which will be important for us in our further considerations in this paper. 

To study the estimates for the vector Riesz transforms, one needs good bounds for differentially subordinated martingales (without the orthogonality property). For instance, to establish the $L^p$-bound
$||R^{\R^n}f||_{L^p(\R^n)}\leq 2(p^*-1)||f||_{L^p(\R^n)}$
and 
$||R^{G}f||_{L^p(G)}\leq 2(p^*-1)||f||_{L^p(G)}$
for $1<p<\infty$ for Riesz transforms on $\R^n$ and on Lie groups, Ba\~nuelos and Wang \cite{BW} and Arcozzi \cite{Ar} exploited the celebrated inequalities  of Burkholder \cite{Bu1,Bu2} (see also Wang \cite{W}).  
\begin{theorem}\label{martingale_thm?}
If $X$, $Y$ are two $\R^n$-valued martingales such that $Y$ is differentially subordinate to $X$, then 
\begin{equation}\label{martingaleIn_thm?}
||Y||_p\leq (p^*-1)||X||_p,\qquad 1<p< \infty,
\end{equation}
and the constant is the best possible.
\end{theorem}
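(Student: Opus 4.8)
The plan is to use the Burkholder method, i.e., to find a special function $U\colon\R^n\times\R^n\to\R$ which dominates the ``payoff'' $|y|^p-(p^*-1)^p|x|^p$ and which is superharmonic in a suitable sense along differentially subordinate pairs. Concretely, I would first establish the result in the case $p\ge 2$ and then obtain the range $1<p<2$ by a duality argument. For $p\ge 2$, set
\begin{equation*}
U_p(x,y)=\alpha_p\bigl(|y|-(p^*-1)|x|\bigr)\bigl(|x|+|y|\bigr)^{p-1},
\end{equation*}
with $\alpha_p=p\bigl(1-1/p^*\bigr)^{p-1}=p(1-1/p)^{p-1}$, the function Burkholder introduced in \cite{Bu1}. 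The first step is to record the two key properties of $U_p$: (a) the majorization $U_p(x,y)\ge |y|^p-(p^*-1)^p|x|^p$ for all $x,y\in\R^n$, which reduces to a one-variable inequality after writing $|x|=r$, $|y|=s$; and (b) the concavity-type estimate: for fixed $(x,y)$ and any $h,k\in\R^n$ with $|k|\le |h|$, the function $t\mapsto U_p(x+th,y+tk)$ has nonpositive second derivative at $t=0$ (away from the diagonal $|x|=|y|$ where $U_p$ fails to be $C^2$ but is still concave along such directions in the appropriate one-sided sense). Property (b) is most cleanly phrased as: the Hessian of $U_p$, as a quadratic form in $(h,k)$, is nonpositive on the cone $\{|k|\le|h|\}$.

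The second step is the stochastic/analytic passage. Using that $Y$ is differentially subordinate to $X$, one applies the It\^o--Doob formula to $U_p(X_t,Y_t)$. The drift term splits into a part coming from the continuous martingale brackets — which is controlled pointwise by property (b) together with the differential subordination $d[Y,Y]\le d[X,X]$ (in the sense of the measure $d([X,X]-[Y,Y])$ being nonnegative and nondecreasing) — and a part coming from the jumps, which is handled by the pointwise inequality $U_p(x+h,y+k)\le U_p(x,y)+\langle\nabla U_p(x,y),(h,k)\rangle$ valid whenever $|k|\le|h|$ (a consequence of the same concavity in integrated form). Hence $(U_p(X_t,Y_t))_{t\ge0}$ is a supermartingale, and combined with $|Y_0|\le|X_0|$ — which gives $U_p(X_0,Y_0)\le 0$ by the case $|x|=|y|$ of the majorization after a short computation, actually $U_p(x,y)\le 0$ when $|y|\le|x|$ — we get $\E\,U_p(X_t,Y_t)\le 0$. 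Feeding this back into the majorization (a) yields $\E|Y_t|^p\le(p^*-1)^p\,\E|X_t|^p$, and letting $t\to\infty$ (with the usual truncation/localization to justify integrability, and monotone convergence) gives $\|Y\|_p\le(p^*-1)\|X\|_p$. The technical care needed here — localizing the local martingale terms, ensuring $U_p(X_t,Y_t)$ is genuinely integrable, and handling the non-$C^2$ behaviour on the diagonal by mollification — is routine but must be mentioned.

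For $1<p<2$ I would pass to the conjugate exponent $q=p/(p-1)\ge 2$. The standard device is: given $\R^n$-valued $X,Y$ with $Y$ differentially subordinate to $X$, and an arbitrary $\R^n$-valued martingale $Z$ with $\|Z\|_q\le1$, estimate $|\E\langle Y_\infty,Z_\infty\rangle|$ by writing it via the bracket $[Y,Z]$ and exploiting that the pair $(X\pm Z, Y\pm Z)$ — or rather a suitable rotation/combination — again satisfies a differential subordination relation, then invoking the already-established case $q\ge 2$. Taking the supremum over such $Z$ recovers $\|Y\|_p$ on the left and $(q^*-1)\|X\|_p=(p^*-1)\|X\|_p$ on the right, since $q^*=q=p^*$ in this range. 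Sharpness of the constant $p^*-1$ is inherited from the classical one-dimensional (discrete-time, $n=1$) examples of Burkholder \cite{Bu1,Bu2}: a dyadic martingale transform attaining the constant shows no smaller constant works, and this is already covered by the cited references, so I would only remark on it.

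The main obstacle is the verification of property (b), the concavity of $U_p$ along the cone $|k|\le|h|$ in the vector-valued setting: in dimension one this is Burkholder's original computation, but for $n>1$ one must check that the relevant Hessian quadratic form, which now involves the radial and angular second derivatives of $U_p(|x|,|y|)$ and cross terms between the directions $x/|x|$, $y/|y|$, $h$, $k$, is nonpositive on $\{|k|\le|h|\}$. The cleanest route is to reduce to the two-dimensional span issue by noting $U_p$ depends only on $|x|,|y|$ and the problem is invariant under simultaneous rotations, so it suffices to test the form on vectors $h,k$ lying in the plane spanned by $x,y$ (plus a one-dimensional transversal contribution that is easily seen to be favourable); this reduces everything to the scalar estimate already known. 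Documenting this reduction carefully — and the matching non-smooth patch on the diagonal — is where the real work lies; everything else is bookkeeping with It\^o's formula.
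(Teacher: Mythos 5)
Your $p\ge 2$ half is essentially the standard Burkholder--Wang argument (majorization of $|y|^p-(p-1)^p|x|^p$, nonpositivity of the Hessian form on the cone $\{|k|\le|h|\}$, It\^o's formula with localization and a mollification to handle the non-smooth set), and it is sound; for the record, the paper does not reprove this theorem at all --- it quotes it from Burkholder \cite{Bu1,Bu2} and Wang \cite{W} --- but the same machinery is exactly what the paper deploys in the $p\ge 2$ part of its extension, Theorem \ref{thmmart}. Your remark on sharpness (deferring to the classical examples) is also fine, since that is how the paper treats it.

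The genuine gap is your treatment of $1<p<2$ by duality. Under mere differential subordination there is no way to transfer the subordination onto the test martingale: for an arbitrary $Z$ with $||Z||_q\le 1$, the pairs $(X\pm Z,\,Y\pm Z)$ (or rotations/combinations of them) need not satisfy any differential subordination relation, because $d[Y\pm Z,Y\pm Z]-d[X\pm Z,X\pm Z]=d[Y,Y]-d[X,X]\pm 2\,d[Y-X,Z]$, and the cross bracket $d[Y-X,Z]$ has no sign or size control. The duality trick you are recalling works for martingale transforms, where $Y=\int H\,dX$ with a predictable multiplier $H$ that can be moved onto $Z$ via $\E\langle Y_\infty,Z_\infty\rangle=\E\langle X_\infty,(H\cdot Z)_\infty\rangle$; differential subordination supplies no such multiplier, and estimating $\E\langle Y_\infty,Z_\infty\rangle=\E\,[Y,Z]_\infty+\E\langle Y_0,Z_0\rangle$ through Kunita--Watanabe only produces square-function bounds with non-sharp constants. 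The repair is to run the special-function argument in the range $1<p<2$ as well: the same functional form works, namely $U_p(x,y)=p^{2-p}\bigl(|y|-(p-1)^{-1}|x|\bigr)\bigl(|x|+|y|\bigr)^{p-1}$ with the majorization $|y|^p-(p-1)^{-p}|x|^p\le U_p(x,y)$ (Burkholder \cite{Bu2,Bu3}, Wang \cite{W}, Ba\~nuelos--Wang \cite{BW}); this is precisely what the paper does for its Theorem \ref{thmmart} in this range, obtaining $U_p$ by integrating the elementary function $u_r$ of \eqref{defur} against $r^{p-1}\,\mbox{d}r$, which at the same time sidesteps the regularity issues you flag. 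With that substitution your outline becomes a complete proof; as written, the $1<p<2$ case does not go through.
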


In order to apply martingale inequalities to manifolds and obtain results as in Theorem \ref{manifolds1}, we will prove here the following extension of Burkholder's theorem. 

\begin{theorem}\label{thmmart} Let $X$ and $Y$ be $\R^n$-valued martingales with continuous paths such that $Y$ is differentially subordinate to $X$.
Consider 
the solution of the matrix equation
$$\mbox{d}\mathcal{M}_t = \mathcal{V}_t\mathcal{M}_t\mbox{d}t,\qquad  \mathcal{M}_0 = \mbox{Id},$$
where $(\mathcal{V}_t)_{t\geq 0}$ is an adapted and continuous process taking values in the set of symmetric and 
non-positive $n\times n$ matrices. For a given $a\geq 0$, consider the process 
$$Z_t =e^{-at}\mathcal{M}_t\int_0^t e^{as}\mathcal{M}_s^{-1}\mbox{d}Y_s.$$
Then for any $1<p<\infty$ and $T\geq 0$ we have the sharp bound
\begin{equation}\label{mart_inequality}
||Z_T||_p\leq (p^*-1)||X_T||_p,
\end{equation}
\end{theorem}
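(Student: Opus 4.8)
The plan is to reduce the estimate \eqref{mart_inequality} to the classical Burkholder inequality \eqref{martingaleIn_thm?} from Theorem \ref{martingale_thm?} by establishing a pointwise-in-time differential subordination of $Z$ to $Y$, and hence to $X$. First I would observe that since $X$, $Y$ have continuous paths, so does $Z$, and $Z_0 = 0$, so the initial condition $|Z_0|\leq |X_0|$ is automatic. The main work is to show that $([X,X]_t - [Z,Z]_t)_{t\geq 0}$ is nonnegative and nondecreasing. Because $Y$ is differentially subordinate to $X$, it suffices to prove that $[Z,Z]_t - [Y,Y]_t$ is nonincreasing, i.e. that $\mbox{d}[Z,Z]_t \leq \mbox{d}[Y,Y]_t$ in the sense of the ordering of nondecreasing processes. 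The key computational step is to differentiate $Z$: writing $N_t = \int_0^t e^{as}\mathcal{M}_s^{-1}\mbox{d}Y_s$, which is a continuous local martingale with $\mbox{d}[N,N]_t = e^{2at}\mathcal{M}_t^{-1}\,\mbox{d}[Y,Y]_t\,(\mathcal{M}_t^{-1})^{\mathsf T}$, and $Z_t = e^{-at}\mathcal{M}_t N_t$, the product/Itô rule gives
\begin{equation*}
\mbox{d}Z_t = e^{-at}\mathcal{M}_t\,\mbox{d}N_t + \left(\mathcal{V}_t - a\,\mathrm{Id}\right)Z_t\,\mbox{d}t,
\end{equation*}
so the martingale part of $Z$ has differential $e^{-at}\mathcal{M}_t\,\mbox{d}N_t = \mathcal{M}_t\mathcal{M}_t^{-1}\,\mbox{d}Y_t = \mbox{d}Y_t$ — in other words $Z_t - Y_t$ is a process of bounded variation (the drift $\int_0^t(\mathcal{V}_s - a\,\mathrm{Id})Z_s\,\mbox{d}s$). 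Consequently $[Z,Z]_t = [Y,Y]_t$ exactly, since the quadratic variation only sees the martingale part. This gives the required differential subordination of $Z$ to $X$ for free, and then \eqref{mart_inequality} is immediate from Theorem \ref{martingale_thm?}.

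There is one subtlety I would need to address carefully: in the above I used that the finite-variation drift does not contribute to $[Z,Z]$, which is standard, but one should also check that $Z$ is genuinely a martingale (not merely a semimartingale) so that Theorem \ref{martingale_thm?} applies, or alternatively note that the quoted Burkholder inequality and its proof via the Burkholder function extend to the Itô-process setting where $Z$ is a semimartingale whose martingale part is differentially subordinate to $X$ — this is in fact the form in which the inequality is used in \cite{BW} and \cite{Ar}. Since $Z$ here is not a martingale (it has a drift), the cleanest route is to invoke the semimartingale version: if $\mbox{d}Z_t = \mbox{d}Y_t + (\text{finite variation})$ with $Y$ differentially subordinate to $X$, then $\|Z_T\|_p \leq (p^*-1)\|X_T\|_p$ provided the drift term is "absorbed correctly." Here the structure $\mathcal{V}_t - a\,\mathrm{Id}$ being symmetric and non-positive (since $\mathcal{V}_t$ is symmetric non-positive and $a\geq 0$) is exactly what is needed: applying Itô's formula to $U(X_t, Z_t)$ for Burkholder's function $U$ and using that $\nabla_y U(x,y)\cdot (\mathcal{V}_t - a\,\mathrm{Id})y \leq 0$ by the known structural properties of $U$ (it is such that $y\mapsto U(x,y)$ behaves like $-(p^*-1)^p|x|^p + |y|^p$-type majorization and $\langle \nabla_y U(x,y), y\rangle$ has the right sign), the drift contributes a non-positive term to the expectation.

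So the key steps, in order, are: (1) introduce $N_t = \int_0^t e^{as}\mathcal{M}_s^{-1}\,\mbox{d}Y_s$ and compute $\mbox{d}[N,N]_t$; (2) apply Itô's product formula to $Z_t = e^{-at}\mathcal{M}_t N_t$, using $\mbox{d}\mathcal{M}_t = \mathcal{V}_t\mathcal{M}_t\,\mbox{d}t$ to identify the drift as $(\mathcal{V}_t - a\,\mathrm{Id})Z_t\,\mbox{d}t$ and the martingale part as $\mbox{d}Y_t$; (3) apply Itô's formula to $U(X_t, Z_t)$ with $U$ the Burkholder function associated with the $L^p$ estimate; (4) handle the three groups of terms — the $U(X_0, Z_0) = U(X_0, 0) \leq 0$ boundary term (using $|Z_0| = 0 \leq |X_0|$), the second-order terms which are non-positive by the concavity/diagonal properties of $U$ together with the differential subordination $[Z,Z] = [Y,Y] \leq [X,X]$ inherited from $Y\prec X$, and the new drift term $\langle \nabla_y U(X_t, Z_t), (\mathcal{V}_t - a\,\mathrm{Id})Z_t\rangle\,\mbox{d}t$; (5) conclude that $\mathbb{E}\,U(X_T, Z_T)\leq 0$, which combined with the minorization $U(x,y)\geq |y|^p - (p^*-1)^p|x|^p$ (for $p\geq 2$; with the appropriate constant and normalization for $1<p<2$) yields \eqref{mart_inequality}; (6) remark that sharpness follows by taking $\mathcal{V}\equiv 0$, $a = 0$, which reduces $Z$ to $Y$ and recovers the sharp example already known for Theorem \ref{martingale_thm?}.

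The main obstacle I anticipate is step (4), specifically verifying that the extra drift term $\langle \nabla_y U(X_t,Z_t), (\mathcal{V}_t - a\,\mathrm{Id})Z_t\rangle$ has the correct (non-positive) sign. This requires knowing the relevant property of Burkholder's function $U$ — essentially that $\langle \nabla_y U(x,y), Ay\rangle \leq 0$ whenever $A$ is symmetric non-positive, which should reduce to a statement about $U$ being such that its $y$-gradient points "inward" relative to $y$ in an appropriate sense (for instance, that $U(x,y)$ depends on $y$ in a way compatible with $\langle \nabla_y U(x,y), y\rangle$ having a definite sign on the relevant region, combined with diagonalizing the symmetric matrix $A$). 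I expect this is precisely the structural lemma that the martingale inequality \eqref{mart_inequality} is designed around, and it is the place where the hypotheses "$\mathcal{V}_t$ symmetric and non-positive" and "$a\geq 0$" are genuinely used; the rest of the argument is routine stochastic calculus and the already-established Theorem \ref{martingale_thm?}.
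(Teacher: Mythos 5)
Your overall architecture is the same as the paper's: compute $\mbox{d}Z_t=\mbox{d}Y_t+(\mathcal{V}_t-a\,\mathrm{Id})Z_t\,\mbox{d}t$ (equivalently, reduce to $a=0$ by absorbing $-a\,\mathrm{Id}$ into $\mathcal{V}$), apply It\^o's formula to $U(X_t,Z_t)$ for a Burkholder-type special function, use $[Z,Z]=[Y,Y]\leq[X,X]$ for the second-order terms, control the new drift term by a sign property of $U_y$, and get sharpness from $\mathcal{V}\equiv 0$. (Your opening claim that $[Z,Z]=[Y,Y]$ makes the bound ``immediate from Theorem \ref{martingale_thm?}'' is false since $Z$ is not a martingale, but you retract it yourself.) The genuine gap is exactly the step you defer as ``the main obstacle'': you need $\langle U_y(x,y),Ay\rangle\leq 0$ for every symmetric non-positive $A$, which, after diagonalizing $A$, forces $U_y(x,y)$ to be a \emph{nonnegative} multiple of $y$. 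This is not a property you may simply ``expect'': the paper points out explicitly that the standard Burkholder function $\tilde{U}_p$ of \eqref{Up} --- the function you propose to use --- fails it for $p\geq 2$, since the corresponding factor $\alpha(x,y)$ can be negative. So the argument as sketched breaks down at your step (4) precisely in the range $p\geq 2$, and no amount of ``routine stochastic calculus'' repairs it without changing the special function.

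What the paper actually does is twofold. For $p\geq 2$ it replaces $\tilde{U}_p$ by the modified function which coincides with $\tilde{U}_p$ on $\{|y|\geq(p-1)|x|\}$ and equals $|y|^p-(p-1)^p|x|^p$ on $\{|y|<(p-1)|x|\}$; this glued function is still $C^1$, still majorizes $|y|^p-(p-1)^p|x|^p$, still satisfies Burkholder's concavity-type inequality, and now has $U_{py}(x,y)=\alpha(x,y)y$ with $\alpha\geq 0$, which is what makes the drift term $\langle U_y(X_s,Z_s),\mathcal{V}_sZ_s\rangle$ non-positive (a mollification of the merely $C^1$ function, plus localization by stopping times, is also needed --- technical points your sketch omits). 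For $1<p<2$ the paper sidesteps the question entirely: it runs the It\^o argument at the level of the elementary functions $u_r$ of \eqref{defur}, where the drift contribution is simply $2\langle Z_s,\mathcal{V}_sZ_s\rangle\leq 0$, and then integrates over $r$ to recover Burkholder's function and the majorization. In short, your outline identifies the right structure and the right place where the hypotheses on $\mathcal{V}$ enter, but the key lemma it rests on is unproved and is false for the candidate function you name; supplying the modified function (or the integration argument) is the actual content of the proof of \eqref{mart_inequality}.
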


This theorem is motivated by Theorems 2.5 and 2.6 in \cite{BanBau}, which concern slightly different type of estimates involving the square brackets of appropriate martingales. For the sake of completeness, we will establish below sharp versions of those theorems as well (though we will not need them in our study of Riesz transforms - however, the results are interesting on their own right). We need some notation. 

For $0<p<\infty$, let $A_p$, $D_p$ be the best constants in the following inequalities for the stopped Brownian motion: for any $\tau\in L^{p/2}$,
\begin{equation}\label{defAp}
 \left\|\sup_{0\leq s\leq \tau}|B_{\tau}|\right\|_p\leq A_p\|\tau^{1/2}\|_p
\end{equation}
and
\begin{equation}\label{Davis}
\|B_{\tau}\|_p\leq D_p\|\tau^{1/2}\|_p,\,\,\,\, 0<p<\infty.
\end{equation} 

We will prove the following statements. 

\begin{theorem}\label{potential-nonmax} 
Let $Y$ be an $\R^n$-valued martingale with continuous paths and let $a$, $\mathcal{M}_t$ and $Z_t$ be as in the statement of Theorem \ref{thmmart}. 
 Then for any $0<p<\infty$ and $T\geq 0$,  we have 
\begin{equation}\label{nonmax}
\|Z_T\|_p\leq D_p\|[Y,Y]_T^{1/2}\|_p.
\end{equation}
The estimate \eqref{nonmax} is sharp, as it is already sharp in the case $a=0$ and $\mathcal{V}\equiv 0$. 
\end{theorem}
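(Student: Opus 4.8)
The plan is to reduce the estimate \eqref{nonmax} to Davis's inequality \eqref{Davis} for Brownian motion by exhibiting $Z_T$ as a time-changed (stopped) Brownian motion whose clock is dominated by $[Y,Y]_T$. First I would observe that $Z$ is a continuous local martingale: indeed, from the definition $Z_t = e^{-at}\mathcal{M}_t\int_0^t e^{as}\mathcal{M}_s^{-1}\mbox{d}Y_s$, an application of It\^o's formula together with the ODE $\mbox{d}\mathcal{M}_t = \mathcal{V}_t\mathcal{M}_t\mbox{d}t$ yields
\begin{equation}\label{Zdiff}
\mbox{d}Z_t = -(a\,\mathrm{Id} - \mathcal{V}_t)Z_t\,\mbox{d}t + \mbox{d}Y_t,
\end{equation}
because the process $\mathcal{M}_t$ has finite variation, so there is no It\^o correction term; the drift part $-(a\,\mathrm{Id}-\mathcal{V}_t)Z_t\,\mbox{d}t$ has finite variation and does not contribute to the quadratic variation. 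Hence $[Z,Z]_t = [Y,Y]_t$ coordinate by coordinate (and in the sum), so in particular $[Z,Z]_T = [Y,Y]_T$. This identity is the crux: it says the ``OU-type'' modification built from $\mathcal{M}$ and the discount factor $e^{-at}$ changes only the drift of $Y$, never its martingale part.

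Next I would pass from the vector-valued local martingale $Z$ to a one-dimensional one by the standard device: for each fixed unit vector (or, more efficiently, by the dimension-free trick of running the argument on $|Z|$ directly via the Dambis--Dubins--Schwarz theorem applied to each coordinate and summing), represent $Z_t = W_{[Z,Z]_t}$ where $W$ is an $\R^n$-valued Brownian motion (after the usual enlargement of the probability space to accommodate the case where $[Z,Z]_\infty<\infty$). Then
\begin{equation*}
\|Z_T\|_p = \|W_{[Z,Z]_T}\|_p = \|W_{[Y,Y]_T}\|_p \le D_p\,\big\|[Y,Y]_T^{1/2}\big\|_p,
\end{equation*}
where the last step is exactly \eqref{Davis} applied with the stopping time $\tau = [Y,Y]_T$ (which lies in $L^{p/2}$ precisely when the right-hand side is finite, so there is nothing to prove otherwise). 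Care is needed to make DDS work in the vector setting and to handle integrability and the localization, but these are routine; one can also truncate with a localizing sequence of stopping times and pass to the limit by Fatou on the left and monotone convergence on the right.

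For sharpness, I would simply specialize to $a=0$ and $\mathcal{V}\equiv 0$, in which case $\mathcal{M}_t \equiv \mathrm{Id}$ and $Z_t = Y_t$, so \eqref{nonmax} becomes $\|Y_T\|_p \le D_p\|[Y,Y]_T^{1/2}\|_p$; taking $Y$ itself to be a time-changed Brownian motion stopped at an arbitrary $\tau$, the bound reduces to \eqref{Davis}, whose constant $D_p$ is by definition optimal. The one genuine obstacle I anticipate is the justification of \eqref{Zdiff}, i.e. verifying cleanly that the finite-variation process $\mathcal{M}_t$ contributes no bracket term and that $Z$ is a bona fide (local) martingale under the stated minimal hypotheses on $\mathcal{V}$ (adapted, continuous, symmetric, non-positive) rather than e.g. boundedness — here the non-positivity of $\mathcal{V}$ and $a\ge 0$ guarantee that $\|\mathcal{M}_t^{-1}\|$ does not blow up (since $\mathcal{M}_t^{-1}$ solves $\mbox{d}(\mathcal{M}_t^{-1}) = -\mathcal{M}_t^{-1}\mathcal{V}_t\,\mbox{d}t$ and $\|\mathcal{M}_t^{-1}\|$ is nonincreasing), so the stochastic integral defining $Z$ is well posed; everything else is an application of It\^o's product/integration-by-parts formula.
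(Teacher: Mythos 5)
There is a genuine gap at the heart of your argument: $Z$ is \emph{not} a local martingale, and the Dambis--Dubins--Schwarz step therefore does not apply to it. Your own computation shows this: $\mbox{d}Z_t=(\mathcal{V}_t-a\,\mathrm{Id})Z_t\,\mbox{d}t+\mbox{d}Y_t$, so $Z$ is a semimartingale with a nonvanishing (mean-reverting) drift. The identity $[Z,Z]=[Y,Y]$ is correct, but quadratic variation does not determine the terminal distribution of a process with drift, so the asserted equality $\|Z_T\|_p=\|W_{[Y,Y]_T}\|_p$ is false in general; you cannot write $Z_t=W_{[Z,Z]_t}$ for a Brownian motion $W$. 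The entire content of the theorem is precisely to show that the drift term, thanks to $\mathcal{V}\leq 0$ and $a\geq 0$, can only pull $Z$ toward the origin and hence cannot worsen the Davis bound \eqref{Davis}; an argument that silently treats $Z$ as a martingale skips the only nontrivial point. The paper handles this with a Bellman-function argument: it uses Davis's and Wang's special functions $U_p(x,t)$ (built from confluent hypergeometric functions for $0<p\leq 2$ and parabolic cylinder functions for $p\geq 2$) satisfying $|x|^p-D_p^pt^{p/2}\leq U_p(x,t)$ and the key radial monotonicity $U_{px}(x,t)=\alpha(x,t)x$ with $\alpha\geq 0$; applying It\^o's formula to $U_p(Z_t,[Z,Z]_t)$, the drift contributes $\int\langle U_{px}(Z_s,\cdot),\mathcal{V}_sZ_s\rangle\,\mbox{d}s\leq 0$ (up to a mollification error), and the second-order terms are handled by the concavity-type inequality $\tfrac12\langle hU_{pxx},h\rangle+U_{pt}|h|^2\leq 0$. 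If you want to salvage a reduction in your spirit, you would need a genuine comparison/domination argument showing $|Z_T|$ is stochastically dominated by a time-changed Brownian motion run to time $[Y,Y]_T$; no such pathwise or distributional domination is available off the shelf, which is why the special-function machinery is used.

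A secondary problem: even in the drift-free case $Z=Y$, your vector-valued DDS step (``applied to each coordinate and summing'') does not produce an $\R^n$-valued Brownian motion time-changed by a single clock unless the coordinate brackets coincide and the cross-brackets vanish. The inequality $\|Y_T\|_p\leq D_p\|[Y,Y]_T^{1/2}\|_p$ for general $\R^n$-valued continuous martingales with $[Y,Y]=\sum_k[Y^k,Y^k]$ is itself a theorem of Wang, proved by the same special functions, not a formal consequence of the one-dimensional Davis inequality. Your sharpness observation (specialize to $a=0$, $\mathcal{V}\equiv 0$, so $Z=Y$ and the constant in \eqref{Davis} is optimal) is fine and agrees with the paper.
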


The maximal version of the above result reads as follows. Unfortunately, we have managed to prove it only in the real-valued case.

\begin{theorem}\label{potential-Thm}
Let $Y$ be a real-valued martingale with continuous paths. Consider the process
$$Z_t=e^{-at+\int_0^t V_s ds}\int_0^t e^{as-\int_0^s V_udu}dY_s,$$
where $a\geq 0$ and $(V_t)_{t\geq 0}$ is a non-positive adapted and continuous process. Then for every $1 \leq p < \infty$ and any $T\geq 0$ we have the sharp bound
$$  \left\|\sup_{0\leq t\leq T}|Z_t|\right\|_p\leq A_p\|[Y, Y]^{1/2}_T\|_p.$$
\end{theorem}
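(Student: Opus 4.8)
The plan is to reduce the estimate to the known maximal bound \eqref{defAp} for stopped Brownian motion by passing through a time-change. Since $Y$ is a continuous real-valued martingale, by the Dambis--Dubins--Schwarz theorem we may write $Y_t = \beta_{[Y,Y]_t}$ for a Brownian motion $\beta$ (enlarging the probability space if necessary). The idea is then to recognize that $Z$, being a stochastic integral against $Y$ with a continuous integrand of finite variation in $t$ up to the exponential factors, is itself a continuous local martingale, hence also a time-changed Brownian motion, and to control its quadratic variation by $[Y,Y]$. Concretely, write $Z_t = e^{-at + \int_0^t V_s\,ds}N_t$ with $N_t = \int_0^t e^{as - \int_0^s V_u\,du}\,dY_s$; then $N$ is a continuous local martingale with $d[N,N]_t = e^{2(as - \int_0^s V_u\,du)}\,d[Y,Y]_t$. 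The key analytic observation is that the multiplier $\varphi_t := e^{-at + \int_0^t V_s\,ds}$ is nonincreasing (here we use $a\geq 0$ and $V\leq 0$), so an integration-by-parts / Itô argument should let us compare the maximal function of $Z = \varphi N$ with that of a single Brownian motion evaluated at time $[Y,Y]_T$.

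The cleanest route I would take: introduce the continuous local martingale $\widetilde Z_t := \int_0^t \varphi_s\,dN_s$, so that by Itô's formula $Z_t = \widetilde Z_t + \int_0^t N_s\,d\varphi_s = \widetilde Z_t - \int_0^t N_s\,|d\varphi_s|$ since $\varphi$ is nonincreasing. Then $d[\widetilde Z,\widetilde Z]_t = \varphi_t^2\,d[N,N]_t = \varphi_t^2 e^{2(as-\int_0^s V_u du)}d[Y,Y]_t = d[Y,Y]_t$ — the exponential factors cancel exactly — so $\widetilde Z$ is a time-changed Brownian motion with total clock $[Y,Y]_T$, and \eqref{defAp} gives $\|\sup_{t\le T}|\widetilde Z_t|\|_p \le A_p\|[Y,Y]_T^{1/2}\|_p$. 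It then remains to show that the correction term only helps, i.e. that $\sup_{t\le T}|Z_t| \le \sup_{t\le T}|\widetilde Z_t|$ pathwise, or at least that it does not increase the $L^p$ norm. This is the delicate point: one expects a Burkholder-type pointwise domination because subtracting $\int_0^t N_s|d\varphi_s|$ is a "drift pushing toward zero," but $N_s$ changes sign, so a naive pointwise bound fails. The rigorous argument should instead run the classical Burkholder method directly: find a function $U(x,y)$ on $\R\times[0,\infty)$, of the Burkholder type used for \eqref{defAp} (concave-in-the-right-sense, with $U(x,y)\ge |x|^p$ minus a constant times a "budget" variable), and verify that $t\mapsto U(Z_t, \max_{s\le t}|Z_t|, [Y,Y]_t)$ is a supermartingale; the nonpositivity of $V$ and $a\ge 0$ enter precisely to kill the extra drift term coming from $d\varphi_t$ with a favorable sign.

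The main obstacle will be making the last step precise — controlling the interaction between the running maximum and the non-martingale drift $\int_0^t N_s\,d\varphi_s$. I would handle it by importing the special function associated to the sharp maximal inequality \eqref{defAp} (the one realizing the constant $A_p$; its existence and properties can be quoted from the literature on sharp maximal estimates for martingales, e.g. the monograph cited in the excerpt), checking that its relevant concavity/monotonicity properties are robust enough that adding a decreasing multiplier in front of the martingale preserves the supermartingale property after applying Itô's formula and collecting the $d\varphi_t$ terms. The restriction to the real-valued case, flagged by the authors, is exactly what is needed here: the special function for the maximal inequality is available and has the right shape in dimension one but not in $\R^n$. Finally, sharpness is automatic: taking $a=0$ and $V\equiv 0$ reduces $Z$ to $Y$ itself, and then the bound is the defining extremal inequality \eqref{defAp} (via time change), which is sharp by definition of $A_p$.
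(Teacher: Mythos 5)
Your final plan is essentially the route the paper takes: work with the SDE $dZ_t=(V_t-a)Z_t\,dt+dY_t$ (so that $d[Z,Z]=d[Y,Y]$), apply It\^o's formula to a Burkholder-type function of the triple $(Z_t,\sup_{s\le t}|Z_s|,[Y,Y]_t)$, and use $V\le 0$, $a\ge 0$ to make the drift contribution nonpositive; your first decomposition $Z=\widetilde Z+\int N\,d\varphi$ indeed does not close, as you acknowledge. The problem is that the proposal leaves exactly the crucial step unproved, and in a form that cannot simply be ``quoted from the literature'': since $A_p$ is not known explicitly, there is no explicit special function realizing \eqref{defAp} to import. The paper's way around this is to \emph{define} the special function as the value function of an optimal stopping problem,
$$U(x,y,t)=\sup_{\tau\in L^{p/2}}\E\, G\Big(x+B_\tau,\big(\sup_{0\le s\le\tau}|x+B_s|\big)\vee y,\;t+\tau\Big),\qquad G(x,y,t)=y^p-A_p^{p}t^{p/2},$$
so that $U\ge G$ and $U(0,0,0)\le 0$ hold by the very definition of $A_p$ as the best constant in \eqref{defAp}, and the inequalities $U_t+\tfrac12 U_{xx}\le 0$ and $U_y(x,|x|,t)\le 0$ follow from the strong Markov property.

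The specific property you need in order to ``kill the extra drift with a favorable sign'' is that $U_x(x,y,t)$ has the same sign as $x$: after It\^o's formula the drift term is $\int_0^t U_x(R_s)\,Z_s(V_s-a)\,ds$, and it is nonpositive precisely when $U_x(Z_s)Z_s\ge 0$. This is not a formal consequence of the concavity-type properties of a maximal-inequality Bellman function; the paper proves it by showing that $x\mapsto U(x,y,t)$ is convex (this is where $p\ge 1$ enters) and combining this with the symmetry $U(x,y,t)=U(-x,y,t)$, so that $U$ is nondecreasing in $|x|$. Your proposal neither identifies this property nor establishes it, so as written the argument has a genuine gap at its central point. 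The remaining ingredients you list -- localization so that the stochastic integral has mean zero, the majorization $U\ge G$, and sharpness via $a=0$, $V\equiv 0$ -- are correct and agree with the paper.
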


\begin{remark}\label{remarkdavis} A few comments on the constants $A_p$ and $D_p$ are in order. As shown by Davis \cite{Dav1}, for $0<p\leq 2$ the constant $D_p$ is the smallest positive zero of the confluent hypergeometric function of parameter $p$, while for $p\geq 2$, it is equal to the largest positive zero of the parabolic cylinder function of parameter $p$ (for the necessary definitions, see \cite{AS} or below). While the constant $A_p$ is not known explicitly, its behavior as $p\to \infty$ can be easily determined.  Indeed, it follows from the sharp good-$\lambda$ inequality in \cite{Ban1} that $A_p=O(\sqrt{p})$, as $p\to\infty$.  Since by Doob's maximal inequality, 
$$
\left\|\sup_{0\leq t\leq \tau}|B_{t}|\right\|_p\leq \frac{p}{p-1}\|B_\tau\|, \,\,\,\, 1<p<\infty, 
$$
a better (and more explicit) uniform estimate can be obtained by combining this with $D_p$: $A_p\leq \frac{p}{p-1}D_p$ for $1<p<\infty$. But for $p\geq 2$ we have $D_p\leq 2\sqrt{p+1/2}$; see  \cite{Ban1}.  The better estimate  $D_p\leq 2\sqrt{p}$ valid for all $p\geq 1$ is proved in \cite{Car}.  Thus for $1< p<\infty$, $A_p\leq \frac{2p^{3/2}}{p-1}$.   
\end{remark}

We come back to martingale inequalities which will have direct implications for Riesz transforms. To study the logarithmic and weak-type bounds, we will require the following two statements. Recall the function $\Psi$ given in \eqref{defPP}.

\begin{theorem}\label{log_thm}
Let $X$, $Y$ be two $\R^n$-valued martingales such that $Y$ is differentially subordinate to $X$. Then for $K>1$ and any $E\in \F$,
\begin{equation}\label{LlogL}
\sup_{t\geq 0}\E |Y_t|1_E\leq K\sup_{t\geq 0}\E \Psi(|X_t|)+\frac{\mathbb{P}(E)}{2(K-1)}.
\end{equation}
For each $K$, the constant $1/(2(K-1))$ is the best possible.
\end{theorem}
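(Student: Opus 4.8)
The plan is to use the Burkholder method: construct a special function $U\colon \R^n\times\R^n\times\{0,1\}\to\R$ (or, via a reduction, $U\colon\R^+\times\R^+\to\R$ depending only on $|X_t|$, $|Y_t|$ together with the indicator $1_E$) which dominates the relevant payoff and is a supermartingale along any pair of differentially subordinate martingales. Concretely, I would first reduce the vector-valued statement to a scalar one. Since $Y$ is differentially subordinate to $X$, the processes $|X|$ and $|Y|$ are controlled by one-dimensional Itô calculus: writing $f(x,y)=U(|x|,|y|)$ for a suitable $U$, one checks that the Hessian conditions needed for $f(X_t,Y_t)$ to be a supermartingale reduce to differential inequalities for $U$ on $\R^+\times\R^+$ (this is the standard passage, cf.\ Burkholder \cite{Bu3} and \cite{O0}). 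The event $E$ is handled by splitting: it suffices to find, for the fixed constant $K>1$, a function $U$ with $U(x,y)\geq |y|-\tfrac{1}{2(K-1)}$ for all $x,y$ and $U(x,y)\geq -K\Psi(|x|)$ for all $x,y$, together with $\E U(X_t,Y_t)\leq \E U(X_0,Y_0)\leq 0$; then on $E$ one uses the first minorant and on $E^c$ the second, and adds.

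The key step is writing down the candidate $U$. Given the appearance of $\Psi(t)=(t+1)\log(t+1)-t$ and its conjugate $\Phi(t)=e^t-1-t$, and the constant $\tfrac{1}{2(K-1)}$, I expect $U$ to be built from a logarithmic/exponential profile. A natural guess, mirroring \cite{O1}, is something of the shape
\begin{equation*}
U(x,y)=\begin{cases} |y|-K\Psi(|x|)-\dfrac{1}{2(K-1)}, & \text{in the region where }|y| \text{ is large relative to }|x|,\\[2mm] \text{an explicit ``harmonic-type'' interpolant,} & \text{otherwise,}\end{cases}
\end{equation*}
where the interpolant is chosen (by solving the associated ODE coming from the equality case $|H|=|K|$ in the differential subordination) so that $U$ is $C^1$ across the interface and concave in the right directions. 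Once $U$ is identified, one verifies: (a) the two pointwise minorations above; (b) $U(x,y)\leq 0$ whenever $|y|\leq|x|$, which gives $\E U(X_0,Y_0)\leq 0$ from $|Y_0|\leq|X_0|$; and (c) the supermartingale property via Itô's formula, where the drift term is $\tfrac12(U_{xx}+\cdots)\,d[X,X]+\cdots-\tfrac12(U_{yy}+\cdots)\,d[Y,Y]+\cdots$ and the sign condition on this drift follows from the structural inequalities satisfied by $U$, using that $d([X,X]-[Y,Y])\geq 0$. A small technical point: to run Itô's formula cleanly one may first assume $X,Y$ are bounded (or localize by stopping times) and that the inequality $\int\Psi(|X_t|)<\infty$ side is finite, then remove these restrictions by a standard limiting/truncation argument; the $LLogL$ growth of $\Psi$ makes the truncation harmless.

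The main obstacle is (b)+(c): producing the explicit interpolant and proving the drift inequality. This is where the real analytic work lies — one must solve the right boundary-value problem for $U$, check smoothness across the gluing curve, and verify the concavity/subordination inequality on the full quadrant. I would look to \cite{O1} and the monograph \cite{O0} for the exact form of the extremal function, since the present statement is essentially the logarithmic companion of the known sharp $LLogL$ estimate for the Hilbert transform, and the Bellman function there should transfer with only cosmetic changes. Finally, sharpness of $\tfrac{1}{2(K-1)}$ would be shown by exhibiting (approximately) extremal martingales — e.g.\ a suitable stopped two-dimensional Brownian motion or a discrete ``zigzag'' martingale pair — for which both sides of \eqref{LlogL} are asymptotically equal; equivalently, testing the inequality against the conjugate function on $\mathbb{S}^1$ in the spirit of Pichorides \cite{P} recovers the constant.
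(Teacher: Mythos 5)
Your overall strategy (a Burkholder--Bellman special function combined with a splitting over the event $E$) is the same as the paper's, but the reduction you formulate cannot be realized as stated. You ask for one function $U$ satisfying simultaneously $U(x,y)\geq |y|-\frac{1}{2(K-1)}$ for \emph{all} $(x,y)$ and $U(x,y)\leq 0$ whenever $|y|\leq |x|$ (the latter is what you use to get $\E U(X_0,Y_0)\leq 0$). Taking $y=x$ with $|x|$ large makes these two requirements contradict each other, so no such $U$ exists; the error is that you dropped the $-K\Psi(|x|)$ penalty from the first minorant. The correct majorization must carry the $\Psi$-term in both regimes: the paper's function satisfies $U(x,y)\geq \max\{|y|,\frac{1}{2(K-1)}\}-K\Psi(|x|)$ together with $\E U(X_t,Y_t)\leq \frac{1}{2(K-1)}$ (the additive constant is built into $U$, not obtained from an initial-condition argument), which first yields the intermediate bound $\E\max\{|Y_t|-\frac{1}{2(K-1)},0\}\leq K\E\Psi(|X_t|)$; the set $E$ is then split according to whether $|Y_t|\leq \frac{1}{2(K-1)}$ or not --- not into $E$ and $E^c$ --- and the two pieces are added. (Your $E$ versus $E^c$ splitting can be salvaged, but only after the $-K\Psi(|x|)$ term is restored in the first minorant.)

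The second gap is that the heart of the proof --- actually producing the special function and verifying its properties --- is deferred to the literature (\cite{O1}, \cite{O0}), whereas for this statement that construction is precisely what has to be done. The paper does it with no It\^o computation and no interpolation ODE: it takes Burkholder's elementary function $u_1$ (equal to $|y|^2-|x|^2$ on $\{|x|+|y|\leq 1\}$ and to $1-2|x|$ outside), for which $\E u_1(X_t,Y_t)\leq 0$ is already known (Lemma 2.2 of \cite{O-1}), and sets $U(x,y)=\int_0^\infty a(\lambda)\,u_1(x/\lambda,y/\lambda)\,\mathrm{d}\lambda+\frac{1}{2(K-1)}$ with the kernel $a(\lambda)=\frac{K}{2}\bigl(\frac{\lambda}{\lambda+1}\bigr)^2\chi_{[(K-1)^{-1},\infty)}(\lambda)$; the ``supermartingale-type'' bound then follows from Fubini's theorem, and only the pointwise majorization requires a one-dimensional calculus verification. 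Your plan to verify a drift inequality via It\^o for a $C^1$-glued interpolant is workable in principle but needs regularity and localization arguments you do not supply. Sharpness is likewise only promised: the paper exhibits an explicit extremal pair (Brownian motion started at $\frac{1}{2(K-1)}$ and stopped at $0$, its transform by $\operatorname{sgn}B$, both stopped on the curve $|y|=(|x|+1)/(K-1)$ where the majorization becomes an equality), and your aside that testing against the conjugate function on $\mathbb{S}^1$ would recover the constant is not justified, since $1/(2(K-1))$ is the constant for general (non-orthogonal) differential subordination and it is not clear that the orthogonal conjugate-function setting attains it.
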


\begin{theorem}\label{weak_thm}
Let $X$, $Y$ be two $\R^n$-valued martingales such that $Y$ is differentially subordinate to $X$. Then for $K>1$ and any $E\in \F$,
\begin{equation}\label{weak}
\sup_{t\geq 0}\E |Y_t|1_E\leq K_p||X||_p\mathbb{P}(E)^{1-1/p},
\end{equation}
where
\begin{equation}\label{defKp}
 K_p=\begin{cases}
\displaystyle \left(\frac{1}{2}\Gamma\left(\frac{2p-1}{p-1}\right)\right)^{1-1/p} & \mbox{if }1<p<2,\\
\displaystyle \left(p^{p-1}/2\right)^{1/p} & \mbox{if }p\geq 2.
\end{cases}
\end{equation}
For each $1<p<\infty$ the constant $K_p$ is the best possible.
\end{theorem}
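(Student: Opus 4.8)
The plan is to reduce the stated estimate to a single linear inequality and then to establish that inequality by Burkholder's method, i.e.\ by constructing a suitable special function of three variables and showing that it produces a supermartingale.

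\emph{Step 1: reduction.} First I would fix $t\geq 0$ and $E\in\F$, and set $Z_s=\E(1_E\mid\F_s)$ for $s\geq 0$; this is a martingale with values in $[0,1]$ satisfying $\E Z_0=\mathbb{P}(E)$. Since $|Y_t|$ is $\F_t$-measurable, $\E|Y_t|1_E=\E\bigl[|Y_t|\,\E(1_E\mid\F_t)\bigr]=\E|Y_t|Z_t$. Then, using the elementary identity $ab=\inf_{\beta>0}\bigl(\tfrac{\beta^p}{p}a^p+\tfrac{1}{p'\beta^{p'}}b^{p'}\bigr)$ for $a,b\geq0$ with $p'=p/(p-1)$, applied to $a=\|X_t\|_p$ and $b=K_p\mathbb{P}(E)^{1/p'}$ (so that $b^{p'}=K_p^{p'}\mathbb{P}(E)$), the bound $\E|Y_t|1_E\leq K_p\|X_t\|_p\,\mathbb{P}(E)^{1/p'}$ would follow once we show
\begin{equation}\label{wt:lin}
\E|Y_t|Z_t\leq \frac{1}{p}\E|X_t|^p+\frac{K_p^{p'}}{p'}\E Z_0
\end{equation}
for every differentially subordinate pair $(X,Y)$ and every $[0,1]$-valued martingale $Z$; the general value of $\beta$ in the splitting is recovered from the case $\beta=1$ displayed in \eqref{wt:lin} by applying it to $(\lambda X,\lambda Y)$ with $\lambda=\beta^{p/(p-1)}$. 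As $\|X_t\|_p\leq\|X\|_p$ and the resulting bound is uniform in $t$, the theorem reduces to \eqref{wt:lin}.

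\emph{Step 2: the special function.} Next I would look for $V\colon\R^n\times\R^n\times[0,1]\to\R$ with: (a) $V(x,y,z)\geq|y|z-\tfrac1p|x|^p$; (b) $V(x,y,z)\leq\tfrac{K_p^{p'}}{p'}\,z$ whenever $|y|\leq|x|$; and (c) $V(x',y',z')\leq V(x,y,z)+\langle V_x(x,y,z),x'-x\rangle+\langle V_y(x,y,z),y'-y\rangle+V_z(x,y,z)(z'-z)$ whenever $|y'-y|\leq|x'-x|$ and $z,z'\in[0,1]$ (together with the accompanying infinitesimal/Hessian version handling the continuous part of the paths). Property (c) makes $s\mapsto V(X_s,Y_s,Z_s)$ a supermartingale on $[0,t]$, so $\E V(X_t,Y_t,Z_t)\leq\E V(X_0,Y_0,Z_0)$; combining this with (a) on the left and with (b) and $|Y_0|\leq|X_0|$ on the right yields \eqref{wt:lin}. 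To construct $V$ I would pass, by rotational invariance, to a function of $|x|,|y|,z$ alone, and seek the least such function compatible with (c) that still dominates $|y|z-\tfrac1p|x|^p$ and equals $\tfrac{K_p^{p'}}{p'}z$ on $\{|y|\leq|x|\}$; along the extremal configuration for \eqref{wt:lin} equality must propagate, which turns (c) into an ordinary differential equation for the profile of $V$ whose solution is a power function for $p\geq2$ and a confluent–hypergeometric-type function for $1<p<2$ (the same dichotomy seen in the description of the Davis constants $D_p$). Matching the boundary data from (b) then pins down the optimal constant and should reproduce exactly the two formulas for $K_p$ in \eqref{defKp} — the $\Gamma$-evaluation for $1<p<2$ and the value $p^{p-1}/2$ for $p\geq2$. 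For sharpness I would run this extremal flow: for $1<p<2$ build $X,Y$ as linear functionals of a planar Brownian motion stopped at the exit from an explicitly chosen domain, so the exit law has the $\Gamma$-type moments matching $K_p$, and take $E=\{|Y|>c\}$ with $c$ tuned so the ratio in \eqref{weak} tends to $K_p$; for $p\geq2$ a simpler, essentially one-dimensional example suffices.

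The hard part will be Steps 2–3: guessing the correct $V$ and, above all, verifying the non-infinitesimal chord inequality (c) along differentially subordinate jumps, which has to be checked over a two-parameter family of quadruples $(x,y,x',y')$, and then carrying out the boundary computation that identifies the extremal constant as exactly $K_p$ rather than something larger. The change of regime at $p=2$ is genuine — the governing ODE changes type — so the two ranges $1<p<2$ and $p\geq2$ require parallel but separate analyses.
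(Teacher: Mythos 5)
Your Step 1 is fine, and it is in substance the same linearization the paper uses: the actual proof establishes, in each of the two ranges, an intermediate bound of the form $\E |Y_t|1_E\leq C\,\E|X_t|^p+c\,\mathbb{P}(E)$ and then recovers \eqref{weak} by applying that bound to $(X/\lambda,Y/\lambda)$ and minimizing over $\lambda$ --- exactly your Young-plus-scaling device. But from that point on your proposal is a wish list rather than a proof: the special function is never produced, the majorization and the concavity/supermartingale property are never verified, and the extremal examples are only gestured at, and these items are the entire content of the theorem, including the identification of the constant $K_p$ in \eqref{defKp}. Moreover, the guess you do make about the shape of the function points the wrong way. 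The relevant profile for $1<p<2$ is not of confluent-hypergeometric type and has nothing to do with the dichotomy governing the Davis constants $D_p$ (that dichotomy belongs to Theorem \ref{potential-nonmax}). In the paper one sets $\gamma(t)=e^{pt^{p-1}}\int_t^\infty e^{-ps^{p-1}}\,\mbox{d}s$, lets $H$ be the inverse of $t\mapsto t+\gamma(t)$, and builds $U(x,y)$ by integrating Burkholder's simple function $u_1(x/\lambda,y/\lambda)$ against an explicit kernel supported on $[\gamma(0),\infty)$; the majorization $U(x,y)\geq \max\{|y|,\gamma(0)/2\}-|x|^p$ together with $\gamma(0)=p^{-1/(p-1)}\Gamma\bigl(\tfrac{p}{p-1}\bigr)$ is what yields the value of $K_p$, and the extremal pair is one-dimensional: a Brownian motion $B$ and $\int \operatorname{sgn}B\,\mbox{d}B$, stopped on the curve $|y|=\gamma(x)$ where the majorization is an equality --- not a planar exit problem. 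For $p\geq 2$ one integrates the second simple function $u_\infty$ instead, and sharpness is not obtained from a new example at all but by reducing, via the stopping time $\inf\{t:|Y_t|\geq 1\}$, to Suh's sharp weak-type $(p,p)$ inequality.

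A second, smaller point: your three-variable function $V(x,y,z)$ with the auxiliary martingale $Z_s=\E(1_E\mid\F_s)$ is an unnecessary complication. Since the quantity you need is affine in the weight variable, any viable $V$ would have to be (or could be taken) affine in $z$, and the construction then collapses to a two-variable Burkholder function; the paper sidesteps $Z$ entirely by the elementary split of $E$ into the sets where $|Y_t|$ lies below or above a fixed threshold. So while your architecture is workable in principle and parallels the paper's, as it stands the proposal is missing precisely the objects and verifications that constitute the proof, and the analytic form you predict for them is not the correct one.
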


On the other hand, if one is interested in bounds for directional Riesz transforms, one exploits differentially subordinate martingales satisfying the orthogonality property. For example, the following result of Ba\~nuelos and Wang \cite{BW} leads to sharp $L^p$-bounds for Riesz transforms on $\R^n$.  (See also Arcozzi \cite{Ar} for results on Lie groups).
\begin{theorem}\label{martingale_thm1}
Let $X$, $Y$ be two real-valued orthogonal martingales such that $Y$ is differentially subordinate to $X$. Then 
\begin{equation}\label{martingaleIn_thm1}
||Y||_p\leq \cot\left(\frac{\pi}{2p^*}\right)||X||_p,\qquad 1<p< \infty,
\end{equation}
and the constant is the best possible.
\end{theorem}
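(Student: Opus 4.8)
The plan is to use the standard Burkholder-style method of proving two-sided $L^p$ estimates for orthogonal martingales via a special ``Bellman'' function, combined with an $\It\hat{o}$/Green-function argument. The key object is the function $V_p : \R \times \R \to \R$ adapted to the orthogonal setting. Rather than Burkholder's function for the unrestricted differential subordination (which gives only $p^*-1$), one exploits that orthogonality effectively halves the ``angular'' freedom of the pair $(X,Y)$, and this is what produces the Pichorides constant $\cot(\pi/(2p^*))$. Concretely, there is a superharmonic-type majorant $U_p(x,y)$ on the plane, homogeneous of degree $p$, with $U_p(x,y)\geq |y|^p - \cot^p(\pi/(2p^*))\,|x|^p$, such that along any pair of orthogonal martingales with $Y$ differentially subordinate to $X$, the process $U_p(X_t,Y_t)$ is a supermartingale. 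The function can be written down essentially explicitly: for $1<p\le 2$ it is (up to constants) the real part of an analytic power, e.g. built from $\mathrm{Re}\,(x+iy)^p$ or $|x+iy|^p\cos(p\arg(x+iy))$ on a suitable sector, with the sector opening angle $\pi/(2p^*)$ chosen to match the constant; for $p\ge 2$ one uses duality.

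First I would reduce, by the usual homogeneity and a standard approximation/stopping argument, to establishing the pointwise bound $\E\, U_p(X_t,Y_t) \le 0$ for all $t$, together with the majorization $|y|^p \le \cot^p(\pi/(2p^*))|x|^p + U_p(x,y)$; combining these two gives $\|Y_t\|_p^p \le \cot^p(\pi/(2p^*))\|X_t\|_p^p$, and then letting $t\to\infty$ (using the martingale convergence / Fatou) yields \eqref{martingaleIn_thm1}. Second, I would verify $\E\, U_p(X_0,Y_0)\le 0$ from $|Y_0|\le|X_0|$ and the structure of $U_p$, and then run $\It\hat{o}$'s formula on $U_p(X_t,Y_t)$: the first-order terms are martingale increments, the second-order terms split into $U_{xx}\,d[X,X] + 2U_{xy}\,d[X,Y] + U_{yy}\,d[Y,Y]$. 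Here orthogonality kills the cross term ($d[X,Y]=0$), and differential subordination means $d[Y,Y]\le d[X,X]$; so it suffices that $U_p$ satisfy the differential inequality $U_{xx} + U_{yy} \le 0$ together with $U_{yy}\le 0$ (or the appropriate sign conditions ensuring the drift is $\le 0$ under these two constraints). This is exactly where the choice of the sector angle and the exponent $p$ interact, and it is where the Pichorides constant is forced.

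The main obstacle is constructing and verifying the properties of $U_p$ — in particular checking the majorization $|y|^p - \cot^p(\pi/(2p^*))|x|^p \le U_p(x,y)$ on all of $\R^2$ and the concavity/superharmonicity inequalities simultaneously, across the two regimes $1<p\le 2$ and $p\ge 2$. For $p\le 2$ the harmonic-function (real-part-of-analytic) construction makes $U_{xx}+U_{yy}=0$ exactly, so the subtle point is the sign of $U_{yy}$ and the majorization, which reduces to a one-variable inequality for $\cos$ and power functions on the sector of half-angle $\pi/(2p^*)$; for $p\ge 2$ one passes to the conjugate exponent by a duality argument (the orthogonal differential subordination relation is, up to swapping roles, self-dual), transferring the $1<p\le2$ result. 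I would also need the routine technical care: localization by stopping times to ensure all stochastic integrals are genuine martingales and $U_p(X,Y)$ is integrable, and an approximation to reduce the $\R^n$-valued orthogonal case — wait, here $X,Y$ are stated real-valued, so this last reduction is unnecessary; the planar function $U_p$ applies directly. Sharpness of the constant is inherited from the Hilbert transform example (conjugate function on $\mathbb{S}^1$), for which the pair (harmonic extension, conjugate harmonic extension) realizes equality asymptotically, matching Pichorides \cite{P}; I would cite \cite{BW} for the details of this extremal computation.
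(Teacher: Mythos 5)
The paper does not actually prove Theorem \ref{martingale_thm1}: it is quoted from Ba\~nuelos and Wang \cite{BW}, so your sketch is to be measured against the argument there, which is indeed of the special-function/It\^o type you outline (explicit sector-type majorants in the spirit of Pichorides and Ess\'en, supermartingale property via It\^o, sharpness from the Hilbert transform). Your general strategy is therefore the right one, but two of your key steps fail as stated. First, the second-order conditions you impose on $U_p$ do not yield the supermartingale property. Under orthogonality the It\^o drift is $\tfrac12\bigl(U_{xx}\,d[X,X]+U_{yy}\,d[Y,Y]\bigr)$, and differential subordination only gives $0\le d[Y,Y]\le d[X,X]$; in particular $d[Y,Y]$ may vanish. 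If $U_{xx}+U_{yy}\le 0$ and $U_{yy}\le 0$ but $U_{xx}>0$ somewhere, then on a stretch where $d[Y,Y]=0$ the drift is strictly positive. What suffices is either ($U_{xx}\le 0$ and $U_{yy}\le 0$) or ($\Delta U\le 0$ and $U_{yy}\ge 0$); and for your harmonic construction on a sector in the range $1<p\le 2$ (where $\Delta U=0$, so $U_{xx}$ and $U_{yy}$ have opposite signs) it is necessarily the second pair, i.e.\ $U_{yy}\ge 0$ --- the opposite sign from the one you wrote --- that closes the argument (compare $p=2$, where $U(x,y)=y^2-x^2$ works precisely because it is convex in $y$ and harmonic).

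Second, the reduction of $p\ge 2$ to $1<p\le 2$ ``by duality'' is not available at the martingale level: the hypothesis ``$Y$ is differentially subordinate to $X$ and $[X,Y]\equiv 0$'' is not symmetric in $X$ and $Y$, and for a general such pair there is no predictable integrand that can be moved onto a dual test martingale, so the $L^q$ estimate for the conjugate exponent says nothing about the pair $(X,Y)$ at hand. This asymmetry is a genuine feature of the orthogonal setting --- it is exactly why the present paper, unable to prove orthogonal analogues of Theorems \ref{log_thm} and \ref{weak_thm}, works instead with the dual statements of \cite{O1}, \cite{O2} and performs duality at the level of the Riesz transforms (where $R_j^*=-R_j$ makes it legitimate), not at the level of martingales. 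Accordingly, the case $p\ge 2$ requires its own explicit special function, as in \cite{BW}; together with an actual verification of the majorization $|y|^p-\cot^p\!\left(\frac{\pi}{2p^*}\right)|x|^p\le U_p(x,y)$, which you only assert, these are the missing ingredients. Your sharpness paragraph (lower bound inherited from the Hilbert transform, i.e.\ Pichorides \cite{P} via \cite{BW}) is fine.
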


Thus, to establish logarithmic and weak-type inequalities for directional Riesz transforms, one needs ``orthogonal'' versions of Theorems \ref{log_thm} and \ref{weak_thm}. Unfortunately, we have been unable to establish such results. To overcome this difficulty, we will exploit the following dual statements, which have been obtained by the second-named author in \cite{O1} and \cite{O2}. Recall the function $\Phi$ given in \eqref{defPP} and the constant $L(K)$ given by \eqref{defL}.

\begin{theorem}\label{martinth}
Suppose that $X$, $Y$ are orthogonal martingales such that $||X||_\infty\leq 1$, $Y$ is differentially subordinate to $X$ and $Y_0\equiv 0$. Then for any $K>2/\pi$ we have
\begin{equation}\label{martingale_thm2}
\sup_{t\geq 0}\E \Phi\left(|Y_t|/K\right)\leq \frac{L(K)||X||_1}{K}.
\end{equation}
The inequality is sharp.
\end{theorem}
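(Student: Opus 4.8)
I would prove this by the Burkholder method, organized in three stages.

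\emph{Stage 1: reduction and the special function.} First I would reduce to showing that for every fixed $T\ge 0$,
\[
\E\,\Phi\!\left(|Y_T|/K\right)\le \frac{L(K)}{K}\,\E|X_T|;
\]
taking the supremum over $T$ and using $\E|X_T|\le\|X\|_1$ then yields the assertion. By the standard approximation, stopping and truncation arguments of this area (cf. \cite{BW}, \cite{O1}), together with Fatou's lemma, one may further assume that $X$ is real-valued with $|X_t|\le 1$, that $Y$ is bounded, and that both processes have continuous paths and arise as stochastic integrals against a common Brownian motion with integrands $\sigma,\rho$ obeying $\sigma\cdot\rho\equiv 0$ and $|\rho|\le|\sigma|$. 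The goal is then to produce a function $\mathcal{U}\colon[-1,1]\times\R^n\to\R$, of the form $\mathcal{U}(x,y)=u(x,|y|)$, such that: (i) \emph{(majorization)} $\mathcal{U}(x,y)\ge\Phi(|y|/K)-\tfrac{L(K)}{K}|x|$; (ii) \emph{(initial condition)} $\mathcal{U}(x,0)\le 0$ for $|x|\le 1$; (iii) \emph{(structural inequalities)} $u$ is $C^2$ in the interior with $u_{xx}\le 0$ and $u_{xx}+u_{rr}+\tfrac{n-1}{r}u_r\le 0$, together with the appropriate one-sided behaviour on $\{|x|=1\}$. Granting such a $\mathcal{U}$, It\^o's formula applied to $\mathcal{U}(X_t,Y_t)$ produces a local martingale plus a drift whose density — by orthogonality, which annihilates the mixed term $u_{xr}\,d[X,Y]$, and by differential subordination, which lets one absorb the excess of $d[X,X]$ over $d[Y,Y]$ using $u_{xx}\le 0$ — is nonpositive; hence $\mathcal{U}(X_t,Y_t)$ is a supermartingale. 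Since $Y_0\equiv 0$ and $|X_0|\le 1$, this gives $\E[\Phi(|Y_T|/K)-\tfrac{L(K)}{K}|X_T|]\le\E\,\mathcal{U}(X_T,Y_T)\le\E\,\mathcal{U}(X_0,0)\le 0$, as required.

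\emph{Stage 2: construction of $\mathcal{U}$ and the origin of $L(K)$.} The function is dictated by the extremal configuration, which, as for Pichorides' conjugate-function estimate, is conformal. Let $F$ be the conformal map of the upper half-plane onto the strip $\{w:|\operatorname{Re}w|<1\}$ with $F(i)=0$; explicitly $F(z)=-\tfrac{2i}{\pi}\log z-1$, so that for real $t$ one has $\operatorname{Re}F(t)=\pm 1$ and $\operatorname{Im}F(t)=-\tfrac{2}{\pi}\log|t|$. Running planar Brownian motion from $i$ until it hits $\R$, the exit point has the Cauchy density $\tfrac{1}{\pi(1+t^2)}$, and the stopped conformal martingale $(X,Y)=(\operatorname{Re}F,\operatorname{Im}F)$ has $Y_0=0$, $|X|\le 1$, and
\[
\E\,\Phi(|Y_\infty|/K)=\frac1\pi\int_\R\frac{\Phi\!\left(\tfrac{2}{\pi K}|\log|t||\right)}{1+t^2}\,dt=\frac{L(K)}{K},\qquad \E|X_\infty|=1,
\]
the integral converging precisely because $K>2/\pi$. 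This both identifies the constant and indicates that $\mathcal{U}$ should be taken as the value function of the associated Dirichlet-type (obstacle) problem on the strip: the least function on $[-1,1]\times\R^n$ that majorizes the obstacle $\Phi(|y|/K)-\tfrac{L(K)}{K}|x|$, equals $\Phi(|y|/K)-\tfrac{L(K)}{K}$ on $\{|x|=1\}$, and satisfies the superharmonicity in (iii). Composing with $F$ yields a closed-form expression for $\mathcal{U}$; then (ii) follows from $F(i)=0$ via the averaging representation, and (i) is verified from the boundary data using that $(x,y)\mapsto\Phi(|y|/K)$ is subharmonic together with the minimum principle.

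\emph{Stage 3: the main obstacle and sharpness.} The technical heart, and the step I expect to require genuine work, is the verification of the pointwise inequalities (i) and (iii) for the explicit $\mathcal{U}$ — in particular the concavity $u_{xx}\le 0$ and the superharmonicity $u_{xx}+u_{rr}+\tfrac{n-1}{r}u_r\le 0$ — which calls for delicate estimates on the closed-form expression, in the spirit of \cite{O1}. Everything else (the It\^o bookkeeping, the passage from $|x|<1$ to $|x|\le 1$, and the removal of the boundedness assumption on $Y$) is routine. Sharpness is then witnessed by the conformal pair $X_t=\operatorname{Re}F(B_{t\wedge\tau})$, $Y_t=\operatorname{Im}F(B_{t\wedge\tau})$: here $\|X\|_1=1$ and, since $\Phi(|Y_t|/K)$ is a submartingale, $\sup_{t}\E\,\Phi(|Y_t|/K)=\lim_t\E\,\Phi(|Y_t|/K)=L(K)/K=\tfrac{L(K)}{K}\|X\|_1$, so no smaller constant is admissible.
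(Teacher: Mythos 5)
You should first note that the paper contains no proof of Theorem \ref{martinth}: it is quoted verbatim from \cite{O1}, where it is proved by exactly the architecture you describe — a Burkholder-type special function on the strip $[-1,1]\times\R$ combined with the conformal map of the half-plane onto that strip, the exit distribution being the Cauchy law, which is where $L(K)$ and the restriction $K>2/\pi$ come from. Your Stage 3 sharpness argument (the pair $X=\operatorname{Re}F(B_{\cdot\wedge\tau})$, $Y=\operatorname{Im}F(B_{\cdot\wedge\tau})$, with $\E\Phi(|Y_\infty|/K)=L(K)/K$ and $\|X\|_1=1$) is correct and is the same example used there; Fatou's lemma together with the inequality itself already forces equality, so that part is sound.

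As a proof, however, your proposal has a genuine gap: the object on which everything rests is never produced. ``Composing with $F$'' does not yield the least majorant of the obstacle, you give no closed formula for $\mathcal{U}$, and the verification of the majorization (i) and of the differential inequalities (iii) — which you yourself flag as the technical heart — is precisely the content of the proof in \cite{O1}; without it nothing is proved. Two further points would need repair even at the level of the outline. First, your claim that the initial condition $\mathcal{U}(x,0)\le 0$ ``follows from $F(i)=0$'' only addresses $x=0$: for general $|x|\le 1$ this condition is essentially equivalent to the inequality being proved (conditioning at time $0$), so defining $\mathcal{U}$ abstractly as a value function is circular unless the explicit function is written down and checked. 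Second, for $\R^n$-valued $Y$ the structural condition in (iii) is misstated: writing $U(x,y)=u(x,|y|)$, the drift in It\^o's formula involves $u_{xx}\,d[X,X]+\operatorname{tr}\bigl(D^2_yU\,G\bigr)$, where $G$ is an arbitrary nonnegative-definite Gram matrix with trace at most $d[X,X]/dt$; nonpositivity for all such $G$ requires $u_{xx}\le 0$, $u_{xx}+u_{rr}\le 0$ \emph{and} $u_{xx}+u_r/r\le 0$ (control of the largest eigenvalue of the $y$-Hessian), not merely $u_{xx}+u_{rr}+\frac{n-1}{r}u_r\le 0$; superharmonicity of $\mathcal{U}$ does not exclude a positive drift when $[Y,Y]$ concentrates in a direction where the Hessian in $y$ is positive. (For real-valued $Y$, which suffices for the applications in this paper, your condition does reduce to the correct one $u_{xx}+u_{yy}\le 0$.)
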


The second result, dual to the weak type estimate, is as follows (cf. \cite{O2}). 

\begin{theorem}\label{martingale_thm3}
Assume that $X$, $Y$ are orthogonal martingales such that $Y$ is differentially subordinate to $X$ and $Y_0\equiv 0$. Then for any $1< q<\infty$ we have
\begin{equation}\label{mainin4}
 ||Y||_q\leq C_p||X||_1^{1/q}||X||_\infty^{1/p}
\end{equation}
where $C_p$ is given by \eqref{defCp}.  The constant cannot be improved.
\end{theorem}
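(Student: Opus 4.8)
The plan is to establish \eqref{mainin4} by the method of special (Bellman) functions, in the same spirit as the proofs of Theorems \ref{log_thm}, \ref{weak_thm} and \ref{martinth}, and then to check that the constant $C_p$ cannot be decreased by exhibiting near-extremal pairs. First I would record the elementary reductions. Both sides of \eqref{mainin4} are homogeneous of degree one under the scaling $(X,Y)\mapsto(\lambda X,\lambda Y)$, so we may assume $\|X\|_\infty=1$; since $t\mapsto\E|X_t|$ is nondecreasing for a martingale one has $\|X\|_1=\sup_t\E|X_t|$, and since $t\mapsto\|Y_t\|_q$ is nondecreasing by conditional Jensen, it suffices to prove $\E|Y_T|^q\le C_p^q\,\E|X_T|$ for every fixed $T$. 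By the usual approximation and time-change arguments (cf.\ \cite{BW}, \cite{W}) we may also assume that $X$ and $Y$ have continuous paths, that $X$ stays in the region $\{|x|\le1\}$, and, after localizing by stopping times $\tau_n\uparrow\infty$, that the processes below are genuine martingales; the general case then follows by Fatou's lemma.

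The core of the argument is the construction of a function $U$ on $\{(x,y):\,|x|\le1\}\subset\R^n\times\R^n$, radial in each variable (so effectively $U=U(|x|,|y|)$), with three properties: \textbf{(a)} the majorization $U(x,y)\ge|y|^q-C_p^q|x|$ holds everywhere; \textbf{(b)} a concavity/ellipticity condition guaranteeing that $t\mapsto U(X_t,Y_t)$ is a supermartingale whenever $X,Y$ are orthogonal and $Y$ is differentially subordinate to $X$ --- in the two-dimensional model this amounts to $U$ being superharmonic and convex in $y$, since then It\^o's formula together with $d[X,Y]\equiv0$ and $d[Y,Y]\le d[X,X]$ force the drift $\tfrac12\big(U_{xx}\,d[X,X]+U_{yy}\,d[Y,Y]\big)$ in $dU(X_t,Y_t)$ to be nonpositive; \textbf{(c)} $U(x,0)\le0$ for $|x|\le1$. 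The function $U$ is explicit, its shape being dictated by the extremal configuration in which $(X,Y)$ is a conformal (time-changed planar Brownian) martingale absorbed on the two boundary lines $\{x=\pm1\}$: up to the correction term $C_p^q|x|$, it is the harmonic function on the strip $\{-1<x<1\}$ with boundary values $|y|^q$. The constant $C_p$ of \eqref{defCp} is precisely the value for which (a) and (c) become simultaneously attainable, and computing it reduces to a Davis-type evaluation of the Poisson integral of $|y|^q$ over the boundary of the strip --- equivalently, to the series $\sum_k(2k+1)^{-(q+1)}$ and $\sum_k(2k+1)^{-q}$, which is where the Dirichlet beta and Riemann zeta functions in \eqref{defCp} come from. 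Alternatively, $U$ may be produced as a Legendre-type transform of the special function behind the weak-type estimate \eqref{add2}, reflecting the duality (at the level of operators) between \eqref{add2} and an $L^q$ bound.

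Granting such a $U$, the conclusion follows quickly. By (b) and optional stopping, $\E\,U(X_{T\wedge\tau_n},Y_{T\wedge\tau_n})\le\E\,U(X_0,Y_0)=\E\,U(X_0,0)\le0$ using (c) and $Y_0\equiv0$; combining this with (a) gives $\E|Y_{T\wedge\tau_n}|^q\le C_p^q\,\E|X_{T\wedge\tau_n}|\le C_p^q\|X\|_1$, and letting $n\to\infty$ (Fatou, using path-continuity and $\|X\|_\infty=1$) yields $\E|Y_T|^q\le C_p^q\|X\|_1$, that is $\|Y_T\|_q\le C_p\|X\|_1^{1/q}=C_p\|X\|_1^{1/q}\|X\|_\infty^{1/p}$, which is \eqref{mainin4}. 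For the optimality of $C_p$ I would run the extremal configuration in reverse: let $(X,Y)$ be a conformal martingale started near the axis of the strip and stopped upon leaving a thin neighborhood of the boundary lines $\{x=\pm1\}$; shrinking the neighborhood produces pairs for which the two sides of \eqref{mainin4} are asymptotically equal. These are essentially Davis's extremizers, the same ones witnessing the sharpness of \eqref{add2} for the directional Riesz transforms on $\R^n$.

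The main obstacle is, without doubt, the construction and verification of $U$: one must write down a closed-form candidate and check the pointwise majorization (a) with the \emph{sharp} constant --- for each fixed $x$ this is a real one-variable optimization --- while simultaneously verifying the differential inequalities in (b), and all three must hold with the single constant $C_p$ of \eqref{defCp}. The remaining points --- reducing to continuous paths, justifying the localization, and controlling the limit ($\sup_t|Y_t|$ need not be integrable a priori, so one argues along the stopped times $T\wedge\tau_n$ and passes to the limit by Fatou) --- are standard but deserve a careful write-up.
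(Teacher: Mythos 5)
Your proposal is a plan rather than a proof, and the plan's core is left empty. Note first that the paper itself does not prove Theorem \ref{martingale_thm3}: it is quoted as a known result of the second author, with the proof residing in \cite{O2}; so the only question is whether your argument would stand on its own. The strategy you describe (a special function $U$ with the majorization $U(x,y)\geq |y|^q-C_p^q|x|$ on $\{|x|\leq 1\}$, a concavity/orthogonality condition making $U(X_t,Y_t)$ a supermartingale, and $U(x,0)\leq 0$, followed by optional stopping) is indeed the right framework and matches the method used throughout Section 2 of the paper. But everything that makes the theorem true with the \emph{specific} constant \eqref{defCp} happens inside the construction and verification of $U$, and you explicitly defer exactly that step ("granting such a $U$\ldots"). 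The reductions you do carry out (homogeneity, normalization $\|X\|_\infty=1$, localization, Fatou) are routine; without an explicit $U$, a verified majorization, and a verified differential inequality for orthogonal differentially subordinate pairs (in the vector-valued setting, not just the planar model), no part of \eqref{mainin4} has actually been established. The sharpness discussion has the same status: "essentially Davis's extremizers" is a pointer, not an argument.

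There is also a concrete reason to doubt that the single candidate you propose could work across the whole range of exponents. The function "harmonic extension of $|y|^q$ to the strip $\{|x|<1\}$ minus $C_p^q|x|$", with the constant pinned down at the center of the strip, produces (via the harmonic measure of the strip, whose density is a hyperbolic secant) precisely the alternating series $\sum_k(-1)^k(2k+1)^{-(q+1)}$, i.e.\ the Dirichlet-beta expression that \eqref{defCp} assigns only to the regime $1<p<2$. For $2\leq p<\infty$ the sharp constant involves the non-alternating series $\sum_k(2k+1)^{-q}$ (equivalently $\zeta(q)$), so the binding configuration is no longer the symmetric one at the center of the strip and a genuinely different special function is required — exactly as the paper's own proofs of Theorem \ref{weak_thm} split into the cases $1<p\leq 2$ and $2<p<\infty$ with different integral kernels. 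Your sketch does not acknowledge this dichotomy, so even as a blueprint it covers at most one of the two regimes; the verification of the majorization, the superharmonicity/convexity conditions, and the matching extremal examples would all have to be done separately (and differently) in the two cases before the proof could be considered complete.
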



\subsection{Proof of Theorem \ref{thmmart}}  
The proof of this statement in the cases $1<p< 2$ and $p\geq 2$ will be completely different. In both cases, we will make use of Burkholder's special function corresponding to his celebrated $L^p$-inequalities \eqref{martingaleIn_thm?} for differentially subordinate martingales. However, in the first case we will exploit the integration argument (see \cite{O-2}, \cite{O-1}, \cite{O0}), while in the second case we will proceed directly; this approach will allow us to avoid several technical problems. Clearly, all we need is to establish the inequality \eqref{mart_inequality}; its sharpness follows immediately from the fact that the constant $p^*-1$ is the best in the bound $||Y||_p\leq (p^*-1)||X||_p$, which corresponds to the choice $\mathcal{V}\equiv 0$. Furthermore, observe that we may assume that $a=0$, replacing $\mathcal{V}$ by the symmetric and non-positive matrix $\mathcal{V}-a\mbox{Id}$, if necessary.

\begin{proof}[Proof of \eqref{mart_inequality}, $1<p<2$]
It is convenient to split the reasoning into two parts.

\smallskip

\emph{Step 1.} 
Fix $0<r<\infty$. We will exploit the special function $u_r:\R^n\times \R^n\to \R$, given by the formula
\begin{equation}\label{defur}
 u_r(x,y)=\begin{cases}
r^{-2}(|y|^2-|x|^2) & \mbox{if }|x|+|y|\leq r,\\
1-2r^{-1}|x| & \mbox{if }|x|+|y|>r.
\end{cases}
\end{equation}
It is straightforward to check the pointwise bound
\begin{equation}\label{intt}
u_r(x,y)\leq 1-2r^{-1}|x|\qquad \mbox{for }x,\,y\in \R^n.
\end{equation}
Introduce the stopping time $\tau=\inf\{t\geq 0:|Z_t|+|X_t|\geq r\}\wedge T$ and let 
$$\sigma_n=\inf\{t:|Y_t|+|X_t|\geq n\}\wedge T,\qquad n=1,\,2,\,\ldots$$
be a common localizing sequence for $X$ and $Y$ (here and below, we use the convention $\inf\emptyset=\infty$). First, we will prove that
\begin{equation}\label{I1}
 \E u_r(X_{\sigma_n},Z_{\sigma_n})\leq \E u_r(X_{\sigma_n\wedge \tau},Z_{\sigma_n\wedge \tau}).
\end{equation}
To show this, note that $u_r(X_{\sigma_n},Z_{\sigma_n})= u_r(X_{\sigma_n\wedge \tau},Z_{\sigma_n\wedge \tau})$ on the set $\{\tau=T\}$, and hence  
$ \E \left[u_r(X_{\sigma_n},Z_{\sigma_n})|\F_{\sigma_n\wedge \tau}\right]=u_r(X_{\sigma_n\wedge \tau},Z_{\sigma_n\wedge \tau})$ 
there. On the other hand, on $\{\tau<T\}$ we have, by \eqref{intt},
\begin{align*}
 \E \left[u_r(X_{\sigma_n},Z_{\sigma_n})|\F_{\sigma_n\wedge \tau}\right]&\leq 1-2r^{-1}\E(|X_{\sigma_n}||\F_{\sigma_n\wedge \tau})\\
 &\leq 1-2 |X_{\sigma_n\wedge \tau}|=u_r(X_{\sigma_n\wedge \tau},Z_{\sigma_n\wedge \tau}).
 \end{align*}
Adding the latter two facts and taking expectation yields \eqref{I1}. Now we apply It\^o's formula to the function $u_r$ and the process $(X_t,Z_t)_{0\leq t\leq \sigma_n\wedge \tau}$. Note that if $\tau>0$, then the process evolves in the set $\{(x,y):|x|+|y|\leq r\}$, in the interior of which $u_r$ is of class $C^\infty$.  Thus the use of It\^o's formula is permitted. We easily check that $Z$ satisfies the stochastic differential equation $$\mbox{d}Z_t=\mathcal{V}_tZ_t\mbox{d}t+\mbox{d}Y_t$$
(recall that we have assumed $a=0$) and we get
\begin{equation}\label{ito12}
 u_r(X_{\sigma_n\wedge \tau},Z_{\sigma_n\wedge \tau})=I_0+I_1+I_2+I_3,
\end{equation}
where
\begin{align*}
I_0&=u_r(X_0,Z_0),\\
I_1&=\int_{0+}^{\sigma_n\wedge \tau} 2\langle Z_s, \mathcal{V}_sZ_s\rangle\mbox{d}s,\\
I_2&=[Z,Z]_{\sigma_n\wedge \tau}-[Z,Z]_0-([X,X]_{\sigma_n\wedge \tau}-[X,X]_0),\\
I_3&=-2\int_0^{\sigma_n\wedge \tau} X_s\cdot \mbox{d}X_s+2\int_0^{\sigma_n\wedge \tau} Z_s\cdot \mbox{d}Y_s.
\end{align*}
The symbol $\langle \cdot,\cdot\rangle$ in $I_1$ denotes the usual scalar product in $\R^n$. Let us analyze the terms $I_0-I_3$. We start from observing that $I_0=u_r(X_0,0)\leq 0$. Next, since $\mathcal{V}$ takes values in the class of non-positive matrices, we see that the integrand in $I_2$ is nonpositive, and hence $I_1\leq 0$. To deal with $I_2$, note that 
$$ [Z,Z]_{\sigma_n\wedge \tau}-[Z,Z]_0=[Y,Y]_{\sigma_n\wedge \tau}-[Y,Y]_0\leq [X,X]_{\sigma_n\wedge \tau}-[X,X]_0,$$
where the latter bound follows from the differential subordination of $Y$ to $X$. Finally, both stochastic integrals in $I_3$ have mean zero. 
Therefore, integrating both sides of \eqref{ito12} gives $ \E u_r(X_{\sigma_n\wedge \tau},Z_{\sigma_n\wedge \tau})\leq 0$, which combined with \eqref{I1} yields
$$ \E u_r(X_{\sigma_n},Z_{\sigma_n})\leq 0.$$

\emph{Step 2.} We turn to the inequality \eqref{mart_inequality}. It is not difficult to check that the function
\begin{equation}\label{integf}
 U_p(x,y)=\frac{p^{3-p}(p-1)(2-p)}{2}\int_0^\infty r^{p-1}u_r(x,y)\mbox{d}r.
\end{equation}
admits the following explicit formula:
$$ U_p(x,y)=p^{2-p}(|y|-(p-1)^{-1}|x|)(|x|+|y|)^{p-1}.$$
This is the celebrated Burkholder's special function \cite{Bu2, Bu3}.  
By the previous step and Fubini's theorem, we have $ \E U_p(X_{\sigma_n},Z_{\sigma_n})\leq 0.$ However, $U_p$ satisfies the majorization
$$ |y|^p-(p-1)^{-p}|x|^p\leq U_p(x,y)$$
(as shown by Burkholder \cite{Bu2, Bu3}), so we get
$$ \E |Z_{\sigma_n}|^p\leq (p-1)^{-p}\E |X_{\sigma_n}|^p\leq (p-1)^{-p}||X_T||_p^p.$$
It remains to let $n\to \infty$ to obtain the claim.
 \end{proof}

We turn to the case $p\geq 2$. We would like to point out that the above approach does not work. Though there exist appropriate ``simple'' functions $u_r$, they lead to Burkholder's function
\begin{equation}\label{Up}
 \tilde{U}_p(x,y)=p(1-1/p)^{p-1}(|y|-(p-1)|x|)(|x|+|y|)^{p-1}
\end{equation}
which is \emph{not} sufficient for our purposes; see the remark after the property (d) below.

We will work with the following modification of $\tilde{U}_p$. Define $U=U_p:\R^n\times \R^n\to \R$ by the formula
$$ U_p(x,y)=\begin{cases}
p(1-1/p)^{p-1}(|y|-(p-1)|x|)(|x|+|y|)^{p-1} & \mbox{if }|y|\geq (p-1)|x|,\\
|y|^p-(p-1)^p|x|^p & \mbox{if }|y|<(p-1)|x|.
\end{cases}$$
  
We will need the following three properties of the function $U_p$, established by Burkholder \cite{Bu2} (see also Wang \cite{W}):

\begin{itemize}
\item[(a)] The function $U_p$ is of class $C^1$.

\item[(b)] We have the majorization
$$|y|^p-(p-1)^p|x|^p\leq U_p(x,y)\qquad \mbox{ for all }x,\,y\in \R^n.$$

\item[(c)] If $|x||y|\neq 0$ and $|y|\neq (p-1)|x|$, then for all $h,\,k\in \R^n$,
\begin{equation}\label{convvv}
\begin{split}
\langle hU_{pxx}(x,y),h\rangle+2\langle hU_{pxy}(x,y),k\rangle+\langle kU_{pyy}(x,y),k\rangle \leq c(x,y)(|k|^2-|h|^2),
\end{split}
\end{equation}
where $c$ is a nonnegative function given by
$$ c_p(x,y)=\begin{cases}
p(p-1)(|x|+|y|)^{p-2} & \mbox{if }|y|>(p-1)|x|,\\
p(p-1)^p|x|^{p-2} & \mbox{if }|y|<(p-1)|x|.
\end{cases}$$
\end{itemize}

Here, of course, $U_{pxx}$ denotes the second derivative of $U_p$ with respect to the variable $x$ (i.e., the $d\times d$ matrix which has the corresponding second-order partial derivatives as its entries); the matrices $U_{pxy}$ and $U_{pyy}$ are defined similarly. 

In our considerations below, the following property will also play a role. Since $U_p$ depends on $y$ only through the norm $|y|$, we get that $U_{py}(x,y)=\alpha(x,y) y$ for a certain $\alpha(x,y)\in \R$. The key fact is that $\alpha$ is nonnegative; summarizing, we have
\begin{itemize}
\item[(d)] $U_{py}(x,y)=\alpha(x,y) y$ for a certain $\alpha(x,y)\geq 0$. 
\end{itemize}

This condition is \emph{not} satisfied by the function $\tilde{U}_p$ given in \eqref{Up}: the corresponding $\alpha$ may take negative values. This is the reason why we have taken the slightly more complicated function $U_p$.

\begin{proof}[Proof of \eqref{mart_inequality}, $2\leq p<\infty$]

Consider a $C^\infty$ function $g:\R^n\times \R^n\to [0,\infty)$, supported on the unit ball of $\R^n\times \R^n$ and satisfying $\int_{\R^n\times \R^n}g=1$. Fix $\delta>0$ and define $U^\delta$ by the convolution
$$ U^\delta(x,y)=\int_{\R^n\times \R^n} U_p(x+\delta u,y+\delta v)g(u,v)\mbox{d}u\mbox{d}v.$$
Obviously, this new function is of class $C^\infty$. 
By integration by parts and (a), we see that the following formulas hold true:
\begin{equation}\label{Uy}
U^\delta_y(x,y)=\int_{\R^n\times \R^n} U_{py}(x+\delta u,y+\delta v)g(u,v)\mbox{d}u\mbox{d}v,
\end{equation}
\begin{align*}
U^\delta_{xx}(x,y)&=\int_{\R^n\times \R^n} U_{pxx}(x+\delta u,y+\delta v)g(u,v)\mbox{d}u\mbox{d}v,
\end{align*}
and similarly for $U^\delta_{xy}$ and $U^\delta_{yy}$. Consequently, we have that \eqref{convvv} holds true for $U^\delta$, with 
$$ c^\delta(x,y)=\int_{\R^n\times \R^n} c(x+\delta u,y+\delta v)g(u,v)\mbox{d}u\mbox{d}v\geq 0.$$ 
Introduce the stopping times 
$$\sigma_n=\inf\{s: ||\mathcal{V}_s||+|X_s|+|Z_s|\geq n\}\wedge T,\qquad n=1,\,2,\,\ldots.$$
As we have already noted, $Z$ satisfies the equation $ \mbox{d}Z_t=\mathcal{V}_tZ_t\mbox{d}t+\mbox{d}X_t.$ Therefore, an application of It\^o's formula yields
\begin{equation}\label{ito13}
 U^\delta(X_{\sigma_n},Z_{\sigma_n})=U^\delta(X_0,Z_0)+I_1+I_2/2+I_3,
\end{equation}
where
\begin{align*}
I_1&=\int_0^{\sigma_n} \langle U_y^\delta(X_s,Z_s), \mathcal{V}_sZ_s\rangle\mbox{d}s,\\
I_2&=\int_0^{\sigma_n} U_{xx}^\delta(X_s,Z_s)\cdot \mbox{d}[X,X]_s\\
&\quad +2\int_0^{\sigma_n\wedge T} U_{xy}^\delta(X_s,Z_s)\cdot \mbox{d}[X,Z]_s+\int_0^{\sigma_n} U_{yy}^\delta(X_s,Z_s)\cdot \mbox{d}[Z,Z]_s,\\
I_3&=\int_0^{\sigma_n} U_x^\delta(X_s,Z_s)\cdot \mbox{d}X_s+\int_0^{\sigma_n} U_y^\delta(X_s,Z_s)\cdot \mbox{d}Y_s.
\end{align*}
Here in the definition of $I_2$ we have used a shortened notation; for instance, the first integral equals
$$ \sum_{i,j=1}^d\int_0^{\sigma_n} U_{x_ix_j}^\delta(X_s,Z_s) \mbox{d}[X^i,X^j]_s.$$

Let us analyze the terms $I_1$ through $I_3$ separately. To handle $I_1$, note that by (d), \eqref{Uy} 
and the fact that $||\mathcal{V}_s||\leq n$ for $s\in(0,\sigma_n]$, we get
\begin{align*}
\langle U_y^\delta(X_s,Z_s), \mathcal{V}_sZ_s\rangle&=\int_{\R^n\times \R^n} \big\langle U_y(X_s+\delta u,Z_s+\delta v),\mathcal{V}_s(Z_s+\delta v)\big\rangle g(u,v)\mbox{d}u\mbox{d}v\\
&\quad -\delta \int_{\R^n\times \R^n} \big\langle U_y(X_s+\delta u,Z_s+\delta v),\mathcal{V}_sv\big\rangle g(u,v)\mbox{d}u\mbox{d}v\\
&\leq n\delta \int_{\R^n\times \R^n} \big| U_y(X_s+\delta u,Z_s+\delta v)\big| g(u,v)\mbox{d}u\mbox{d}v\\
&\leq C(n,p)\delta.
\end{align*}
Here $C(n,p)$ is a certain constant depending only on the parameters indicated. 
Thus, we have $I_1\leq TC(n,p)\delta$. Next, using a simple approximation argument of Wang \cite{W} and \eqref{convvv}, we get
\begin{align*}
 I_2&\leq \int_0^{\sigma_n}c^\delta(X_s,Z_s)\,\mbox{d}([Z,Z]_s-[Y,Y]_s)\\
&=\int_0^{\sigma_n}c^\delta(X_s,Z_s)\,\mbox{d}([X,X]_s-[Y,Y]_s)\leq 0,
\end{align*}
where in the latter estimate we have exploited the differential subordination of $Y$ to $X$. Finally, both stochastic integrals in $I_3$ are equal to $0$. Plug all these facts into \eqref{ito13}, take expectation of both sides and let $\delta\to 0$. Since $U_p$ is continuous, we have that $U^\delta \to U_p$ pointwise; furthermore, the processes $Z$ and $X$ are bounded on the interval $(0,\sigma_n]$ which makes Lebesgue's dominated convergence theorem applicable. Consequently, we obtain $ \E U(X_{\sigma_n},Z_{\sigma_n})\leq \E U(X_0,Z_0)\leq 0,$ which by the majorization (b) implies
$$ \E |Z_{\sigma_n}|^p\leq (p-1)^p\E |X_{\sigma_n}|^p\leq (p-1)^p||X_T||_p^p.$$
Letting $n\to\infty$ yields the claim. This completes the proof of the theorem for all $1<p<\infty$.  
\end{proof}

\subsection{Proof of Theorem \ref{potential-nonmax}} 
We start from a few definitions; for the  detailed study of the objects below, we refer the interested reader to \cite{AS}. First we introduce Kummer's
function $M(a, b, z)$: it is a solution of the differential equation
$$ zw''(z) + (b- z)w'(z) - aw(z) = 0.$$
The explicit form of $M(a, b, z)$ is
\begin{equation}\label{kummer}
M(a,b,z)=1+\frac{a \cdot z}{b}+\frac{a(a+1)\cdot z^2}{b(b+1)\cdot 2!}+\frac{a(a+1)(a+2)\cdot z^3}{b(b+1)(b+2)\cdot 3!}+\ldots.
\end{equation}
Then $M_p$, the so-called confluent hypergeometric function, is given by the formula $M_p(x)=M(-\frac{p}{2},\frac{1}{2},\frac{x^2}{2})$. Let $\nu_p$ denote its smallest positive zero (the definition makes sense, see e.g. \cite{AS}). These objects allow us to define the special function corresponding to \eqref{nonmax}, in the range $0<p\leq 2$. Namely, for $x\in \R^n$ and $t\geq 0$, put
$$ U_p(x,t)=\begin{cases}
|x|^p-\nu_p^pt^{p/2} & \mbox{if }|x|\geq \nu_pt^{1/2},\\
p\nu_p^{p-1}t^{p/2}M_p(|x|/\sqrt{t})/M_p'(\nu_p) & \mbox{if }|x|<\nu_pt^{1/2}
\end{cases}$$
(there is no zero in the denominator, \eqref{kummer} gives that $M_p'$ takes negative values on $(0,\infty)$).

In the case $2\leq p<\infty$, we will require another special objects: \emph{parabolic cylinder functions}. They are related to the confluent hypergeometric functions as follows. First, put
$$ Y_1(x)=(2^{p/2}/\sqrt{\pi})\Gamma((p +1)/2)e^{-x^2/4}M\left(-\frac{p}{2},\frac{1}{2},\frac{x^2}{2}\right),$$
$$ Y_2(x)=(2^{(p+1)/2}/\sqrt{\pi})\Gamma((p +2)/2)xe^{-x^2/4}M\left(-\frac{p}{2}+\frac{1}{2},\frac{3}{2},\frac{x^2}{2}\right)$$
and define the parabolic cylinder function $\mathcal{D}_p$ by
$$ \mathcal{D}_p(x)=Y_1(x)\cos\left(\frac{p\pi}{2}\right)+Y_2(x)\sin\left(\frac{p\pi}{2}\right).$$
We set $h_p(x)=e^{x^2/4}\mathcal{D}_p(x)$, $x\in \R$, and denote the largest positive zero of $h_p$ by  $\mu_p$ (this is well defined, see \cite{AS}). We are ready to introduce the special functions $U_p$ corresponding to \eqref{nonmax} in the range $2\leq p<\infty$. Define, for $x\in\R^n$ and $t\geq 0$,
$$ U_p(x,t)=\begin{cases}
|x|^p-\mu_p^pt^{p/2} & \mbox{if }|x|<\mu_pt^{1/2},\\
p\mu_p^{p-1}t^{p/2}h_p(|x|/\sqrt{t})/h_p'(\mu_p) & \mbox{if }|x|\geq \mu_pt^{1/2}
\end{cases}$$
(the definition makes sense: it was proved in Lemma 5.3 in \cite{W0} that the function $h_p'$ is strictly positive on $[\mu_p,\infty)$). 

We will prove the following. 
\begin{lemma}
For any fixed $0<p<\infty$, the function $U_p$ enjoys the following properties:
\begin{itemize}
\item[(a)] $U_p$ is of class $C^1$.
\item[(b)] We have the majorization
$$ |x|^p-D_p^pt^{p/2}\leq U_p(x,t)\qquad \mbox{for all }x\in \R^n,\,t\geq 0.$$
\item[(c)] If $t>0$ and $|x|\neq D_pt^{1/2}$, then for any $h\in \R^n$,
$$ \frac{1}{2}\langle hU_{pxx}(x,t),h\rangle+U_{pt}(x,t)|h|^2\leq 0.$$
\item[(d)] For any $x\in \R^n$ and $t>0$ we have $U_{px}(x,t)=\alpha(x,t)x$ for some $\alpha(x,t)\geq 0$.
\end{itemize}
\end{lemma}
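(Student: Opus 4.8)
The plan is to verify the four properties by exploiting the fact that $U_p$ is built out of classical special functions (confluent hypergeometric functions for $0<p\le 2$, parabolic cylinder functions for $2\le p<\infty$), together with the known ODEs these functions satisfy. Throughout, I would treat $U_p$ as a function of $r=|x|\ge 0$ and $t\ge 0$, writing $U_p(x,t)=\phi_p(|x|,t)$; since $\phi_p$ is built homogeneously of degree $p$ jointly in $(r,\sqrt t)$, the whole analysis reduces to a one-variable study of $M_p$ (resp. $h_p$) on the region $r<\nu_p\sqrt t$ (resp. $r\ge\mu_p\sqrt t$), plus elementary estimates on the polynomial piece $r^p-D_p^p t^{p/2}$.

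First I would establish (a), the $C^1$-regularity. On each of the two open regions the function is visibly smooth (the denominators $M_p'(\nu_p)$ and $h_p'(\mu_p)$ are nonzero by the facts cited just before the lemma, and $M_p$, $h_p$ are entire). So the only issue is matching along the interface $|x|=D_p\sqrt t$, i.e. $r=\nu_p\sqrt t$ (resp. $r=\mu_p\sqrt t$). There one checks that the two formulas agree in value — this is immediate since $M_p(\nu_p)=0$ and $h_p(\mu_p)=0$, so both pieces vanish on the interface after subtracting $D_p^p t^{p/2}$... more precisely, one compares $r^p-\nu_p^p t^{p/2}$ with $p\nu_p^{p-1}t^{p/2}M_p(r/\sqrt t)/M_p'(\nu_p)$ at $r=\nu_p\sqrt t$: both equal $0$. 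For the first derivatives one differentiates each piece in $r$ and in $t$ and checks the one-sided limits coincide at the interface; this is a short computation using $M_p(\nu_p)=0$ again, which kills the leading boundary terms, together with homogeneity (the $t$-derivative can be recovered from the $r$-derivative and Euler's relation for degree-$p$ homogeneous functions, so it suffices to match $\partial_r$).

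Next, property (d): since $U_p$ depends on $x$ only through $|x|$, we automatically have $U_{px}(x,t)=\frac{\phi_{p,r}(|x|,t)}{|x|}\,x$, so $\alpha(x,t)=\phi_{p,r}(|x|,t)/|x|$, and the content is that $\phi_{p,r}\ge 0$, i.e. $r\mapsto \phi_p(r,t)$ is nondecreasing. On the outer/polynomial piece this is clear since $\partial_r(r^p-D_p^pt^{p/2})=pr^{p-1}\ge 0$. On the inner piece one needs $M_p'\le 0$ on $(0,\nu_p)$ divided by $M_p'(\nu_p)<0$, giving a nonnegative quotient — this is exactly the sign information recorded parenthetically in the definition (``$M_p'$ takes negative values on $(0,\infty)$''), and the analogous statement $h_p'>0$ on $[\mu_p,\infty)$ from Lemma 5.3 of \cite{W0}. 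Property (b), the majorization $|x|^p-D_p^pt^{p/2}\le U_p(x,t)$: on the region where $U_p$ equals the polynomial there is nothing to prove (equality), and on the other region one must show $p D_p^{p-1}t^{p/2}M_p(r/\sqrt t)/M_p'(\nu_p)\ge r^p-D_p^pt^{p/2}$; by homogeneity this reduces to a single-variable inequality (set $t=1$) comparing $p\nu_p^{p-1}M_p(r)/M_p'(\nu_p)$ with $r^p-\nu_p^p$ on $[0,\nu_p)$, which follows from the concavity/monotonicity properties of $M_p$ established by Wang \cite{W} (the difference vanishes at $r=\nu_p$ with matching derivative by (a), and one checks the sign of the second derivative of the difference, or simply quotes that this is the defining extremal property of Burkholder-type special functions for the $\|B_\tau\|_p$ problem).

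Finally, property (c), the differential inequality $\tfrac12\langle h U_{pxx}h\rangle + U_{pt}|h|^2\le 0$, is the heart of the matter and I expect it to be the main obstacle — though ``obstacle'' here mostly means ``bookkeeping with the Kummer ODE''. The point is that $U_p$ is by construction a (super)solution of the backward heat equation $\tfrac12\Delta_x U_p + U_{pt}=0$ in the radial sense; indeed $M_p(x)=M(-p/2,\tfrac12,x^2/2)$ is cooked up precisely so that $t^{p/2}M_p(|x|/\sqrt t)$ is space-time harmonic for the heat operator, via the Kummer equation $zw''+(b-z)w'-aw=0$ with $a=-p/2$, $b=\tfrac12$, $z=x^2/2$ — and similarly the parabolic cylinder function $\mathcal D_p$ gives, after the gauge $h_p=e^{x^2/4}\mathcal D_p$, a solution of the relevant equation. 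On the polynomial piece $r^p-D_p^pt^{p/2}$ one just computes: $\tfrac12\Delta_x(r^p)=\tfrac{p}{2}(p+n-2)r^{p-2}$ while $\partial_t(-D_p^pt^{p/2})=-\tfrac{p}{2}D_p^pt^{p/2-1}$, and the inequality becomes a numerical comparison on the region $|x|\le D_p\sqrt t$ (for $p\le2$) resp. $|x|\ge D_p\sqrt t$ (for $p\ge2$) — here one uses the characterization of $D_p$ as a zero of the appropriate function to get the sign right, and also has to be slightly careful because for $h$ not parallel to $x$ the Hessian of a radial function has the ``tangential'' eigenvalue $\phi_{p,r}/r$, which by (d) is nonnegative, so one splits $h$ into radial and tangential components and bounds each. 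I would organize the proof by (i) reducing everything to $t=1$ by homogeneity, (ii) recording the Kummer/parabolic-cylinder ODE and deriving the radial heat identity on the ``curved'' piece, (iii) doing the elementary computation on the polynomial piece, (iv) assembling (a)–(d); most of the monotonicity/sign facts I would simply cite from Wang \cite{W} and \cite{W0}, since they are precisely the classical lemmas underlying the sharp $\|B_\tau\|_p\le D_p\|\tau^{1/2}\|_p$ inequality.
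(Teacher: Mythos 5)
Your outline for (a), (b) and (d) matches the paper's proof (the paper disposes of (a) as a routine check, cites Davis and Wang for (b), and proves (d) exactly by your radial-monotonicity argument using the signs of $M_p'$ and $h_p'$). The problem is (c), which you rightly call the heart of the matter but do not actually handle. First, the inequality in (c) is a \emph{directional} (matrix) Hessian inequality, $\tfrac12\langle hU_{pxx}h\rangle+U_{pt}|h|^2\le 0$ for every $h$, not a statement about $\tfrac12\Delta_x U_p+U_{pt}$. Your proposed computation on the polynomial piece, comparing $\tfrac12\Delta_x(r^p)=\tfrac{p}{2}(p+n-2)r^{p-2}$ with $\tfrac{p}{2}D_p^p t^{p/2-1}$, is therefore the wrong quantity: it drags in the dimension through the factor $p+n-2$, and since $\nu_p,\mu_p$ depend only on $p$, the resulting numerical comparison fails for large $n$ on the relevant region. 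The correct (dimension-free) computation writes the Hessian of the radial function in direction $h$ as $\phi_{rr}\langle x,h\rangle^2/r^2+(\phi_r/r)\bigl(|h|^2-\langle x,h\rangle^2/r^2\bigr)$; for $0<p<2$ the term $p(p-2)r^{p-4}\langle x,h\rangle^2$ is discarded by sign and one needs $\nu_p\ge 1$, while for $p\ge2$ that term is \emph{non}negative, one must estimate $\langle x,h\rangle\le r|h|$ and invoke $\mu_p^2\ge p-1$ (Lemma 5.4 of Wang), a fact your "characterization of $D_p$ as a zero" remark does not supply.

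Second, on the curved piece the crux is the tangential direction. After using the one-dimensional backward-heat identity satisfied by $t^{p/2}M_p(r/\sqrt t)$ (it is the radial, one-variable heat operator that annihilates it, not $\tfrac12\Delta_x+\partial_t$), the inequality in (c) reduces exactly to $\tfrac{\phi_r}{2r}+\phi_t\le 0$, i.e.\ (after the sign of the normalizing constant) to $uM_p''(u)-M_p'(u)\le 0$ for $0<p<2$ and $uh_p''(u)-h_p'(u)\ge 0$ on $[\mu_p,\infty)$ for $p\ge 2$. This is the main technical content of the lemma: the paper proves the first by differentiating, using $\bigl(uM_p''-M_p'\bigr)'=uM_p'''=-puM_{p-2}'\le 0$ (term-by-term differentiation of the Kummer series), and the second from the ODE $h_p''-uh_p'+ph_p=0$ together with $h_p^{(3)}>0$, $h_p'>0$ on $[\mu_p,\infty)$. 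Your sketch never identifies this inequality; instead you observe that the tangential Hessian eigenvalue $\phi_r/r$ is nonnegative by (d) and say you will "bound each" component, but nonnegativity of $\phi_r/r$ is information in the wrong direction — what is needed is an \emph{upper} bound on $\tfrac12\phi_r/r$ by $-\phi_t$, and no argument for that is given. As written, (c) — and hence the lemma — is not proved.
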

\begin{proof} We establish the properties separately.

\smallskip

\emph{Proof of (a).}  This is straightforward; we leave the necessary calculations to the reader. 

\smallskip

\emph{Proof of (b).} The majorization was already proved by Davis \cite{Dav1} and Wang \cite{W0}. 

\smallskip

\emph{Proof of (c).} Assume first that $0<p<2$. If $|x|> \nu_pt^{1/2}$,  the inequality takes the form
$$ p(p-2)|x|^{p-4}\langle x,h\rangle^2+p|x|^{p-2}|h|^2-p\nu_p^pt^{p/2-1}|h|^2\leq 0.$$
However, the first term is nonpositive and it suffices to note that
$$ p\nu_p^pt^{p/2-1}|h|^2\geq p\nu_p^2|x|^{p-2}|h|^2\geq p|x|^{p-2}|h|^2.$$
If $|x|<\nu_pt^{1/2}$, then, after some tedious calculations, we rewrite the desired bound in the equivalent form
$$ (|x|^2|h|^2-\langle x,h\rangle^2)\left(M_p''\left(\frac{|x|}{\sqrt{t}}\right)\frac{|x|}{\sqrt{t}}-M_p'\left(\frac{|x|}{\sqrt{t}}\right)\right)\leq 0.$$
Therefore, it suffices to show that $uM_p''(u)-M_p'(u)\leq 0$ for $u\geq 0$. But this is easy: we have equality for $u=0$, and
$$ (uM_p''(u)-M_p'(u))'=uM_p'''(u)=-puM_{p-2}'(u)=-pu^2M'\left(\frac{2-p}{2},\frac{1}{2},\frac{u^2}{2}\right)\leq 0,$$
since all the terms in the series defining $M'(\frac{2-p}{2},\frac{1}{2},\frac{u^2}{2})$ are nonnegative.

We turn to the case $p\geq 2$. If $|x|<\mu_pt^{1/2}$, the estimate in (c) reads
$$ p(p-2)|x|^{p-4}\langle x,h\rangle^2+p|x|^{p-2}|h|^2-p\mu_p^pt^{p/2-1}|h|^2\leq 0.$$
Since $\langle x,h\rangle\leq |x||h|$ and $\mu_p^{p-2}t^{p/2-1}>|x|^{p-2}$, we will be done if we show that
$$ p(p-2)|x|^{p-2}|h|^2+p|x|^{p-2}|h|^2-p\mu_p^2|x|^{p-2}|h|^2\leq 0,$$
or $\mu_p^2\geq p-1$. However, the latter estimate appears in Lemma 5.4 in \cite{W0}. If $|x|>\mu_pt^{1/2}$, then, after some straightforward computations, we obtain the following  bound to prove:
$$ (|x|^2|h|^2-\langle x,h\rangle^2)\left(h_p''\left(\frac{|x|}{\sqrt{t}}\right)\frac{|x|}{\sqrt{t}}-h_p'\left(\frac{|x|}{\sqrt{t}}\right)\right)\geq 0.$$
The expression in the first parentheses is nonnegative, so it suffices to show that the second factor also has this property. We use the following statements which can be found in \cite{W0}: first, the function $h_p$ satisfies the differential equation $h_p''(u)-uh_p'(u)+ph_p(u)=0$; second, we have $h_p^{(3)}>0$ and $h_p'>0$ on $[\mu_p,\infty)$. The combination of these two facts gives
$$ 0<h_p^{(3)}(u)=uh_p''(u)-(p-1)h_p'(u)\leq uh_p''(u)-h_p'(u)$$
for $u\geq \mu_p$. The proof of (c) is finished.

\smallskip

\emph{Proof of (d).} It suffices to prove that for any fixed $t$, $U_p$ is an increasing function of $|x|$. But this follows immediately from the facts that for $0<p<2$ the function $M_p'$ is negative on $(0,\infty)$ (see the definition of $M_p$ and differentiate term-by-term), and for $p\geq 2$, the function $h_p$ is increasing on $[\mu_p,\infty)$ (cf. Lemma 5.3 in \cite{W0}).
\end{proof}

We are ready to establish \eqref{nonmax}, and the proof is similar to that of Theorem \ref{thmmart}: it exploits $U_p$ and a mollification argument. We may assume that $a=0$, replacing $\mathcal{V}$ by $\mathcal{V}-a$Id if this is not the case. Let $g:\R^n\times \R\to [0,\infty)$ be a $C^\infty$ function, supported on the unit ball of $\R^n\times \R$ and such that $\int_{\R^n\times \R}g=1$. For a fixed $\delta>0$, let $U^\delta:\R^n\times [\delta,\infty)\to \R$ be given by the convolution
$$ U_p^\delta(x,t)=\int_{[-1,1]^n\times [-1,1]} U_p(x+\delta u,t+\delta v)g(u,v)\mbox{d}u\mbox{d}v.$$
This function is of class $C^\infty$; furthermore, as we have already noted above, $Z$ satisfies the stochastic differential equation $dZ_t=\mathcal{V}_tZ_t\mbox{d}t+dY_t$. Introduce the stopping time
$$ \sigma_n=\inf\{t:||\mathcal{V}_t||+|Z_t|+[Z,Z]_t\geq n\}\wedge T$$
and apply It\^o formula to get
\begin{equation}\label{IIto}
 U_p^\delta(Z_{\sigma_n},\delta+[Z,Z]_{\sigma_n})=U_p^\delta(0,\delta)+I_1+I_2+I_3,
\end{equation}
where 
\begin{align*}
 I_1&=\int_0^{\sigma_n} U_{px}^\delta(Z_s,\delta+[Z,Z]_s)\cdot \mbox{d}Y_s,\\
 I_2&=\int_0^{\sigma_n} \frac{1}{2}U_{pxx}^\delta(Z_s,\delta+[Z,Z]_s)\cdot \mbox{d}[Y,Y]_s+\int_0^{\sigma_n}U_{pt}^\delta(Z_s,\delta+[Z,Z]_s)\mbox{d}[Y,Y]_s\\
 I_3&=\int_0^{\sigma_n} \langle U_{px}^\delta(Z_s,\delta+[Z,Z]_s),\mathcal{V}_sZ_s\rangle\mbox{d}s.
\end{align*}
The term $I_1$ has mean zero. The term $I_2$ is nonpositive, which can be shown with the use of (c) and the approximation argument of Wang \cite{W}. Finally, the term $I_3$ is dealt with in the same manner as the term $I_1$ in the proof of \eqref{mart_inequality}, $p\geq 2$: we have
\begin{align*}
\langle U_{px}^\delta&(Z_s,\delta+[Z,Z]_s),\mathcal{V}_sZ_s\rangle \leq C(n,p)\delta,
\end{align*}
for some $C(n,p)$ depending only on $n$ and $p$, so $I_3\leq C(n,p)T\delta$. Plugging all the facts above into \eqref{IIto} and taking expectation yields 
$$ \E U_p^\delta(Z_{\sigma_n},\delta+[Z,Z]_{\sigma_n})\leq U_p^\delta(0,\delta)+C(n,p)T\delta.$$
Letting $\delta\to 0$ gives $\E U_p(Z_{\sigma_n},[Z,Z]_{\sigma_n})\leq U_p(0,0)=0$, which, by (b), implies
$$ \E |Z_{\sigma_n}|^p\leq D_p^p\E [Z,Z]_{\sigma_n}^{p/2}.$$
It remains to let $n$ go to infinity, and the claim follows.

\subsection{Proof of Theorem \ref{potential-Thm}} The reasoning is similar as above, but the crucial difference is that the special function is not given explicitly.  Recall that for $0<p<\infty$, $A_p$ is the best constant in the Burkholder-Davis-Gundy inequality \eqref{defAp} for the stopped Brownian motion. 
Let $U$ be the value function of the corresponding optimal stopping problem: that is, for $x\in \R$, $y\geq 0$, $t\geq 0$, put
$$ U(x,y,t)=\sup_{\tau\in L^{p/2}} \E G\left(x+B_\tau,\left(\sup_{0\leq s\leq \tau}|x+B_s|\right)\vee y,t+\tau\right),$$
where the gain function $G$ is given by $G(x, y,t)=y^p-A_p^pt^{p/2}.$ Observe that $U$ satisfies the symmetry condition
\begin{equation}\label{symsU}
U(x,y,t)=U(-x,y,t),
\end{equation}
which follows immediately from the fact that $-B$ is also a Brownian motion. 
By the strong Markov property, one easily checks that the function $U$ satisfies the inequalities
\begin{equation}\label{Markov}
U_t+\frac{1}{2}U_{xx}\leq 0\qquad\mbox{and}\qquad  U_y(x,|x|,z)\leq 0.
\end{equation}
Finally, we have $U\geq G$, since one can always consider $\tau\equiv 0$ in the definition of $U$.

Next, let us establish the following property of $U$.

\begin{lemma}\label{aux}
If $p\geq 1$, then for any fixed $y$, $t$, the function $x\mapsto U(x,y,t)$ is convex.
\end{lemma}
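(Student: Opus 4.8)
The plan is to establish convexity of $x\mapsto U(x,y,t)$ directly from the probabilistic definition of $U$, exploiting the fact that the underlying Brownian motion enters $U$ only through translation. Fix $y\geq 0$ and $t\geq 0$, pick $x_1,x_2\in\R$ and $\lambda\in[0,1]$, and set $x=\lambda x_1+(1-\lambda)x_2$. For an arbitrary stopping time $\tau\in L^{p/2}$ we want to compare
$$\E G\!\left(x+B_\tau,\Big(\sup_{0\le s\le\tau}|x+B_s|\Big)\vee y,\,t+\tau\right)$$
with the analogous quantities centered at $x_1$ and $x_2$. The key structural observation is that $G(x,y,t)=y^p-A_p^pt^{p/2}$ depends on the spatial variables only through $y$, which is itself a supremum of $|x+B_s|$; and $u\mapsto |u|$ is convex, while $u\mapsto u^p$ is convex and nondecreasing on $[0,\infty)$ for $p\ge1$. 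Hence for each fixed path of $B$ and each fixed $s$,
$$|x+B_s|=|\lambda(x_1+B_s)+(1-\lambda)(x_2+B_s)|\le \lambda|x_1+B_s|+(1-\lambda)|x_2+B_s|,$$
and taking the supremum over $s\le\tau$ (supremum of a sum is at most the sum of suprema) and then the maximum with $y$ (using $y=\lambda y+(1-\lambda)y$ and subadditivity of $\max$) gives a pathwise pointwise bound
$$\Big(\sup_{0\le s\le\tau}|x+B_s|\Big)\vee y\ \le\ \lambda\Big(\sup_{0\le s\le\tau}|x_1+B_s|\vee y\Big)+(1-\lambda)\Big(\sup_{0\le s\le\tau}|x_2+B_s|\vee y\Big).$$

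\textbf{Assembling the inequality.} With that pathwise bound in hand, I would apply monotonicity of $u\mapsto u^p$ to pass to the $p$-th powers, then convexity of $u\mapsto u^p$ to split the right-hand side, obtaining pathwise
$$G\!\left(x+B_\tau,\big(\sup_{s\le\tau}|x+B_s|\big)\vee y,t+\tau\right)\le \lambda\,G\!\left(x_1+B_\tau,\big(\sup_{s\le\tau}|x_1+B_s|\big)\vee y,t+\tau\right)+(1-\lambda)\,G\!\left(x_2+B_\tau,\big(\sup_{s\le\tau}|x_2+B_s|\big)\vee y,t+\tau\right),$$
where the $-A_p^p(t+\tau)^{p/2}$ terms match up exactly since they do not depend on the spatial argument. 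Taking expectations and then the supremum over $\tau$ on the right (noting that the optimal $\tau$ for the left-hand center need not be optimal for $x_1$ or $x_2$, which only helps), we get
$$U(x,y,t)=\sup_\tau \E G(\cdots)\le \lambda\,U(x_1,y,t)+(1-\lambda)\,U(x_2,y,t),$$
which is exactly convexity in $x$. One should be slightly careful that the supremum over $\tau$ on the left is attained (or approximated) by stopping times in $L^{p/2}$ and that the right-hand suprema are over the same class; this is automatic since the class $L^{p/2}$ does not depend on the starting point.

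\textbf{Main obstacle.} The reasoning above is essentially soft, so the only genuine subtlety is integrability and finiteness: one must ensure that $U(x,y,t)$ is finite (so that the inequality chain is not $\infty\le\infty$) and that the expectations $\E G(x+B_\tau,\ldots)$ are well-defined for the stopping times under consideration. This is where the hypothesis $\tau\in L^{p/2}$ and the definition of $A_p$ as the \emph{best} constant in \eqref{defAp} enter: by that very inequality, $\E\big(\sup_{s\le\tau}|B_s|\big)^p\le A_p^p\|\tau^{1/2}\|_p^p<\infty$, and a routine translation/triangle-inequality argument controls $\E(\sup_{s\le\tau}|x+B_s|)^p$; combined with the sign of the $-A_p^pt^{p/2}$ term this shows $G(x+B_\tau,\ldots)$ is integrable and $U$ is finite (in fact one expects $U(x,|x|,0)\le 0$ by the very optimality defining $A_p$, though we do not need this here). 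Once finiteness is secured, the convexity follows purely from the pathwise subadditivity of the running maximum together with convexity and monotonicity of $u\mapsto u^p$ for $p\ge1$ — and it is precisely the restriction $p\ge1$ that makes $u\mapsto u^p$ convex on $[0,\infty)$, which is why the lemma is stated under that assumption.
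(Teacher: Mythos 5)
Your proof is correct and follows essentially the same route as the paper: a pathwise convexity bound (the paper applies convexity of $u\mapsto\left(|u|\vee y\right)^p$ at each time $s$ before taking the supremum, while you take the supremum first and then invoke monotonicity and convexity of $u\mapsto u^p$, which is the same idea), followed by taking expectations and the supremum over stopping times $\tau\in L^{p/2}$. Your extra remarks on integrability are a harmless addition that the paper leaves implicit.
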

\begin{proof}
Pick $x_1$, $x_2\in \R$, $\lambda\in (0,1)$ and $\tau\in L^{p/2}$. Put $x=\lambda x_1+(1-\lambda)x_2$. For any $s\geq 0$, we have
$$ \Big(|x+B_s|\vee y\Big)^p\leq \lambda \Big(|x_1+B_s|\vee y\Big)^p+(1-\lambda)\Big(|x_2+B_s|\vee y\Big)^p$$
and this inequality is preserved if we take the supremum over $0\leq s\leq \tau$ in all the three terms above. This yields
\begin{align*}
 &\E \Big[\Big(\sup_{0\leq s\leq \tau}|x+B_s|\vee y\Big)^p-A_p^p(t+\tau)^{p/2}\Big] \leq \lambda U(x_1,y,t)+(1-\lambda)U(x_2,y,t)
\end{align*}
and taking the supremum over all $\tau$ gives the claim.
\end{proof}

Having established Lemma \ref{aux}, we can now proceed with the proof of the theorem.  We assume, as we may, that $\E[Y,Y]_T^{p/2}<\infty$. The 
process $Z$ satisfies the stochastic differential equation  $dZ_t=Z_tV_t\mbox{d}t+dY_t.$ 
Apply It\^o's formula to $U$ and the process $R=(Z,\sup |Z|,[Y,Y])$ (we may assume that $U$ has the necessary regularity, using an appropriate mollification argument if necessary; see above). We obtain
$$ U\left(Z_{t},\sup_{0\leq s\leq t}|Z_s|,[Y,Y]_{t}\right)=I_0+I_1+I_2+I_3+I_4,$$
where
\begin{align*}
 I_0&=U(0,0, 0),\\
 I_1&=\int_0^{t} U_x(R_s)\mbox{d}Y_s,\\
 I_2&=\int_0^{t} \left[\frac{1}{2}U_{xx}(R_s)+U_t(R_s)\right]\mbox{d}[Y,Y]_s\\
 I_3&=\int_0^{t} U_y(R_s)\mbox{d}(\sup Z_s)\\
 I_4&=\int_0^t U_x(R_s)Z_sV_s\mbox{d}s.
 \end{align*}
 However, we have $I_0=U(0,0,0)\leq 0$, by the definition of $U$ and the fact that $A_p$ is the best constant in \eqref{defAp}. The term $I_1$ defines a local martingale and therefore, applying localization if necessary, we may assume that $\E I_1=0$. The terms $I_2$ and $I_3$ are nonpositive by \eqref{Markov}: for $I_2$  this is clear, for $I_3$ one needs to observe that the process $\sup Z$ increases on the (random) set $\{t: Z_t=\sup_{0\leq s\leq t} |Z_s|\}$, on which $U_y$ is nonpositive. It remains to deal with $I_4$. By Lemma \ref{aux} and the symmetry condition \eqref{symsU}, we see that for fixed $y,\,t$, $x\mapsto U(x,y,t)$ decreases on $(-\infty,0]$ and increases on $[0,\infty)$. Therefore, $U_x(R_s)$ has the same sign as $Z_s$, and this implies that the integrand in $I_4$ is nonpositive (since $V\leq 0$); so, $I_4\leq 0$. Thus,
 $$ \E U\left(Z_{\sigma_n}, \sup_{0\leq s\leq \sigma_n} |Z_s|,[Y,Y]_{\sigma_n}\right)\leq 0$$
 for some increasing sequence $(\sigma_n)_{n\geq 0}$ of stopping times converging to $T$ almost surely. 
 Since $U$ majorizes $G$, the same is true if we replace $U$ with $G$. Equivalently,
$$ \E \sup_{0\leq s\leq \sigma_n}|Z_s|^p\leq A_p^p\E [Y,Y]_{\sigma_n}^{p/2}.$$
It remains to let $n\to \infty$ to get the claim, by Lebesgue's monotone convergence theorem.

\subsection{Proof of Theorem \ref{log_thm}}
Once again, we shall deduce the inequality \eqref{LlogL} from the existence of a certain special function (or rather, a family of certain special functions) $U:\R^n\times \R^n\to \R$. To simplify the technicalities which arise during the study of the analytic properties of these special functions, we shall combine Burkholder's technique with the integration argument, which has already appeared in our considerations above. We  first introduce two simple functions  $u_1,\,u_\infty:\R^n\times \R^n\to \R$, for which the calculations are  easy, and then define $U$ by integrating these two functions against appropriate nonnegative kernels. Let
$$ u_1(x,y)=\begin{cases}
|y|^2-|x|^2 \,\,\,\qquad& \mbox{if }|x|+|y|\leq 1,\\
1-2|x| & \mbox{if }|x|+|y|> 1
\end{cases}$$ 
and
$$ u_\infty(x,y)=\begin{cases}
0 & \mbox{if }|x|+|y|\leq 1,\\
(|y|-1)^2-|x|^2 & \mbox{if }|x|+|y|>1.
\end{cases}$$
We have already encountered the function $u_1$ in \eqref{defur} (in fact, we have $u_r(x,y)=u_1(x/r,y/r)$ for all $r>0$ and $x,\,y\in \R^n$). 
These  functions enjoy the following property (see Lemma 2.2 in \cite{O-1}).

\begin{lemma}\label{mainmar}
For all  $\R^n$-valued martingales $X$, $Y$ such that $Y$ is differentially subordinate to $X$, we have
$$ \E v_1(X_t,Y_t)\leq 0\qquad \mbox{for all }t\geq 0.$$
If in addition $X$ satisfies $||X||_2<\infty$, then
$$ \E v_\infty(X_t,Y_t)\leq 0\qquad \mbox{for all }t\geq 0.$$
\end{lemma}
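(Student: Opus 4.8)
The plan is to prove both inequalities at once by Burkholder's method: establish enough pointwise structure of $u_1$ and $u_\infty$ to make $t\mapsto u_1(X_t,Y_t)$ and $t\mapsto u_\infty(X_t,Y_t)$ into supermartingales, and then read off the bound at $t=0$. Two elementary facts come first. Both functions are continuous on $\R^n\times\R^n$ and smooth on each of the open regions $\{|x|+|y|<1\}$ and $\{|x|+|y|>1\}$, away from $\{x=0\}$ (for $u_1$) and $\{y=0\}$ (for $u_\infty$); and a short computation gives the initial bound $u_1(x,y)\le0$, $u_\infty(x,y)\le0$ whenever $|y|\le|x|$ — in the outer region $|y|\le|x|$ forces $|x|>1/2$, hence $u_1=1-2|x|<0$, and $\big||y|-1\big|\le|x|$, hence $u_\infty=(|y|-1)^2-|x|^2\le0$, while in the inner region $u_1=|y|^2-|x|^2\le0$ and $u_\infty=0$. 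Since differential subordination gives $|Y_0|\le|X_0|$, this yields $\E\,u_\bullet(X_0,Y_0)\le0$ for $\bullet\in\{1,\infty\}$.

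The core of the argument is a concavity property of $u_1$ and $u_\infty$: within each smooth region the second-order condition
\[
\tfrac12\big(\langle U_{xx}h,h\rangle+2\langle U_{xy}h,k\rangle+\langle U_{yy}k,k\rangle\big)\le0\qquad\text{whenever }|k|\le|h|
\]
holds, and across the interface $\{|x|+|y|=1\}$ the outward normal derivative of $u_\bullet$ does not increase (the kink is ``concave''). The second-order check is brief: for $u_1$ the inner region yields exactly $|k|^2-|h|^2$ and the outer region yields $-|x|^{-1}(|h|^2-\langle \widehat{x},h\rangle^2)\le0$; for $u_\infty$ the inner region is identically $0$, and in the outer region the quadratic form equals $-|h|^2+|k|^2-|y|^{-1}(|k|^2-\langle\widehat{y},k\rangle^2)\le|k|^2-|h|^2\le0$. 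For the interface one uses that $|x|\le1$ along $\{|x|+|y|=1\}$, so the jump of the normal derivative is $-2-(-2|x|+2|y|)=-2+2|x|-2|y|\le0$ for $u_1$ and $-4|x|\le0$ for $u_\infty$. Equivalently — and this is the form needed to absorb jumps — one records the global first-order inequality $u_\bullet(x+h,y+k)\le u_\bullet(x,y)+\langle a_\bullet(x,y),h\rangle+\langle b_\bullet(x,y),k\rangle$ for $|k|\le|h|$, with $(a_\bullet,b_\bullet)=(\nabla_xu_\bullet,\nabla_yu_\bullet)$ at points of differentiability and, say, $0$ on the exceptional sets.

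With these in hand, the rest is the standard It\^o/Meyer--It\^o computation. After a mollification (if needed to license the expansion), applying It\^o's formula to $u_\bullet(X_t,Y_t)$ and killing the stochastic-integral part $\int\langle a_\bullet(X_{s-},Y_{s-}),dX_s\rangle+\int\langle b_\bullet(X_{s-},Y_{s-}),dY_s\rangle$ with a localizing sequence $(\sigma_m)$, the remaining terms are: the continuous-quadratic-variation term, nonpositive by the second-order inequality together with the differential subordination $d[X,X]-d[Y,Y]\ge0$; the interface local-time term, nonpositive by the concave-kink property; and a sum of jump terms $u_\bullet(X_s,Y_s)-u_\bullet(X_{s-},Y_{s-})-\langle a_\bullet,\Delta X_s\rangle-\langle b_\bullet,\Delta Y_s\rangle$, each nonpositive because differential subordination forces $|\Delta Y_s|\le|\Delta X_s|$, so the first-order inequality applies. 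Hence $\E\,u_\bullet(X_{t\wedge\sigma_m},Y_{t\wedge\sigma_m})\le\E\,u_\bullet(X_0,Y_0)\le0$, and we let $m\to\infty$. For $u_1$ this passage is immediate since $u_1\le1$ and $u_1(X_t,Y_t)\ge1-2|X_t|\in L^1$; for $u_\infty$ it requires $\|X\|_2<\infty$, which via differential subordination gives $\|Y\|_2\le\|X\|_2<\infty$ and hence uniform integrability of $\{u_\infty(X_{t\wedge\sigma_m},Y_{t\wedge\sigma_m})\}_m$ — this is precisely where the extra hypothesis on $X$ is used.

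The step I expect to be the main obstacle is not the It\^o bookkeeping but making the non-smooth version rigorous: neither function is $C^1$ across $\{|x|+|y|=1\}$, and each is non-differentiable on a lower-dimensional set ($\{x=0\}$ for $u_1$, $\{y=0\}$ for $u_\infty$), so one must either build a mollification that preserves the concavity inequalities in the limit, or invoke a Meyer--It\^o/Tanaka formula and verify that every local-time contribution has the favorable sign. Establishing the global first-order concavity inequality with the correct selection of $(a_\bullet,b_\bullet)$ on the exceptional sets also requires a genuine, if short, case analysis. The initial bound, the Hessian signs, and the jump estimate are all routine.
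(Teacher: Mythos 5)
Your proposal is correct in substance, but note that the paper does not actually prove this lemma: its ``proof'' is the citation to Lemma 2.2 of \cite{O-1}, where the two estimates are obtained by essentially the argument you outline (Burkholder's method: pointwise properties of $u_1,u_\infty$, It\^o's formula after mollification, localization, and the jump bound $|\Delta Y_s|\le|\Delta X_s|$ forced by differential subordination). So you are reconstructing in-line what the authors outsource, and your reconstruction has the same architecture as the paper's proofs of its other special-function inequalities (e.g.\ Theorem \ref{thmmart}): the initial bound from $|Y_0|\le|X_0|$, the Hessian inequality in each smooth region, the concave kink across $\{|x|+|y|=1\}$, and the integrability bookkeeping ($u_1\le 1$ together with $u_1\ge -2|x|$ for the first inequality; $\|X\|_2<\infty$ plus $\|Y_t\|_2\le\|X_t\|_2$ for the second). (The lemma's $v_1,v_\infty$ are of course the $u_1,u_\infty$ defined just before it.)

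Two details as written would not survive scrutiny, though both are repairable from computations you already display. First, the selection $(a_\bullet,b_\bullet)=0$ on the exceptional sets makes your ``global first-order inequality'' false: for $u_1$ take an interface point with $y=0$, $|x|=1$, and $h=-x/2$, $k=0$; then $u_1(x+h,y+k)=-\tfrac14>-1=u_1(x,y)$. Use instead the gradient from the inner region at interface points (legitimate precisely because the kink is concave and the gradient jump is normal), or, cleaner for the mollification route, record the inequality only for a.e.\ $(x,y)$ --- both functions are locally Lipschitz, so the gradient exists a.e., and that is all the convolution step needs since $u^\delta_x,u^\delta_y$ are the convolutions of $u_x,u_y$. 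Second, for $\R^n$-valued martingales the condition ``quadratic form $\le 0$ whenever $|k|\le|h|$'' is not by itself enough to kill the matrix quadratic-variation term: Wang's approximation argument (the one this paper invokes in its other proofs) decomposes $\mbox{d}[(X,Y),(X,Y)]$ into rank-one pieces which need not individually satisfy $|k|\le|h|$, so one needs the quantitative form $\langle U_{xx}h,h\rangle+2\langle U_{xy}h,k\rangle+\langle U_{yy}k,k\rangle\le c(x,y)\,(|k|^2-|h|^2)$ for all $h,k$, with $c\ge 0$. Your region-by-region formulas give exactly this, with $c=2$ on $\{|x|+|y|\le 1\}$ for $u_1$ and on $\{|x|+|y|>1\}$ for $u_\infty$, and $c=0$ on the complementary regions; just state it in that form. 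Finally, in letting $m\to\infty$ for $u_1$ you need uniform integrability of the negative parts, i.e.\ of $\{|X_{t\wedge\sigma_m}|\}_m$; this follows from $X_{t\wedge\sigma_m}=\E[X_t\mid\F_{t\wedge\sigma_m}]$ and is worth making explicit, since the bound $u_1\le 1$ alone only yields the reverse Fatou direction.
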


We are ready to define the special function corresponding to the logarithmic inequality \eqref{LlogL}. Let $U:\R^n\times \R^n\to \R$ be given by
\begin{equation}\label{defU1}
 U(x,y)=\int_0^\infty a(\lambda)u_1(x/\lambda,y/\lambda)\mbox{d}\lambda+\frac{1}{2(K-1)},
\end{equation}
where 
$$ a(\lambda)=\frac{K}{2}\left(\frac{\lambda}{\lambda+1}\right)^2\chi_{[(K-1)^{-1},\infty)}(\lambda).$$
A computation shows that $U$ admits the following explicit formula: we have
$$ U(x,y)=\frac{K-1}{2}(|y|^2-|x|^2)+\frac{1}{2(K-1)}$$
if $|x|+|y|\leq (K-1)^{-1}$, and
$$ U(x,y)=K|y|+(K-1)(|x|+1)-K-K(|x|+1)\log\left[\frac{K-1}{K}(|x|+|y|+1)\right]$$
if $|x|+|y|>(K-1)^{-1}.$ 
We will establish the following majorization.

\begin{lemma}\label{maj_lemma1}
For any $(x,y)\in \R^n\times \R^n$ we have 
\begin{equation}\label{maj1}
U(x,y)\geq \max\left\{|y|,\frac{1}{2(K-1)}\right\}-K\Psi(|x|).
\end{equation}
\end{lemma}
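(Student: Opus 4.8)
The plan is to split \eqref{maj1} into the two separate inequalities
$$U(x,y)+K\Psi(|x|)\ge |y|\qquad\text{and}\qquad U(x,y)+K\Psi(|x|)\ge \tfrac{1}{2(K-1)},$$
which together are equivalent to the stated bound since $\max\{a,b\}$ is the larger of the two. Because $U$ and $\Psi$ depend on $x,y$ only through $|x|,|y|$, I would set $s=|x|\ge 0$, $u=|y|\ge 0$ and argue separately on the two branches of the explicit formula for $U$: the parabolic region $s+u\le (K-1)^{-1}$ and the logarithmic region $s+u>(K-1)^{-1}$. At the outset I would also record that the two branch formulas agree (both equal $(K-1)^{-1}-s$) on the dividing set $s+u=(K-1)^{-1}$, so $U$ is continuous there; this is what makes the reduction steps below legitimate.

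On the parabolic region $U=\frac{K-1}{2}(u^2-s^2)+\frac{1}{2(K-1)}$, and everything reduces to the single estimate $K\Psi(s)\ge \frac{K-1}{2}s^2$ valid for $0\le s\le (K-1)^{-1}$. I would prove this from $\Psi(s)\ge \frac{s^2}{2+s}$ (which follows in one line from $\log(1+s)\ge \frac{2s}{2+s}$, itself an easy derivative comparison) together with $(K-1)s\le 1$. Granting it, the second target inequality is immediate because the leftover $\frac{K-1}{2}u^2$ is nonnegative, while the first follows by writing
$$U(s,u)+K\Psi(s)-u=\frac{K-1}{2}\Bigl(u-\tfrac{1}{K-1}\Bigr)^2+\Bigl(K\Psi(s)-\tfrac{K-1}{2}s^2\Bigr)\ge 0,$$
the quadratic in $u$ having discriminant exactly $0$.

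On the logarithmic region I would substitute $w=s+1\ge 1$ and $z=\frac{(K-1)(s+u+1)}{K(s+1)}$; a short computation collapses $U(s,u)+K\Psi(s)-u$ to $Kw(z-1-\log z)$, which is $\ge 0$ because $z-1-\log z\ge 0$ for every $z>0$. That settles the first target inequality. For the second, note $\frac{\partial}{\partial u}\bigl(U+K\Psi(s)\bigr)=K-\frac{K-1}{z}$, which is nonnegative since $u\ge 0$ forces $z\ge \frac{K-1}{K}$; hence for fixed $s$ the minimum of $U+K\Psi(s)$ over admissible $u$ occurs at the smallest admissible $u$. That smallest value is either a boundary point $s+u=(K-1)^{-1}$, where the bound is already known from the parabolic case, or the point $u=0$ with $s\ge (K-1)^{-1}$, where $U+K\Psi(s)=(s+1)\bigl(K\log\frac{K}{K-1}-1\bigr)$; this is increasing in $s$, so the worst case is $s=(K-1)^{-1}$, and the required bound reduces to $K^2\log\frac{K}{K-1}\ge K+\frac12$, which follows from $K^2\log\frac{K}{K-1}=K+\frac12+\frac1{3K}+\cdots$.

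The one place needing a little care is this last point: the naive approach is to optimize $U+K\Psi(s)-\frac1{2(K-1)}$ over a genuinely two-dimensional region, which is unpleasant; the trick is to observe the monotonicity in $u$ (equivalently $z\ge \frac{K-1}{K}$ throughout the logarithmic region) so that the check collapses to the boundary curve and the $u=0$ axis, after which only one-variable estimates remain. Everything else is routine algebra and standard convexity-type facts for $\log$ and $\Psi$.
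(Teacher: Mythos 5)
Your argument is correct: I verified the branch formulas, the identity $U+K\Psi(|x|)-|y|=Kw(z-1-\log z)$ on the logarithmic branch (with your $w=|x|+1$, $z=\frac{(K-1)(|x|+|y|+1)}{K(|x|+1)}$), whose equality case $z=1$ is precisely the set $\mathcal{D}$ of \eqref{defD} needed later for sharpness, the completed square on the parabolic branch, the monotonicity $\frac{\partial}{\partial u}\big(U+K\Psi(s)\big)=\frac{Ku}{s+u+1}\ge 0$, and the closing estimate $K^2\log\frac{K}{K-1}\ge K+\tfrac12$. Your route is organized differently from the paper's. The paper splits according to the size of $|y|$: when $|y|\le\frac{1}{2(K-1)}$ it uses monotonicity of $U$ in $|y|$, read off from the integral representation \eqref{defU1}, to reduce to $y=0$, and then checks the same two one-variable facts you use, namely $K\Psi(s)\ge\frac{K-1}{2}s^2$ for $s\le\frac{1}{K-1}$ (proved there by noting the difference has vanishing value and derivative at $0$ and nonnegative second derivative, rather than via $\log(1+s)\ge\frac{2s}{2+s}$) and the affine bound $(s+1)\big(K\log\frac{K}{K-1}-1\big)\ge\frac{1}{2(K-1)}$ for $s\ge\frac{1}{K-1}$ (the paper's corresponding display carries a stray $-K$; your reduction is the correct one); when $|y|>\frac{1}{2(K-1)}$ it invokes convexity of $y\mapsto U(x,y)-y+K\Psi(x)$ together with a double zero at $y=(x+1)/(K-1)$. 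You instead prove the two inequalities behind the maximum globally and split by the branches of $U$, replacing the convexity/double-root step by a closed-form identity and deriving the monotonicity in $u$ from the explicit formula rather than from \eqref{defU1}. The paper's version is shorter where it leans on the integral representation; yours is entirely explicit, reduces the crux to $z-1-\log z\ge 0$, and makes the equality set visible at a glance. One small ordering point: the monotonicity of $s\mapsto (s+1)\big(K\log\frac{K}{K-1}-1\big)$ already requires $K\log\frac{K}{K-1}\ge 1$, which you justify only implicitly through the series stated afterwards; record that positivity first (e.g. from $\log(1+t)\ge \frac{t}{1+t}$) before invoking the monotonicity.
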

\begin{proof} Of course, it suffices to show the claim for $n=1$ and nonnegative $x$, $y$. 
Suppose first that $y\leq (2(K-1))^{-1}$. Note that for a fixed $x$, the function $y\mapsto u_1(x,y)$ is a nondecreasing on $[0,\infty)$ and hence, by \eqref{defU1}, $U$ also has this property. Therefore, we will be done if we show the majorization for $y=0$. If $x\leq 1/(K-1)$, the inequality takes the form
$ F(x)=-(K-1)x^2/2+K\Psi(x)\geq 0$.  This follows from 
$$ F(0)=F'(0+)=0 \quad \mbox{and}\quad F''(x)=-(K-1)+\frac{K}{x+1}\geq 0.$$
On the other hand, if $x>1/(K-1)$, the majorization is equivalent to
$$ (x+1)\left(K\log\frac{K}{K-1}-1\right)-K-\frac{1}{2(K-1)}\geq 0.$$
But the left-hand side is a nondecreasing function of $x$, and we have already proved the bound for $x=1/(K-1)$. This yields \eqref{maj1} for $y\leq (2(K-1))^{-1}$.

Now suppose that $y>(2(K-1))^{-1}$. It is easy to see that for a given $x\geq 0$, the function 
$ \xi_x(y)=U(x,y)-y+K\Psi(x)$ is convex on $[0,\infty)$ and satisfies 
$$\xi_x\left(\frac{x+1}{K-1}\right)=\xi_x'\left(\frac{x+1}{K-1}\right)=0.$$
This immediately yields the majorization. 
\end{proof}

Before we proceed, let us record here that both sides of \eqref{maj1} are equal on the set
\begin{equation}\label{defD}
 \mathcal{D}=\big\{(x,y):|y|=(|x|+1)/(K-1)\big\}.
\end{equation}
Later on, this fact will turn out to be useful.

\begin{proof}[Proof of \eqref{LlogL}] We may assume that $\E \Psi(|X_t|)<\infty$, since otherwise the claim is trivial. By Fubini's theorem and Lemma \ref{mainmar}, we see that
$$ \E U(X_t,Y_t)\leq \frac{1}{2(K-1)}.$$
Thus, an application of \eqref{maj1} yields 
$$ \E \max\left\{|Y_t|,\frac{1}{2(K-1)}\right\}\leq K\E \Psi(|X_t|)+\frac{1}{2(K-1)},$$ or, equivalently,
\begin{equation}\label{intin}
 \E \max\left\{|Y_t|-\frac{1}{2(K-1)},0\right\}\leq K\E \Psi(|X_t|).
\end{equation}
Now, for a given event $E\in \F$, let 
$$E^-=E\cap\{|Y_t|\leq (2(K-1))^{-1}\}\quad \mbox{ and  }\quad E^+=E\cap\{|Y_t|>(2(K-1))^{-1}\}.$$
We have $$\E |Y_t|1_{E^-}\leq \mathbb{P}(E^-)/(2(K-1))$$ and
$$ \E \left\{|Y_t|-\frac{1}{2(K-1)}\right\}1_{E^+}\leq \E \max\left\{|Y_t|-\frac{1}{2(K-1)},0\right\}\leq K\Psi(|X_t|).$$
Adding the last two inequalities yields \eqref{LlogL}.
\end{proof}

\begin{proof}[Sharpness]
We will show that the constant $1/(2(K-1))$ cannot be replaced by a smaller one, by picking $E=\Omega$ and considering the following one-dimensional example. Let $B=(B_t)_{t\geq 0}$ be a standard Brownian motion starting at $1/(2(K-1))$ and stopped upon exiting $[0,\infty)$. Consider the martingale  $D$ given by the stochastic integral
$$ D_t=\frac{1}{2(K-1)}+\int_0^t \operatorname*{sgn}B_s\;\mbox{d}B_s.$$
Then $D$ is differentially subordinate to $B$, since $[D,D]=[B,B]$. Directly from the definition, we see that if $(D,B)$ belongs to the first quadrant (i.e., $D>0$), then locally it moves along the line segment of slope $-1$; similarly, if $D<0$, then it evolves along the line of slope $+1$. Consequently, $(B,D)$ takes values in the set $$\mathcal{C}=\{(x,y):x\geq 0,\,x+|y|\geq 1/(2(K-1))\},$$ in the interior of which $U$ is of class $C^2$. Since 
$$U_{xx}(x,y)+2U_{xy}(x,y)\cdot \operatorname*{sgn}y+U_{yy}(x,y)=0\quad \mbox{and}\quad U_y(x,0)=0\quad \mbox{ on }\,\,\, \mathcal{C},$$
The It\^o-Tanaka formula implies  that the process $(U(B_t,D_t))_{t\geq 0}$ is a martingale. 

Recall now the set $\mathcal{D}$ given by \eqref{defD} and consider the stopping time
$$ \tau=\inf\{t\geq 0: (B_t,D_t)\in \mathcal{D}\}.$$
This stopping time is finite almost surely; in fact, it can be easily shown that $\E \tau^{p/2}<\infty$ for some $p>1$. Put $X_t=B_{t \wedge \tau}$ and $Y_t=D_{t \wedge \tau}$ for $t\geq 0$. Then $Y$ is differentially subordinate to $X$ and we have
$$ \E U(X_t,Y_t)=\E U(X_0,Y_0)=\frac{1}{2(K-1)}.$$
Since $\tau\in L^{p/2}$ and $U(x,y)\leq C(|x|^p+|y|^p+1)$ for some absolute constant $C$, we may let $t\to\infty$ to obtain
  $$ \E U(X_\infty,Y_\infty)=\frac{1}{2(K-1)}.$$
However, the terminal value $(X_\infty,Y_\infty)$ belongs to $\mathcal{D}$, and hence
 $$ U(X_\infty,Y_\infty)=|Y_\infty|-K\Psi(|X_\infty|),$$
 almost surely; see the end of the proof of Lemma \ref{maj_lemma1}. It suffices to plug this into the previous identity and use the equalities
$$ \sup_{t\geq 0}\E |Y_t|=\E |Y_\infty|,\qquad \sup_{t\geq 0}\E \Psi(|X_t|)=\E \Psi(|X_\infty|),$$
to get the desired sharpness.
\end{proof}


\subsection{Proof of Theorem \ref{weak_thm}--case $1<p\leq 2$} Here the reasoning is much more complicated. Let us first handle the simple case $p=2$. An application of Schwarz inequality gives
$$ \E |Y_t|1_E\leq ||Y_t||_2\mathbb{P}(E)\leq ||X||_2\mathbb{P}(E)^{1/2},$$
so \eqref{weak} follows. The sharpness is trivial. Pick $E=\Omega$ and $Y=X\equiv 1$ to see that both sides are equal. 

From now on, we  assume that $1<p<2$. Consider the function 
\begin{equation}\label{gamma}
 \gamma(t)=\exp(pt^{p-1})\int_t^\infty \exp(-ps^{p-1})\mbox{d}s,\qquad t\geq 0.
 \end{equation}

\begin{lemma}\label{PropU}
The function $\gamma$ has the following properties.

(i)  We have
$$\gamma(0)=p^{-1/(p-1)}\Gamma\left(\frac{p}{p-1}\right).$$

(ii) It satisfies the differential equation
\begin{equation}\label{differ}
1+\gamma'(t)=p(p-1)t^{p-2}\gamma(t).
\end{equation}

(iii) It is concave, nondecreasing and satisfies $\gamma'(t)\to 0$, as $t\to\infty$.
\end{lemma}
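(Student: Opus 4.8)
The plan is to dispatch (i) and (ii) by direct computation, and then to reduce \emph{all} of (iii) to a single monotonicity statement about a reparametrized integral.

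For (i): at $t=0$ the exponential prefactor equals $1$, so $\gamma(0)=\int_0^\infty e^{-ps^{p-1}}\,ds$; the substitution $u=ps^{p-1}$ turns this into $\tfrac{1}{p-1}\,p^{-1/(p-1)}\Gamma\!\big(\tfrac{1}{p-1}\big)$, and the functional equation $\Gamma\!\big(\tfrac{p}{p-1}\big)=\tfrac{1}{p-1}\Gamma\!\big(\tfrac{1}{p-1}\big)$ gives the stated form. For (ii): differentiate the product $\gamma(t)=e^{pt^{p-1}}\int_t^\infty e^{-ps^{p-1}}\,ds$ using the chain rule on the prefactor and the fundamental theorem of calculus on the integral; the factor $e^{pt^{p-1}}\cdot e^{-pt^{p-1}}$ collapses to $1$ and one reads off $\gamma'(t)=p(p-1)t^{p-2}\gamma(t)-1$, valid on $(0,\infty)$ (where $\gamma$ is smooth; at $t=0$ one has an infinite right derivative, so (ii) is understood for $t>0$).

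For (iii) the key step is the change of variables $s^{p-1}=t^{p-1}+r$ (a $C^1$ increasing bijection of $[t,\infty)$ onto $[t^{p-1},\infty)$ for $t>0$), which yields the clean representation
\begin{equation*}
\gamma(t)=\frac{1}{p-1}\int_0^\infty e^{-pr}\,(t^{p-1}+r)^{(2-p)/(p-1)}\,dr .
\end{equation*}
Set $h(t)=p(p-1)t^{p-2}\gamma(t)$, so that (ii) reads $\gamma'(t)=h(t)-1$. Using $t^{p-2}(t^{p-1})^{(2-p)/(p-1)}=1$ one rewrites
\begin{equation*}
h(t)=p\int_0^\infty e^{-pr}\,\big(1+r\,t^{1-p}\big)^{(2-p)/(p-1)}\,dr .
\end{equation*}
Now, since $1<p<2$: the exponent $(2-p)/(p-1)$ is positive, and for each fixed $r>0$ the map $t\mapsto 1+r\,t^{1-p}$ is decreasing; hence the integrand decreases in $t$ and $h$ is nonincreasing. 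Dominating the integrand for $t\ge1$ by the integrable function $e^{-pr}(1+r)^{(2-p)/(p-1)}$ and letting $t\to\infty$, dominated convergence gives $h(t)\to p\int_0^\infty e^{-pr}\,dr=1$. Combining: $h\ge1$ everywhere (a nonincreasing function converging to $1$ stays $\ge1$), so $\gamma'=h-1\ge0$ and $\gamma$ is nondecreasing; $h$ nonincreasing forces $\gamma'=h-1$ nonincreasing, i.e.\ $\gamma$ is concave; and $h(t)\to1$ yields $\gamma'(t)\to0$. Concavity and monotonicity extend to $t=0$ by continuity of $\gamma$ there.

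The only real obstacle is locating the change of variables that makes $h$ manifestly monotone. A naive attempt to prove $\gamma''\le0$ directly, or to show $t^{p-2}\gamma(t)$ is nonincreasing by differentiating, leads straight back to the inequality one is trying to prove (it is equivalent to $t\gamma'(t)\le(2-p)\gamma(t)$). The substitution $s^{p-1}=t^{p-1}+r$ breaks this circularity by exposing the $t$-dependence solely through the factor $(1+rt^{1-p})^{(2-p)/(p-1)}$, which is transparently decreasing; after that, everything is bookkeeping plus one application of dominated convergence. One should stay mildly careful about signs (each of $p-1$, $2-p$, $(2-p)/(p-1)$ is positive while $p-2$ and $1-p$ are negative) and about the infinite right derivative of $\gamma$ at $0$.
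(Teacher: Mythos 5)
Your parts (i) and (ii) coincide with the paper's: the same substitution $u=ps^{p-1}$ plus the functional equation of $\Gamma$, and the same direct differentiation. For (iii), however, you take a genuinely different route, and it is correct. The paper keeps working with the ODE itself: it rewrites concavity as $\big(p(p-1)t^{p-1}+p-2\big)\gamma(t)\le t$, introduces the auxiliary function $F(t)=\frac{te^{-pt^{p-1}}}{p(p-1)t^{p-1}+p-2}-\int_t^\infty e^{-ps^{p-1}}\,ds$, shows $F'\le 0$ and $F(t)\to 0$ to conclude $F\ge 0$, and only then deduces monotonicity (from concavity plus $\gamma\ge 0$) and $\gamma'(t)\to 0$ (from \eqref{differ}). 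You instead reparametrize via $s^{p-1}=t^{p-1}+r$ to get $\gamma(t)=\frac{1}{p-1}\int_0^\infty e^{-pr}(t^{p-1}+r)^{(2-p)/(p-1)}\,dr$, so that $h(t)=1+\gamma'(t)=p\int_0^\infty e^{-pr}\bigl(1+rt^{1-p}\bigr)^{(2-p)/(p-1)}\,dr$ is manifestly nonincreasing in $t$ (exponent positive, $1-p<0$) with limit $1$ by dominated convergence; all three assertions of (iii) then drop out simultaneously: $h\ge 1$ gives $\gamma'\ge 0$, $h$ nonincreasing gives concavity, and $h\to 1$ gives $\gamma'\to 0$. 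Your approach buys a unified, sign-chase-free argument in which the $t$-dependence is isolated in one visibly monotone factor (and your $t=0$ evaluation of the new representation even re-derives (i) as a consistency check); the paper's approach avoids any change of variables and stays entirely within elementary manipulations of \eqref{differ}, at the cost of the case split on the sign of $p(p-1)t^{p-1}+p-2$ and the auxiliary function $F$. Your caveats (the identity (ii) read on $(0,\infty)$ with infinite right derivative at $0$, and extending concavity/monotonicity to $t=0$ by continuity) are handled correctly.
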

\begin{proof}
To compute $\gamma(0)$, simply substitute $r=ps^{p-1}$ under the integral. The condition (ii) follows from the direct differentiation. In view of \eqref{differ}, the concavity of $\gamma$ is equivalent to the estimate $ (p-2)\gamma(t)+\gamma'(t)<0.$ Applying  \eqref{differ} again, we rewrite the inequality in the form
\begin{equation}\label{rewr}
 \big(p(p-1)t^{p-1}+p-2\big)\gamma(t)\leq t.
\end{equation}
This is obvious if $p(p-1)t^{p-1}+p-2\leq 0$, so assume that the reverse estimate holds. 
Plugging the formula in \eqref{gamma} for $\gamma$, \eqref{rewr} can  be stated as
$$ F(t)=\frac{te^{-pt^{p-1}}}{p(p-1)t^{p-1}+p-2}-\int_t^\infty e^{-ps^{p-1}}\mbox{d}s\geq 0.$$
Now we compute that under our assumption that $1<p<2$, 
$$ F'(t)=\frac{(p-1)(p-2)e^{-pt^{p-1}}}{(p(p-1)t^{p-1}+p-2)^2}\leq 0.$$
It suffices to note that $F(t)\to 0$ when $t\to\infty$; thus, $F$ is nonnegative and $\gamma$ is concave. This automatically implies the remaining properties given in (iii): the first of them follows from the fact that $\gamma\geq 0$, while the convergence $\lim_{t\to\infty}\gamma'(t)=0$ is a consequence of 
\eqref{differ}.
\end{proof}

Next, let $H:[\gamma(0),\infty) \to [0,\infty)$ be the inverse to the function $t \mapsto t+\gamma(t)$. 
 To define the special function $U$ corresponding to \eqref{weak}, introduce the kernel
 $$ \alpha(\lambda)=\frac{1}{2}\gamma(H(\lambda))^{-2}\gamma'(H(\lambda))H'(\lambda)\lambda^2\chi_{[\gamma(0),\infty)}(\lambda)$$
and let
$$ U(x,y)=\int_{0}^\infty \alpha(\lambda) u_1(x/\lambda,y/\lambda) \mbox{d}\lambda+\frac{\gamma(0)}{2}.$$
Let us derive the explicit formula for $U$.

\begin{lemma}
We have
\begin{equation}\label{fU1} 
 U(x,y)= \frac{|y|^2-|x|^2}{2\gamma(0)}+\frac{\gamma(0)}{2}
\end{equation} if $|x|+|y|\leq \gamma(0)$, and
\begin{equation}\label{fU2}
\begin{split}
U(x,y)=& |y|-H(|x|+|y|)^p-pH(|x|+|y|)^{p-1}\big(|x|-H(|x|+|y|)\big),
\end{split}
\end{equation} if $|x|+|y|>\gamma(0)$.
\end{lemma}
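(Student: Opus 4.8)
The plan is to compute the defining integral for $U$ explicitly. Since both $u_1$ and $U$ depend on $(x,y)$ only through $|x|$ and $|y|$, we may assume $n=1$ and $x,y\ge 0$; write $s=x$, $t=y$, $r=s+t$. Because $\alpha$ is supported in $[\gamma(0),\infty)$, the integral runs over $\lambda\ge\gamma(0)$, and $u_1(x/\lambda,y/\lambda)$ equals $(t^2-s^2)/\lambda^2$ for $\lambda\ge r$ and $1-2s/\lambda$ for $\lambda<r$. Hence the integral must be split at $\lambda=r$, which only matters when $r>\gamma(0)$; this dichotomy produces exactly the two cases \eqref{fU1} and \eqref{fU2}.

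The computational engine is the change of variables $u=H(\lambda)$, i.e.\ $\lambda=u+\gamma(u)$, under which $\mathrm{d}\lambda=(1+\gamma'(u))\,\mathrm{d}u$; combined with the differential equation \eqref{differ} this gives $\mathrm{d}\lambda=p(p-1)u^{p-2}\gamma(u)\,\mathrm{d}u$, so $\mathrm{d}\lambda/\gamma(H(\lambda))=p(p-1)u^{p-2}\,\mathrm{d}u$ and therefore $\int \mathrm{d}\lambda/\gamma(H(\lambda))=pH(\lambda)^{p-1}+C$. From the formula for $\alpha$ one reads off the antiderivative identities $\alpha(\lambda)/\lambda^2=\tfrac12\,\tfrac{d}{d\lambda}[-1/\gamma(H(\lambda))]$, $\alpha(\lambda)/\lambda=\tfrac{d}{d\lambda}[-\lambda/(2\gamma(H(\lambda)))]+1/(2\gamma(H(\lambda)))$, and $\alpha(\lambda)=\tfrac{d}{d\lambda}[-\lambda^2/(2\gamma(H(\lambda)))]+\lambda/\gamma(H(\lambda))$, so every integral of $\alpha$, $\alpha/\lambda$, $\alpha/\lambda^2$ reduces to boundary terms plus a multiple of $\int\mathrm{d}\lambda/\gamma(H(\lambda))$. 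One also needs $\gamma(u)\to\infty$ as $u\to\infty$; this follows from \eqref{differ} and $\gamma'(t)\to0$ (which force $\gamma(u)\sim\tfrac{1}{p(p-1)}u^{2-p}$ with $2-p>0$), and it guarantees both that the improper integrals converge and that $1/\gamma(H(\lambda))\to0$ as $\lambda\to\infty$; note also $H(\gamma(0))=0$.

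For $r\le\gamma(0)$ only the regime $\lambda\ge r$ occurs, so $U(x,y)=(t^2-s^2)\int_{\gamma(0)}^\infty\alpha(\lambda)\lambda^{-2}\,\mathrm{d}\lambda+\gamma(0)/2$, and the first antiderivative identity evaluates the integral to $\tfrac12(0+1/\gamma(0))=1/(2\gamma(0))$, which is \eqref{fU1}. For $r>\gamma(0)$, set $w=H(r)$ and split $U=\gamma(0)/2+\int_{\gamma(0)}^r\alpha(1-2s/\lambda)\,\mathrm{d}\lambda+(t^2-s^2)\int_r^\infty\alpha\lambda^{-2}\,\mathrm{d}\lambda$. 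Using the three identities and the substitution (with $\int_0^w p(p-1)u^{p-1}\,\mathrm{d}u=(p-1)w^p$ and $\int_0^w(1+\gamma'(u))\,\mathrm{d}u=r-\gamma(0)$) one gets $\int_{\gamma(0)}^r\alpha\,\mathrm{d}\lambda=-\tfrac{r^2}{2\gamma(w)}-\tfrac{\gamma(0)}{2}+(p-1)w^p+r$, $\int_{\gamma(0)}^r\alpha\lambda^{-1}\,\mathrm{d}\lambda=-\tfrac{r}{2\gamma(w)}+\tfrac12+\tfrac{p}{2}w^{p-1}$, and $\int_r^\infty\alpha\lambda^{-2}\,\mathrm{d}\lambda=\tfrac{1}{2\gamma(w)}$. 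Adding everything, the $\gamma(0)$ terms cancel and the coefficient of $1/(2\gamma(w))$ is $-r^2+2sr+t^2-s^2=-(r-s)^2+t^2=0$ since $r-s=t$; what remains is $t+(p-1)w^p-psw^{p-1}$, which, writing $w=H(s+t)$ and rearranging, is precisely \eqref{fU2}.

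The main obstacle here is bookkeeping: carrying the three integrals through the change of variables without sign errors and confirming that the $\gamma(0)$- and $\gamma(w)$-terms cancel. The only genuine analytic point is the convergence of the improper integrals, which rests entirely on the asymptotic $\gamma(u)\to\infty$ extracted from \eqref{differ}; everything else is calculus.
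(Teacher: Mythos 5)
Your computation is correct, and it diverges from the paper's proof in how the second regime is handled. For $|x|+|y|\le\gamma(0)$ you do exactly what the paper does: recognize $\alpha(\lambda)\lambda^{-2}$ as $\tfrac12\bigl[-1/\gamma(H(\lambda))\bigr]'$ and evaluate the boundary terms, using $H(\gamma(0))=0$ and $\gamma(H(\lambda))\to\infty$. For $|x|+|y|>\gamma(0)$, however, the paper does not evaluate the integral at all: it differentiates both the integral representation and the candidate formula \eqref{fU2} in $y$, shows the derivatives agree (via $H'=1/(1+\gamma')=1/(p(p-1)H^{p-2}\gamma(H))$), and relies implicitly on agreement at the interface $|x|+|y|=\gamma(0)$ to conclude. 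You instead push the substitution $u=H(\lambda)$ together with \eqref{differ} through all three integrals ($\alpha$, $\alpha/\lambda$, $\alpha/\lambda^2$) and verify directly that the $\gamma(0)$-terms and the $1/(2\gamma(w))$-terms cancel, leaving $t+(p-1)w^p-psw^{p-1}$, which is \eqref{fU2} after rearrangement; I checked the three antiderivative identities, the values of the three integrals, and the cancellation $-r^2+2sr+t^2-s^2=0$, and they are all right. Your route costs more bookkeeping but is self-contained: it dispenses with the anchoring step that the paper's derivative-matching argument leaves tacit, and you also make explicit the fact $\gamma(u)\to\infty$ (forced by \eqref{differ} and $\gamma'\to0$, since $2-p>0$) that both proofs need for the boundary term at infinity.
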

\begin{proof} Of course, it suffices to prove the formula for $n=1$ and nonnegative  $x$, $y$. 
If $x+y\leq \gamma(0)$, then
$$ U(x,y)=\frac{y^2-x^2}{2}\int_{\gamma(0)}^\infty \left[-\frac{1}{\gamma(H(\lambda))}\right]'\mbox{d}\lambda+\frac{\gamma(0)}{2}=\frac{y^2-x^2}{2\gamma(0)}+\frac{\gamma(0)}{2}.$$
To prove \eqref{fU2} for $x+y>\gamma(0)$, it suffices to show that both sides have the same partial derivatives with respect to $y$. We have
$$ U(x,y)=(y^2-x^2)\int_{x+y}^\infty a(\lambda)\lambda^{-2}\mbox{d}\lambda+\int_{\gamma(0)}^{x+y}a(\lambda)(1-2x/\lambda)\mbox{d}\lambda+\frac{\gamma(0)}{2},$$
so
$$ U_y(x,y)=2y\int_{x+y}^\infty a(\lambda)\lambda^{-2}\mbox{d}\lambda=y\int_{x+y}^\infty \left[-\frac{1}{\gamma(H(\lambda))}\right]'\mbox{d}\lambda=\frac{y}{\gamma(H(x+y))}.$$
On the other hand, the $y$-derivative of the right-hand side of \eqref{fU2} equals
$$ 1+p(p-1)H(x+y)^{p-2}H'(x+y)(H(x+y)-x).$$
But, by the very definition of $H$ and $\gamma$, we have
$$ H'(x+y)=\frac{1}{1+\gamma'(H(x+y))}=\frac{1}{p(p-1)H(x+y)^{p-2}\gamma(H(x+y))},$$
so the derivative equals
$$ 1+\frac{H(x+y)-x}{\gamma(H(x+y))}=\frac{\gamma(H(x+y))+H(x+y)-x}{\gamma(H(x+y))}=\frac{y}{\gamma(H(x+y))}=U_y(x,y).$$
This completes the proof.
 \end{proof}
We turn to the majorization property.

\begin{lemma}\label{lemma_maj2}
For any $(x,y)\in\R^n\times \R^n$ we have
\begin{equation}\label{maj2}
U(x,y)\geq \max\left\{|y|,\frac{\gamma(0)}{2}\right\}-|x|^p.
\end{equation}
\end{lemma}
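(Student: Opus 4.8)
The plan is to reduce to the scalar case and then exploit the two regimes in which $U$ was defined, mirroring the structure of the proof of Lemma~\ref{maj_lemma1}. Since $U$ depends on $(x,y)$ only through $(|x|,|y|)$ and since, for fixed $|x|$, the function $|y|\mapsto u_1(x/\lambda,y/\lambda)$ is nondecreasing on $[0,\infty)$ (it is nondecreasing in $|y|$ before $(x,y)$ leaves the unit ball and constant afterwards), the function $U$ is nondecreasing in $|y|$ as well; this is the same monotonicity used in the previous majorization lemma. Hence it suffices to take $n=1$ and $x,y\geq 0$, and I would split according to the sign of $y-\gamma(0)/2$ exactly as before.

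First I would treat the case $y\leq \gamma(0)/2$. Here the right-hand side of \eqref{maj2} equals $\gamma(0)/2-x^p$, and by the monotonicity in $y$ it is enough to establish $U(x,0)\geq \gamma(0)/2-x^p$. If $x\leq\gamma(0)$ this reads $-x^2/(2\gamma(0))+\gamma(0)/2\geq\gamma(0)/2-x^p$, i.e.\ $x^p\geq x^2/(2\gamma(0))$, which holds because $x\leq\gamma(0)<2\gamma(0)$ and $1<p<2$ force $x^{2-p}\leq\gamma(0)^{2-p}\leq (2\gamma(0))\cdot x^{0}$ after a one-line manipulation — really the point is $x^{2-p}/(2\gamma(0))\le 1$ on $[0,\gamma(0)]$, which I would verify directly. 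If $x>\gamma(0)$, then using \eqref{fU2} with $y=0$ and writing $h=H(x)$ (so $x=h+\gamma(h)$) the inequality becomes, after rearranging, $-h^p-ph^{p-1}(x-h)\geq \gamma(0)/2-x^p$, i.e.\ $x^p-h^p-ph^{p-1}(x-h)\geq \gamma(0)/2$; the left side is $\geq 0$ by convexity of $t\mapsto t^p$, but I need the quantitative bound $\gamma(0)/2$, so I would show the left side is a nondecreasing function of $x$ (equivalently of $h$) along the curve $x=h+\gamma(h)$ and check the boundary value at $x=\gamma(0)$ (where $h=0$), reducing to the already-handled endpoint of the first subcase.

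For the case $y>\gamma(0)/2$ I would argue as in Lemma~\ref{maj_lemma1}: fix $x\geq 0$ and consider $\xi_x(y)=U(x,y)-y+x^p$. Using the explicit formulas \eqref{fU1}--\eqref{fU2} and the identity $H'(x+y)=\bigl(1+\gamma'(H(x+y))\bigr)^{-1}=\bigl(p(p-1)H(x+y)^{p-2}\gamma(H(x+y))\bigr)^{-1}$ already derived, I expect $\xi_x$ to be convex on $[0,\infty)$ with $\xi_x$ and $\xi_x'$ vanishing at the point $y_0$ where $|x|+|y_0|$ corresponds to $H(x+y_0)=x$ (equivalently $y_0=\gamma(x^{\phantom{p}}\!\text{-preimage})$, the analogue of $(x+1)/(K-1)$ before); convexity plus the double zero gives $\xi_x\geq 0$, which is precisely \eqref{maj2} on this range once one also checks $\gamma(0)/2\le y$ there. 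Convexity of $\xi_x$ in $y$ will follow from $U_{yy}\geq 0$, which from $U_y(x,y)=y/\gamma(H(x+y))$ amounts to controlling the sign of $\gamma(H)-y\gamma'(H)H'$, and here the concavity and monotonicity of $\gamma$ from Lemma~\ref{PropU}(iii) do the work.

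The main obstacle I anticipate is the first case with $x>\gamma(0)$: unlike the logarithmic inequality, where the corresponding one-variable reduction was genuinely elementary, here one must extract the sharp additive constant $\gamma(0)/2$ from the convexity gap $x^p-h^p-ph^{p-1}(x-h)$ evaluated along the implicitly defined curve $x=h+\gamma(h)$. Establishing that this gap is monotone in $h$ will require differentiating and invoking $1+\gamma'(h)=p(p-1)h^{p-2}\gamma(h)$ from Lemma~\ref{PropU}(ii) to simplify, and keeping track of signs for $1<p<2$ is where the calculation is most delicate; everything else is bookkeeping with the explicit formulas and the properties of $\gamma$ and $H$ already in hand.
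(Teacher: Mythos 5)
Your reduction to $n=1$, $x,y\ge 0$, the split at $y=\gamma(0)/2$, and your treatment of the regime $y>\gamma(0)/2$ are sound: convexity of $y\mapsto U(x,y)-y+x^p$ (which does follow, via $U_y(x,y)=y/\gamma(H(x+y))$, the bound $h\gamma'(h)\le\gamma(h)$ coming from concavity of $\gamma$, and $y\le x+y$) together with the double zero at $y_0=\gamma(x)$ gives $U(x,y)\ge y-x^p$ for all $y$. The paper reaches the same inequality more quickly: for $x+y\ge\gamma(0)$ it is literally the tangent-line inequality $x^p-H(x+y)^p\ge pH(x+y)^{p-1}\bigl(x-H(x+y)\bigr)$, and for $x+y<\gamma(0)$ a one-line monotonicity remark suffices; so your route is valid but heavier than needed. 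Also, the check you postpone in the first case ($\gamma(0)^{2-p}\le 2\gamma(0)$, i.e. $\gamma(0)^{p-1}\ge 1/2$) genuinely requires the explicit value from Lemma \ref{PropU} (i): it reduces to $\Gamma\bigl(p/(p-1)\bigr)^{p-1}\ge p/2$, true because the left side is at least $1$ and $p<2$.

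The genuine gap is the subcase $y=0$, $x>\gamma(0)$. You propose to show that $G(h)=(h+\gamma(h))^p-h^p-ph^{p-1}\gamma(h)$ is nondecreasing along the curve $x=h+\gamma(h)$ and then evaluate at $h=0$. Differentiating and using \eqref{differ} gives $G'(h)=\bigl(1+\gamma'(h)\bigr)\bigl[p\bigl((h+\gamma(h))^{p-1}-h^{p-1}\bigr)-1\bigr]$, and the bracket tends to $0$ as $h\to\infty$ (since $\gamma(h)\sim h^{2-p}/(p(p-1))$, one gets $(h+\gamma(h))^{p-1}-h^{p-1}\to 1/p$), so its nonnegativity for every $h$ is itself a delicate global inequality about $\gamma$ with no room to spare; you offer no argument for it, and this is precisely where the sharp constant $\gamma(0)/2$ is extracted. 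Monotonicity of $G$ appears to be true numerically, but it is strictly stronger than what is needed and is not obviously within reach of the listed tools. The paper avoids it: it checks $G(t)\ge\gamma(0)/2$ for large $t$, and at any interior critical point $t_0$ the equation $G'(t_0)=0$, i.e. $(t_0+\gamma(t_0))^{p-1}-t_0^{p-1}=1/p$, combined with \eqref{differ} collapses the value to $G(t_0)=p^{-1}\bigl(\gamma(t_0)-t_0\gamma'(t_0)\bigr)\ge\gamma(0)/p\ge\gamma(0)/2$ by concavity of $\gamma$. Either adopt that critical-point argument or supply a proof of $p\bigl((h+\gamma(h))^{p-1}-h^{p-1}\bigr)\ge 1$; as written, the key step of the lemma is asserted rather than proved.
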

\begin{proof}
Again, we may assume that $n=1$ and $x,\,y\geq 0$. We split the reasoning into two parts.

\smallskip

\emph{The case $y\leq \gamma(0)/2$}. Arguing as above,  it suffices to show the majorization for $y=0$. If $x\leq \gamma(0)$, the inequality is equivalent to $ x^{2-p}\leq 2\gamma(0)$ and thus it is enough to check it for $x=\gamma(0)$. By Lemma \ref{PropU} (i), this is equivalent to
$$ \Gamma\left(\frac{p}{p-1}\right)^{p-1}\geq \frac{p}{2}.$$
This inequality is true, since the left-hand side is at least $1$, while the right-hand side does not exceed $1$. Now, assume that $x>\gamma(0)$ (and still, $y=0$). The inequality \eqref{maj2} reads
$$ -H(x)^p-pH(x)^{p-1}\big(x-H(x)\big)\geq \frac{\gamma(0)}{2}-x^p,$$
or, after the substitution $x=t+\gamma(t)$, $t\geq 0$,
\begin{equation}\label{intr}
 G(t):=(t+\gamma(t))^p-t^p-pt^{p-1}\gamma(t)\geq \frac{\gamma(0)}{2}.
\end{equation}
This is true for sufficiently large $t$; indeed, by the mean-value property, \eqref{differ} and Lemma \ref{PropU} (iii),
\begin{align*}
 G(t)&\geq {p(p-1)}(t+\gamma(t))^{p-2}\gamma(t)^2/2\\
 &=\frac{\gamma(t)}{2}\cdot p(p-1)t^{p-2}\gamma(t)\cdot \left(\frac{t+\gamma(t)}{t}\right)^{p-2}\\
 &>\frac{\gamma(0)}{2}\cdot \left(\frac{t+\gamma(t)}{t}\right)^{p-2}\geq \frac{\gamma(0)}{2},
\end{align*}
provided $t$ is large enough. Thus, if \eqref{intr} does not hold for all $t$, then there must be $t_0>\gamma(0)$ such that $G(t_0)<0$ and $G'(t_0)=0$. The latter equality is equivalent to
$$ (t_0+\gamma(t_0))^{p-1}-t_0^{p-1}=1/p,$$
and then, by \eqref{differ},
\begin{align*}
 G(t_0)&=(t_0+\gamma(t_0))(t_0^{p-1}+1/p)-t_0^p-pt_0^{p-1}\gamma(t_0)\\
 &=p^{-1}\left[t_0+\gamma(t_0)-p(p-1)t_0^{p-1}\gamma(t_0)\right]\\
 &=p^{-1}(\gamma(t_0)-t_0\gamma'(t_0)).
\end{align*}
It suffices to note that $\gamma(t_0)-t_0\gamma'(t_0)\geq \gamma(0)>0$, in view of the concavity of $\gamma$. This implies $G(t_0)>0$, a contradiction. This proves the majorization for $y\leq \gamma(0)/2$.

\smallskip

\emph{The case $y>\gamma(0)/2$}. This is much simpler. It suffices to focus on the majorization for $x+y\geq \gamma(0)$. Indeed, if the reverse inequality holds true, we rewrite \eqref{maj2} in the form
$$ \frac{y^2-x^2}{2\gamma(0)}+\frac{\gamma(0)}{2}-y+x^p\geq 0$$
and note that the left hand side decreases as $y$ increases. If $x+y\geq \gamma(0)$, the majorization reads
$$ x^p-H(x+y)^p\geq pH(x+y)^{p-1}(x-H(x+y)),$$
which follows immediately from the mean-value property. In particular, let us observe here that if $y=\gamma(x)$, then both sides of \eqref{maj2} are equal (then $x=H(x+y)$). This will be important for us later, in the proof of the sharpness.
\end{proof}

\begin{proof}[Proof of \eqref{weak}]
It suffices to show the assertion under the assumption $||X||_p<\infty$, since otherwise the bound is obvious. By Lemma \ref{mainmar}, the formula for $U$ and Fubini's theorem, we obtain
 $ \E U(X_t,Y_t)\leq {\gamma(0)}/{2}$ for all $t\geq 0$. Combining this with \eqref{maj2} yields
$ \E \max\{|Y_t|,\gamma(0)/2\}\leq \E |X_t|^p+\gamma(0)/2,$ or
$$ \E \max\left\{|Y_t|-\frac{\gamma(0)}{2},0\right\}\leq \E |X_t|^p.$$
Now pick an arbitrary event $E\in \F$ and consider its splitting into the sets
$$ E^-=E\cap\{|Y_t|\leq \gamma(0)/2\},\qquad E^+=E\cap\{|Y_t|>\gamma(0)/2\}.$$
We have $\E |Y_t|1_{E^-}\leq \mathbb{P}(E^-)\cdot\gamma(0)/2$ and
$$ \E \left\{|Y_t|-\frac{\gamma(0)}{2}\right\}1_{E^+}\leq \E \max\left\{|Y_t|-\frac{\gamma(0)}{2},0\right\}\leq \E |X_t|^p.$$
Adding the last two inequalities yields 
$$ \E |Y_t|1_E\leq ||X||_p^p+\gamma(0)\mathbb{P}(E)/2.$$
Now fix $\lambda>0$ and apply this estimate to a new martingale pair $X/\lambda$, $Y/\lambda$. Clearly, the differential subordination is preserved, so the use of the bound is permitted and we obtain
$$ \E |Y_t|1_E\leq \lambda^{1-p}||X||_p^p+\lambda\gamma(0)\mathbb{P}(E)/2.$$
A straightforward analysis shows that as a function of $\lambda$, the right hand side attains its minimum for
$$ \lambda=\left(\frac{2(p-1)||X||_p^p}{\gamma(0)\mathbb{P}(E)}\right)^{1/p},$$
and, plugging the formula for $\gamma(0)$ (see Lemma \ref{PropU} (i)), we obtain the bound
$$ \E |Y_t|1_E\leq \left(\frac{1}{2}\Gamma\left(\frac{2p-1}{p-1}\right)\right)^{1-1/p}||X||_p\mathbb{P}(E)^{1-1/p}.$$
Taking the supremum over $t\geq 0$ completes the proof.
\end{proof}

\begin{proof}[Sharpness] The reasoning is similar to that concerning the logarithmic bound. We will construct an example for which both sides of \eqref{weak} are equal with $E=\Omega$. Let $B$ be a standard Brownian motion starting from $\gamma(0)/2$ and stopped at the exit time from $[0,\infty)$, and let 
$$ D_t=\frac{\gamma(0)}{2}+\int_0^t \operatorname*{sgn}B_s\;\mbox{d}B_s.$$
We easily check that for all $t\geq 0$ we have $|B_t|+|D_t|\geq \gamma(0)$. Introduce the stopping time $ \tau=\inf\{t\geq 0: |D_t|=\gamma(B_t)\}$; it is easy to check that $\tau\in L^{p/2}$ for some $p>1$ (actually, one can show that $\tau\in L^{p/2}$ for all $p<\infty$, but we will not need this). Consider the martingales $ X=(B_{\tau\wedge t})_{t\geq 0}$, $Y=(D_{\tau\wedge t})_{t\geq 0}.$  
Since $U$ is of class $C^2$ on the set $\{(x,y):x\geq 0,x+y\geq \gamma(0)\}$ and satisfies 
$$U_{xx}(x,y)+2U_{xy}(x,y)\cdot \operatorname*{sgn}y+U_{yy}(x,y)=0\quad \mbox{and}\quad U_y(x,0)=0$$
on this set, a combination of It\^o-Tanaka formula and a limiting argument yields
$$ \E U(X_\infty,Y_\infty)=\E U(X_0,Y_0)=\frac{\gamma(0)}{2}.$$
However, we have $U(x,\pm\gamma(x))=\gamma(x)-x^p$ for $x\geq 0$: see the end of the proof of Lemma \ref{lemma_maj2}. Since $|Y_\infty|=\gamma(X_\infty)$ almost surely, we obtain
$$ \E |Y_\infty|=\E |X_\infty|^p+\frac{\gamma(0)}{2}.$$
Thus, by Young's inequality,
\begin{align*}
 C_p||X||_p&=p^{1/p}||X_\infty||_p\cdot \left(\frac{p}{p-1}\cdot\frac{\gamma(0)}{2}\right)^{1-1/p}\\
 &\leq ||X_\infty||_p^p+\frac{\gamma(0)}{2}=\E |Y_\infty|
\end{align*}
and both sides of \eqref{weak} must be equal. This completes the proof.
\end{proof}

\subsection{Proof of Theorem \ref{weak_thm}--case  $2<p<\infty$} This time the reasoning is much simpler. The special function is given by the formula
$$ U(x,y)=\frac{p^p(p-1)^{2-p}(p-2)}{4}\int_0^{1-p^{-1}} \lambda^{p-1}u_\infty(x/\lambda,y/\lambda)\mbox{d}\lambda,$$
We easily compute that
$$ U(x,y)=\frac{1}{2}\left(\frac{p}{p-1}\right)^{p-1}(|y|-(p-1)|x|)(|x|+|y|)^{p-1},$$
if $|x|+|y|\leq 1-p^{-1}$, while for remaining $(x,y)$,
$$ U(x,y)=\frac{p^2}{4}\left[|y|^2-|x|^2-\frac{2(p-2)|y|}{p}+\frac{(p-1)^2(p-2)}{p^3}\right].$$ 
We have the following majorization.

\begin{lemma}\label{maj_lemma3}
For any $(x,y)\in \R^n\times \R^n$ we have 
\begin{equation}\label{maj3}
U(x,y)\geq p\max\left\{|y|-1+\frac{1}{p},0\right\}-\frac{p^{p-1}}{2}|x|^p.
\end{equation}
\end{lemma}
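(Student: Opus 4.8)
Since both sides depend only on $|x|$ and $|y|$, I would first reduce to $n=1$ with $x,y\ge 0$, so that the claim becomes $G(x,y)\ge 0$ for
$$G(x,y):=U(x,y)-p\max\{y-1+p^{-1},0\}+\tfrac{p^{p-1}}{2}x^p.$$
Following the template of Lemmas \ref{maj_lemma1} and \ref{lemma_maj2}, I would fix $x\ge 0$ and minimise $y\mapsto G(x,y)$ on $[0,\infty)$, treating separately the three zones on which $G$ is smooth: the \emph{product zone} $\{x+y\le 1-p^{-1}\}$, where $U$ is the Burkholder product and the maximum vanishes; the \emph{parabolic zone} $\{x+y>1-p^{-1},\ y\le 1-p^{-1}\}$, where $U$ is the quadratic and the maximum still vanishes; and the \emph{tail} $\{y>1-p^{-1}\}$, where $U$ is the quadratic and $p\max\{y-1+p^{-1},0\}=py-p+1$. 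One checks that $G$ is continuous across $x+y=1-p^{-1}$ (since $U$ is the explicit form of a single integral) and that the corner of the maximum at $y=1-p^{-1}$ gives $G(x,\cdot)$ a \emph{local maximum} there, so it is not a candidate minimiser.

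In the tail, $\partial_yG=\tfrac{p^2}{2}(y-1)$, so $G(x,\cdot)$ is a convex parabola with vertex $y=1$, whence $G(x,y)\ge G(x,1)=\phi(x)$ with $\phi(x):=\tfrac{p-2}{4p}-\tfrac{p^2}{4}x^2+\tfrac{p^{p-1}}{2}x^p$. In the parabolic zone $G(x,\cdot)$ is again a convex parabola, now with vertex $y=1-2/p\in[0,1-p^{-1}]$, and a short computation gives $G(x,1-2/p)=\phi(x)$. In the product zone $\partial_yU$ vanishes only at $y=(p-2)x$, which for $p>2$ is a \emph{minimum} of $U(x,\cdot)$, and the decisive identity
$$U\bigl(x,(p-2)x\bigr)=-\tfrac{p^{p-1}}{2}x^p$$
yields $G(x,(p-2)x)=0$. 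Keeping track of which of the abscissae $(p-2)x$, $1-2/p$, $1$ actually lies in the zone for which it was derived — the three coinciding at $x=1/p$, which is exactly where the minimiser passes from the product zone into the parabolic zone — I expect to conclude $\inf_{y\ge 0}G(x,y)\in\{0,\phi(x)\}$. Finally one verifies $\phi\ge 0$: $\phi(0)=\tfrac{p-2}{4p}>0$, $\phi'(x)=\tfrac{p^2x}{2}\bigl(p^{p-2}x^{p-2}-1\bigr)$ has $x=1/p$ as its only positive zero, and $\phi(1/p)=0$. This gives $G\ge 0$, i.e. \eqref{maj3}.

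The routine part is the zone-by-zone bookkeeping — checking that the interior critical points land where claimed and that $1-2/p$ is admissible when $p\ge2$. The part I expect to be the real obstacle is the product zone: for $p>2$ the function $y\mapsto U(x,y)$ is neither convex nor concave there, so one cannot just reduce to endpoints; what saves the argument is that its unique critical point $y=(p-2)x$ is a minimum and that $U$, hence $G$, is built to equal $-\tfrac{p^{p-1}}{2}x^p$ along the ray $y=(p-2)x$. A useful byproduct for the (forthcoming) sharpness argument is that equality in \eqref{maj3} holds exactly on the segment $\{(x,(p-2)x):0\le x\le 1/p\}$, the curve that the extremal pair of martingales will follow.
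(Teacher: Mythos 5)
Your proposal is correct and follows essentially the paper's own proof: the same reduction to $n=1$, $x,y\ge 0$, the same three regions determined by $|x|+|y|=1-1/p$ and $|y|=1-1/p$, the same critical points $y=(p-2)x$, $y=1-2/p$, $y=1$, and the same key one-variable inequality (your $\phi\ge 0$ is exactly \eqref{meanvalue}, $(px)^p-1\ge\tfrac{p}{2}\bigl((px)^2-1\bigr)$). The only differences are cosmetic: the paper avoids your zone-membership bookkeeping by proving each region's inequality for all $y$ (minimizing the smooth expression unconstrained), and it deduces \eqref{meanvalue} from convexity of $t\mapsto t^{p/2}$ rather than a derivative analysis of $\phi$.
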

\begin{proof}
As previously, we may assume that $n=1$ and $x,\,y\geq 0$. If $x+y<1-1/p$, then the inequality is equivalent to
$$ (y-(p-1)x)(x+y)^{p-1}+(p-1)^{p-1}x^p\geq 0.$$
But this is true for all nonnegative $x,\,y$. A straightforward analysis of the derivative shows that for a fixed $x$, the left hand side (considered as a function of $y$) attains its minimum for $y=(p-2)x$; this minimum is $0$. Next, suppose that $y\geq 1-1/p$ and put all the terms of \eqref{maj3} on the left-hand side. Then, for a fixed $x$, the expression on the left is a quadratic function of $y$ which attains its minimum for $y=1$. However, for this value of $y$, the majorization is equivalent to
\begin{equation}\label{meanvalue}
 (px)^p-1\geq \frac{p}{2}((px)^2-1),
\end{equation}
which follows immediately from the mean-value property. Finally, if $x+y>1-1/p>y$, \eqref{maj3} becomes
$$ \frac{p^2}{4}\left[y^2-x^2-\frac{2(p-2)y}{p}+\frac{(p-1)^2(p-2)}{p^3}\right]\geq -\frac{p^{p-1}x^p}{2}.$$
But this bound holds true for all $x,\,y$. Indeed, observe that as a function of $y$, the left-hand side attains its minimum for $y=1-2/p$, and for this choice of $y$, the inequality again reduces to \eqref{meanvalue}.
\end{proof}

\begin{proof}[Proof of Theorem \ref{weak_thm}] By Lemma \ref{mainmar}, the definition of $U$ and \eqref{maj3}, we obtain
$$ \E \left\{|Y_t|-1+\frac{1}{p},0\right\}\leq \frac{p^{p-2}}{2}\E |X_t|^p.$$
Arguing as previously, this leads to the bound
$$ \E |Y_t|1_E\leq \frac{p^{p-2}}{2}\E |X_t|^p+\left(1-\frac{1}{p}\right)\mathbb{P}(E).$$
Apply this inequality to the martingales $X/\lambda$, $Y/\lambda$, multiply both sides by $\lambda$ and optimize the right-hand side over $\lambda$. It turns out that the choice
$$ \lambda=\left(\frac{p^{p-1}\E |X_t|^p}{2\mathbb{P}(E)}\right)^{1/p}$$
makes the right-hand side minimal and we obtain
$$ \E |Y_t|1_E\leq  \left(\frac{p^{p-1}}{2}\right)^{1/p}||X_t||_p\mathbb{P}(E)^{1-1/p}.$$
This yields \eqref{weak}, by taking the supremum over all $t$. To prove that this estimate is sharp, pick an arbitrary pair $(X,Y)$ of real-valued martingales such that $Y$ is differentially subordinate to $X$. Introduce the stopping time $\tau=\inf\{t\geq 0:|Y_t|\geq 1\}$. Then the stopped martingale $Y^\tau$ is differentially subordinate to $X$ and thus, applying \eqref{weak} with $E=\{\sup_{s\geq 0}|Y_s|\geq 1\}$, we get
$$ \sup_{t\geq 0}\E |Y_{\tau\wedge t}|1_{\{\sup_s|Y_s|\geq 1\}}\leq \frac{p^{p-1}}{2}||X||_p^p,$$
In turn, this inequality implies
$$ \mathbb{P}\left(\sup_{s\geq 0}|Y_s|> 1\right)\leq  \frac{p^{p-1}}{2}||X||_p^p,$$
which is sharp, as proved by Suh \cite{Suh}.
\end{proof}


\section{Applications} 

\subsection{Logarithmic and weak-type bounds  for martingale transforms on manifolds}\label{manifoldtransforms}
In this section we will apply the probabilistic results (which we have just established) in the study of Riesz transforms on Lie groups. We start from the brief description of the connection between these two environments, and for the detailed study of the interplay we refer the interested reader to \cite{IW}. Suppose that $M$ is an $n$-dimensional Riemannian manifold with Ricci curvature bounded from below (this additional assumption guarantees that the Brownian motion on $M$ does not explode in finite time, see Emery \cite{Em}). Let $\langle\cdot,\cdot\rangle$ denote the inner product on $TM$, the tangent space to $M$. A Brownian motion in $M$ is an $(\F_t)_{t\geq 0}$ adapted process $(B_t)_{t\geq 0}$ with values in $M$ such that for all smooth functions $f:M\to \R$, the process
\begin{equation}\label{itod}
 I_{df}=\left(f(B_t)-f(B_0)-\frac{1}{2}\int_{0+}^t \Delta_M f(B_s)\mbox{d}s\right)_{t\geq 0}
 \end{equation}
is a real-valued continuous martingale. See the monograph \cite{Em} for more on the subject.

Next, let $\mathfrak{K}$ be a continuous, adapted process with values in $T^*M$, the cotangent space of $M$. We say that $\mathfrak{K}$ is \emph{above} $B$, if for all $t\geq 0$ and $\omega\in \Omega$ we have $\mathfrak{K}_t(\omega)\in T^*_{B_t(\omega)}M$. Having assumed this, we can define $I_\mathfrak{K}=\left(\int_0^t \langle \mathfrak{K}_s,\mbox{d}B_s\rangle\right)_{t\geq 0}$, the It\^o integral of $\mathfrak{K}$ with respect to $B$, by requiring that
\smallskip
\begin{itemize}
\item[(i)] if $\mathfrak{K}_t=\mbox{d}f(B_t)$ for some smooth function $f:M\to \R$, then $I_\mathfrak{K}$ equals $I_{df}$ given by \eqref{itod}.
\item[(ii)] if $K$ is a real valued, continuous process, then $I_{K\mathfrak{K}}=\left(\int_0^t K_s\mbox{d}(I_\mathfrak{K})_s\right)_{t\geq 0}$ is the classical It\^o integral of $K$ with respect to the continuous martingale $I_\mathfrak{K}$.
\end{itemize}
\smallskip
These two conditions determine uniquely the class of stochastic integrals. It can be easily verified that if $\mathfrak{K}$ is above $B$, then the process $I_\mathfrak{K}$ is a continuous, real-valued martingale. The covariance process of two such integrals can be expressed by the formula
\begin{equation}\label{covariance}
[I_\mathfrak{K},I_\mathfrak{L}]_t=\int_0^t \operatorname*{Trace}(\mathfrak{K}_s\otimes \mathfrak{L}_s)\mbox{d}s,
\end{equation}
where $\otimes$ is the tensor product and $(\mathfrak{K}_s\otimes \mathfrak{L}_s)(\omega)=\mathfrak{K}_s(\omega)\otimes\mathfrak{L}_s(\omega)\in T^*_{B_s(\omega)}\otimes T^*_{B_s(\omega)}$.

Now assume that $x\in M$ and let $End(T_x^*M)$ be the space of all linear maps from $T^*_xM$ to itself. Let $End(T^*M)$ be the collection of all $End(T^*_xM)$, $x\in M$.  A bounded and continuous process $A$ with values in $End(T^*_xM)$ is called \emph{a martingale transformer with respect to $B$}, if for all $t\geq 0$ and $\omega\in \Omega$ we have $A_t(\omega)\in End(T_{B_t(\omega)}^*M)$. Such an object induces an important action on the class of stochastic integrals. Namely, suppose that $\mathfrak{K}$ is a continuous, bounded process with values in $T^*M$ which is above $B$, and let $A$ be a martingale transformer with respect to $B$. Then $A*I_\mathfrak{K}$, \emph{the martingale transform of $I_\mathfrak{K}$ by $A$}, is the real-valued martingale defined by the identity
$$ A*I_\mathfrak{K}=I_{A\mathfrak{K}}=\left(\int_0^t \langle A_s\mathfrak{K}_s,\mbox{d}B_s\rangle\right)_{t\geq 0}.$$
In the particular case when $\mathfrak{K}=\mbox{d}f$ for some smooth function $f:M\to \R$, we will use the notation $A*f$ instead of $A*I_{df}$. Given a sequence $\mathcal{A}=(A_1,A_2,\ldots,A_d)$ of martingale transformers above $B$, we define $\mathcal{A}*I_{\mathfrak{K}}$ as the $d$-dimensional martingale $(A_1*I_{\mathfrak{K}},A_2*I_{\mathfrak{K}},\ldots,A_d*I_{\mathfrak{K}})$. We introduce the norm of $\mathcal{A}$ by
$$ |||A|||=\sup\left(\sum_{j=1}^d |A_{j,t}(\omega)e|^2\right)^{1/2},$$
where the supremum is taken over all $\omega\in \Omega$, all $t\geq 0$ and all vectors $e\in T_{B_t(\omega)}M$ of length $1$. If $A$ is a single martingale transformer, then we define its norm by $|||A|||=|||(A)|||$.

Theorems studied in the preceding section lead to the following estimates for martingale transforms on manifolds.

\begin{theorem}\label{stoch}
Let $\mathfrak{K}$ be a bounded, continuous, $T^*M$-valued process above $B$.

(i) If $\mathcal{A}$ be a martingale transformer above $B$, then for any  $E\in \F$ we have
\begin{equation}\label{vpo1}
\sup_{t\geq 0}\E |(\mathcal{A}*I_{\mathfrak{K}})_t|1_E\leq K\sup_{t\geq 0}\E \Psi\big(|||\mathcal{A}|||\;|(I_\mathfrak{K})_t|\big)+L(K)\mathbb{P}(E),\qquad K>1,
\end{equation}
and 
\begin{equation}\label{vpo2}
\quad \sup_{t\geq 0}\E |(\mathcal{A}*I_{\mathfrak{K}})_t|1_E\leq K_p|||\mathcal{A}|||\;||I_{\mathfrak{K}}||_p\mathbb{P}(E)^{1-1/p},\qquad \qquad  1<p<\infty.
\end{equation}

(ii) If $A$ is a single martingale transformer satisfying the condition $\langle A_t(\omega)\xi,\xi\rangle=0$ for all $t\geq 0$, $\omega\in \Omega$ and $\xi\in T^*_{B_t(\omega)}M$, then
\begin{equation}\label{po2}
\sup_{t\geq 0}\E \Phi\left(|(A*I_\mathfrak{K})_t|/K\right)\leq \frac{L(K)|||A|||\;||X||_1}{K},\qquad K>2/\pi,
\end{equation}
and
\begin{equation}\label{po3}
||A*I_\mathfrak{K}||_q\leq C_p|||A|||\;||I_\mathfrak{K}||_1^{1/q}||I_\mathfrak{K}||_\infty^{1/p},\qquad 1<p<\infty.
\end{equation}
\end{theorem}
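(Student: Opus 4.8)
The plan is to obtain all four estimates by reducing them, via the stochastic calculus on $M$ set up above, to the abstract martingale inequalities of Section \ref{prob}: Theorems \ref{log_thm} and \ref{weak_thm} will give part (i), while Theorems \ref{martinth} and \ref{martingale_thm3} will give part (ii). In each case the only real work is to exhibit the correct pair of $\R^d$-valued continuous martingales and to check that it is differentially subordinate (and, in (ii), orthogonal). Throughout I will use that the lower Ricci bound prevents $B$ from exploding, so that $I_\mathfrak{K}$, $A*I_\mathfrak{K}$ and $\mathcal{A}*I_\mathfrak{K}$ are bona fide continuous martingales vanishing at $t=0$, and the bracket characterisations apply verbatim.

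For part (i) I would set $X=|||\mathcal{A}|||\,I_\mathfrak{K}$ and $Y=\mathcal{A}*I_\mathfrak{K}=(I_{A_1\mathfrak{K}},\dots,I_{A_d\mathfrak{K}})$. Using \eqref{covariance} and $\operatorname{Trace}(u\otimes v)=\langle u,v\rangle$ one computes
$$[Y,Y]_t=\sum_{j=1}^d\int_0^t|A_{j,s}\mathfrak{K}_s|^2\,\mathrm{d}s,\qquad [X,X]_t=|||\mathcal{A}|||^2\int_0^t|\mathfrak{K}_s|^2\,\mathrm{d}s,$$
and writing $\mathfrak{K}_s=|\mathfrak{K}_s|e_s$ with $|e_s|=1$ the definition of $|||\mathcal{A}|||$ gives the pointwise bound $\sum_j|A_{j,s}\mathfrak{K}_s|^2\le|||\mathcal{A}|||^2|\mathfrak{K}_s|^2$. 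Hence $[X,X]_t-[Y,Y]_t$ is nonnegative and nondecreasing, and since $X_0=Y_0=0$ the process $Y$ is differentially subordinate to $X$; applying Theorem \ref{log_thm} to this pair yields \eqref{vpo1} and applying Theorem \ref{weak_thm} yields \eqref{vpo2}.

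For part (ii), with a single transformer $A$ satisfying $\langle A_s\xi,\xi\rangle=0$ for $\xi\in T^*_{B_s}M$, I would take $X=|||A|||\,I_\mathfrak{K}$ and $Y=A*I_\mathfrak{K}=I_{A\mathfrak{K}}$. The computation above (with $d=1$) again gives differential subordination of $Y$ to $X$, while \eqref{covariance} now yields
$$[X,Y]_t=|||A|||\int_0^t\operatorname{Trace}(\mathfrak{K}_s\otimes A_s\mathfrak{K}_s)\,\mathrm{d}s=|||A|||\int_0^t\langle\mathfrak{K}_s,A_s\mathfrak{K}_s\rangle\,\mathrm{d}s=0,$$
so $X$ and $Y$ are orthogonal, and $Y_0=0$. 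Theorem \ref{martingale_thm3} then gives $\|Y\|_q\le C_p\|X\|_1^{1/q}\|X\|_\infty^{1/p}$, and since $1/q+1/p=1$ the factors of $|||A|||$ collapse to give exactly \eqref{po3}; for \eqref{po2} I would first rescale $X$, $Y$ by $\||||A|||\,I_\mathfrak{K}\|_\infty^{-1}$ to normalise $\|X\|_\infty\le1$, apply Theorem \ref{martinth}, and then undo the rescaling.

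The abstract inputs do all the heavy lifting, so I expect no serious obstacle; the one place to be careful is turning the pointwise operator-norm inequality $\sum_j|A_{j,s}e|^2\le|||\mathcal{A}|||^2$ into the \emph{monotonicity} (not merely nonnegativity) of $[X,X]-[Y,Y]$ — this works precisely because \eqref{covariance} realises every bracket as the time integral of a nonnegative integrand — together with the homogeneity argument needed to bring \eqref{po2} into the scale-invariant form that Theorem \ref{martinth} requires.
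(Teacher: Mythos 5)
Your proposal follows essentially the same route as the paper: compute the quadratic (co)variations of $\mathcal{A}*I_{\mathfrak{K}}$ and $|||\mathcal{A}|||\,I_{\mathfrak{K}}$ from \eqref{covariance}, deduce differential subordination from the pointwise bound $\sum_j|A_{j,s}\mathfrak{K}_s|^2\leq |||\mathcal{A}|||^2|\mathfrak{K}_s|^2$ and orthogonality from $\langle A_s\mathfrak{K}_s,\mathfrak{K}_s\rangle=0$, and then invoke Theorems \ref{log_thm}, \ref{weak_thm}, \ref{martinth} and \ref{martingale_thm3}; this is correct and matches the paper's argument. The only deviation is your rescaling step for \eqref{po2}, which is both unnecessary and, taken literally, not available since $\Phi$ is not homogeneous (so the normalization cannot simply be ``undone''): the estimate \eqref{po2}, exactly like Theorem \ref{martinth} from which it is transferred, is to be read under the normalization $|||A|||\,\|I_{\mathfrak{K}}\|_\infty\leq 1$, under which it is a direct application of that theorem — and this is precisely how it is used later (e.g.\ in Theorem \ref{group}, where $\|f\|_{L^\infty(G)}\leq 1$ and $|||A|||=1$).
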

\begin{proof}
The assertion will follow immediately from the results of Section 2, once we have proven that the appropriate martingales satisfy differential subordination and orthogonality. To show this, pick $t\geq 0$, $\omega\in \Omega$ and let $x=B_t(\omega)\in M$. Let $e_1$, $e_2$, $\ldots$, $e_n$ be an orthonormal basis for $T_xM$, the tangent space to $M$ at $x$. Then for each $j\in \{1,\,2,\,\ldots,\,d\}$,
\begin{align*}
\operatorname*{Trace}\big(A_j\mathfrak{K}_t(\omega)\otimes A_j\mathfrak{K}_t(\omega)\big)&=\sum_{k=1}^n \big(A_j\mathfrak{K}_t(\omega)\otimes A_j\mathfrak{K}_t(\omega)\big)(e_k,e_k)\\
&=\sum_{k=1}^n \big|<A_j\mathfrak{K}_t(\omega),e_k>\big|^2=|A_j\mathfrak{K}_t(\omega)|^2,
\end{align*}
where $<\cdot,\cdot>:T_x^*M\times T_xM\to \R$ stands for the duality product. Therefore, by \eqref{covariance}, for any $0\leq s\leq t$ we may write
\begin{align*}
[\mathcal{A}*I_\mathfrak{K},\mathcal{A}*I_\mathfrak{K}]_t-[\mathcal{A}*I_\mathfrak{K},\mathcal{A}*I_\mathfrak{K}]_s&=\sum_{j=1}^d\int_{s+}^t  \operatorname*{Trace}(A_j\mathfrak{K}_u\otimes A_j\mathfrak{K}_u)\mbox{d}u\\
&=\sum_{j=1}^d\int_{s+}^t |A_j\mathfrak{K}_u|^2\mbox{d}u\\
&\leq |||\mathcal{A}|||^2\int_{s+}^t|\mathfrak{K}_u|^2\mbox{d}u\\
&=[|||\mathcal{A}|||I_\mathfrak{K},|||\mathcal{A}|||I_\mathfrak{K}]_t-[|||\mathcal{A}|||I_\mathfrak{K},|||\mathcal{A}|||I_\mathfrak{K}]_s,
\end{align*}
which is the desired differential subordination. The proof of the orthogonality goes along the same lines: one shows that $\operatorname*{Trace}(A\mathfrak{K}_t(\omega)\otimes \mathfrak{K}_t(\omega))=0$ for all $t$, $\omega$ and obtains $\mbox{d}[A*I_\mathfrak{K},|||A|||I_\mathfrak{K}]=0$ directly from \eqref{covariance}.
\end{proof}

\subsection{Logarithmic and weak-type inequalities for Riesz transforms on Lie groups}\label{LieRiesz}
Now we will describe an elegant  probabilistic representation of first order Riesz transforms on Lie groups $G$ in terms of martingale transforms with respect to the Brownian motion with values in $G\times \R$. The construction goes back to the classical  paper \cite{GV} of Gundy and Varopoulos, in which the case $G=\R^n$ was studied. The idea has been generalized in several directions and exploited in many papers; see e.g. \cite{Ar, BW, G1, G2, V}.

To this end, assume that $G$ is a compact connected Lie group  of dimension $n$,  endowed with a Riemannian bi-invariant metric and let $\mbox{d}x$ denote the usual Riemannian volume  measure on $G$. Suppose that $\mathfrak{g}$ denote the Lie algebra of $G$ and let $\{X_1,\,X_2,\,\ldots,\,X_n\}$ be an orthonormal basis for $\mathfrak{g}$. Consider the group $\tilde{G}=G\times \R$, with the product Riemannian metric and the corresponding Lie algebra $\mathfrak{g}\oplus \R$. If $X_0=\partial/\partial y$ is the generator of the Lie algebra of $\R$, then $\{X_1,\,X_2,\,\ldots,\,X_n,\,X_0\}$ forms an orthonormal basis of $\mathfrak{g}\oplus \R$. 

Let $X$, $Y$ be independent Brownian motions in $G$ and $\R$, respectively; then $Z=(X,Y)$ is a Brownian motion in the product group $\tilde{G}$. Fix $\lambda>0$ and assume that the initial distribution of $Z^\lambda=(Z_t)_{t\geq 0}$ is the product measure $\mbox{d}x\times \delta_{\lambda}$, where $\delta_\lambda$ is the Dirac measure concentrated on $\{\lambda\}$. Put $\tilde{G}^+=G\times [0,\infty)$ and introduce the stopping time 
$$\tau_0=\inf\{t\geq 0:Y_t\leq 0\}.$$
 Then $(Z^{\lambda}_{\tau_0\wedge t})_{t\geq 0}$ is a Brownian motion in $\tilde{G}^+$, stopped at the boundary of this set. Let $A:\tilde{G}^+\to End(T^*\tilde{G}^+)$ be an arbitrary  continuous section of the bundle $End(T^*\tilde{G}^+)$, and consider the process $\tilde{A}=\big(A(Z_{\tau_0\wedge t})\big)_{t\geq 0}$. Then $\tilde{A}$ is a martingale transformer. 
Fix a function $f\in C_0^\infty(G)$ and let $F$ be its Poisson extension to $\tilde{G}^+$. That is,  the unique $C^\infty$ function on $\tilde{G}$ satisfying
$$ 0=\Delta_{\tilde{G}}F(x,y)=\Delta_GF(x,y)+\frac{\partial^2 F}{\partial y^2}(x,y),\qquad x\in G,\,y>0,$$
and such that $F$ is bounded on $\tilde{G}$ (see  \cite{G2} and \cite{St0}). 
Now, for $A$, $f$, $F$ and $\lambda$ as above, define the projection of the $A$-transform of $f$ by
$$ T_A^\lambda f(x)=\E\big[\tilde{A}*dF|Z_{\tau_0}=x\big],$$
the conditional expectation of $\tilde{A}*dF$ with respect to the $\sigma$-algebra generated by $Z_{\tau_0}$. Since $Z_{\tau_0}$ takes values in the boundary of $\tilde{G}\times \{0\}$,  $T_A^\lambda f$ can be interpreted as a function on $G$. 

Recall that $\{X_1,\,X_2,\,\ldots,\,X_n,\,X_0\}$ is an orthonormal basis of $\mathfrak{g}\oplus \R$. For a given $j\in\{1,\,2,\,\ldots,\,n\}$, let $R_j=R_{X_j}=X_j\circ(-\Delta_G)^{-1/2}$ be the Riesz transform on $G$ 
in the direction $X_j$. These operators are defined by the following requirement: for any $f:G\to \R$, we have
$$ R^Gf(a)=\sum_{j=1}^n R_jf(a)X_j(a)\qquad \mbox{ for all }a\in G,$$
where $X_j(a)$ is the vector field $X_j$ evaluated at the point $a$. 
Consider the linear maps $A^j,\,E^j:\mathfrak{g}\oplus \R\to\mathfrak{g}\oplus \R$ given by
$$ A^jX_m=\begin{cases}
X_j & \mbox{if }m=0,\\
-X_0 & \mbox{if }m=j,\\
0 & \mbox{otherwise},
\end{cases}
\qquad  E^jX_m=\begin{cases}
X_j & \mbox{if }m=0,\\
0 & \mbox{otherwise}.
\end{cases}$$

Clearly, $A^j$ defines a smooth section of $End(T\tilde{G}^+)$ and can be regarded as a martingale transformer with the use of the natural identification between $\mathfrak{g}\oplus \R$ and its dual, induced by the Riemannian metric. We have the following statement, which follows immediately from the results of Arcozzi \cite{Ar}.

\begin{theorem}\label{Arcozzi}
Let $f\in C_0^\infty(G)$. Then 
$$\lim_{\lambda\to\infty}T^\lambda_{A^j}f = R_jf\qquad \mbox{in }L^p(G), \quad 1\leq p<\infty,$$
and
$$\lim_{\lambda\to\infty}T^\lambda_{E^j}f = -\frac{1}{2}R_jf\qquad \mbox{in }L^p(G), \quad 1\leq p<\infty.$$
\end{theorem}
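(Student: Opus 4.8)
The plan is to follow the Gundy--Varopoulos heat-semigroup philosophy adapted to the Lie group setting, exactly as in Arcozzi \cite{Ar}, and to reduce the claimed $L^p$ convergence to the well-known background-radiation / conditional-expectation computation. First I would write out explicitly what $T^\lambda_{A^j}f$ is: conditioning the martingale transform $\tilde A*dF$ on the exit position $Z_{\tau_0}=x$ produces a Calder\'on--Zygmund-type operator on $G$ whose kernel is obtained by integrating the Poisson kernel against itself along the vertical variable. The key identity to establish is that
\begin{equation*}
T^\lambda_{A^j}f(x)=\int_G K^\lambda_j(x,a)f(a)\,\mathrm{d}a,
\end{equation*}
where $K^\lambda_j$ is built from the derivatives $X_j F$ and $\partial_y F$ of the Poisson extension. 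Using the harmonicity of $F$ on $\tilde G^+$, an integration-by-parts in the $y$-variable (the ``Littlewood--Paley'' identity $\int_0^\infty y\,\partial_y(\partial_y F\cdot \partial_y G)\,\mathrm{d}y$-type manipulation) converts this kernel, in the limit $\lambda\to\infty$, into the Riesz kernel $X_j(-\Delta_G)^{-1/2}$. The factor $-\tfrac12$ in the second assertion comes simply from the fact that $E^j$ contributes only the ``pure vertical-to-horizontal'' half of the antisymmetric operator $A^j$, i.e. $A^j=E^j-(E^j)^*$ in the appropriate sense, so conditioning kills the adjoint piece up to a sign and a factor of $2$; this is exactly the elementary bookkeeping already carried out in \cite{GV} for $G=\R^n$ and extended to Lie groups in \cite{Ar}.

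Concretely, the steps in order would be: (1) identify $\tilde A^j*dF$ and $\tilde E^j*dF$ as stochastic integrals of the vertical Brownian motion against the horizontal gradient of $F$ and vice versa, using the explicit description of $A^j$, $E^j$ on the orthonormal frame $\{X_1,\dots,X_n,X_0\}$; (2) apply the formula $T^\lambda_A f(x)=\E[\tilde A*dF\mid Z_{\tau_0}=x]$ together with the fact that $(Z^\lambda_{\tau_0\wedge t})$ is Brownian motion in $\tilde G^+$ killed at the boundary, to express $T^\lambda_{A^j}f$ and $T^\lambda_{E^j}f$ as integral operators against the Poisson-type kernel of $\tilde G^+$; (3) invoke the spectral/subordination identity $(-\Delta_G)^{-1/2}=\tfrac{1}{\sqrt\pi}\int_0^\infty e^{t\Delta_G}t^{-1/2}\mathrm{d}t$ (equivalently, the relation between the Poisson semigroup $e^{-y\sqrt{-\Delta_G}}$ and the fractional integral) to recognize the $\lambda\to\infty$ limit of the kernel as precisely $X_j(-\Delta_G)^{-1/2}$, resp. $-\tfrac12 X_j(-\Delta_G)^{-1/2}$; (4) upgrade pointwise/kernel convergence to $L^p(G)$ convergence for $1\le p<\infty$ by a uniform-boundedness argument — the family $\{T^\lambda_{A^j}\}_\lambda$ is uniformly bounded on $L^p$ by Theorem~\ref{martingale_thm?} (via the differential subordination established in the proof of Theorem~\ref{stoch}), so it suffices to check convergence on the dense class $C_0^\infty(G)$, where the kernel computation gives it directly, and then pass to the limit using dominated convergence together with the uniform bound.

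The main obstacle I anticipate is step (3)--(4): making the identification of the limiting kernel rigorous, i.e. justifying the interchange of the limit $\lambda\to\infty$ with the integration over $G$ and over the vertical variable, and handling the fact that for finite $\lambda$ the ``background radiation'' measure $\mathrm{d}x\times\delta_\lambda$ is not yet the stationary one. In the Euclidean case this is the classical observation that as the starting height $\lambda\to\infty$ the distribution of the exit position becomes uniform and the vertical occupation measure becomes Lebesgue measure on $[0,\infty)$; on a compact Lie group the same holds because the heat kernel on $G$ equidistributes, but one must quote the appropriate mixing/heat-kernel estimates (or simply cite \cite{Ar} where all of this is done). Since the excerpt explicitly says ``which follows immediately from the results of Arcozzi \cite{Ar}'', my proof would in fact be short: set up the notation as above, verify that our operators $T^\lambda_{A^j}$, $T^\lambda_{E^j}$ coincide with those of \cite{Ar} up to the identification of $A^j$ and $E^j$, and then cite Arcozzi's convergence theorem verbatim, noting that the factor $-\tfrac12$ in the $E^j$ case is exactly what appears there because $E^j$ is ``half'' of the antisymmetric transformer used to represent the full Riesz transform.
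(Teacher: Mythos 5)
Your proposal is correct and matches the paper's treatment: the paper gives no independent argument, stating that the theorem ``follows immediately from the results of Arcozzi \cite{Ar}'', which is precisely the citation you end up making. Your preliminary sketch of the Gundy--Varopoulos background-radiation computation and the origin of the factor $-\tfrac{1}{2}$ for $E^j$ is consistent with Arcozzi's argument but is not needed beyond the citation.
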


\begin{remark}  Using the space time Brownian motion construction introduced in \cite{BMH} and the Fourier transform  (Peter-Weyl), a quite direct and simple probabilistic representation for second order Riesz transforms is given in \cite{AppBan} and \cite{BanBau}.  Following that  argument with the space time Brownian motion replaced by the Brownian motion $Z$  above leads to a slightly different construction of first order Riesz transforms on $G$.  We leave the details to the interested reader. 
\end{remark}

With our probabilistic representation for Riesz transforms on $G$, we are ready to establish their logarithmic and weak-type inequalities.  In what follows we again use  $|E|=\int_G \chi_A \mbox{d}x$ to denote  the volume measure of $E\subset G$. Recall the constant $K_p$ given by \eqref{defKp}.

\begin{theorem}\label{vR}
(i) For any $K>2/\pi$, any $f:G\to \R$ with $\int_G\Psi(|f|)<\infty$ and any Borel subset $E$ of $G$ we have
\begin{equation}\label{vpg1}
\int_E |R^Gf(x)|\mbox{d}x\leq 2K\int_G\Psi(|f(x)|)\mbox{d}x+ \frac{|E|}{K-1}.
\end{equation}

(ii) For any $1<p<\infty$, any $f\in L^p(G)$ and any Borel subset $E$ of $G$ we have
\begin{equation}\label{vpg2}
\int_E |R^Gf(x)|\mbox{d}x\leq 2K_p||f||_{L^p(G)}|E|^{1-1/p}.
\end{equation}
\end{theorem}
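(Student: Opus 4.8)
The plan is to derive both bounds from the Gundy--Varopoulos/Arcozzi probabilistic representation of $R^G$ (Theorem \ref{Arcozzi}) together with the abstract martingale estimates of Theorems \ref{log_thm} and \ref{weak_thm} (equivalently, with Theorem \ref{stoch}), using that conditional expectation is an $L^1$-contraction that does not increase norms. By density it suffices to treat $f\in C_0^\infty(G)$, and since $R^G$ annihilates constants we may also assume $\int_G f=0$. In the notation of \S\ref{LieRiesz}: $Z=(X,Y)$ is Brownian motion on $\tilde{G}=G\times\R$ started from $\mathrm{d}x\times\delta_\lambda$, $\tau_0=\inf\{t:Y_t\le 0\}$, and $F$ is the bounded harmonic extension of $f$ to $\tilde{G}^+$. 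By bi-invariance the exit point $X_{\tau_0}$ has as law the (normalized) volume measure on $G$, so $\int_E|R^Gf|\,\mathrm{d}x=\E\big[|R^Gf(X_{\tau_0})|\,\mathbf{1}_{E'}\big]$ with $E'=\{X_{\tau_0}\in E\}$ and $\mathbb{P}(E')=|E|$.

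The one substantial point is to realize $R^Gf(X_{\tau_0})$ as the projection onto $\sigma(Z_{\tau_0})$ of a vector martingale that is differentially subordinate to a fixed multiple of the scalar martingale $I_{dF}=(F(Z_{t\wedge\tau_0})-F(Z_0))_{t\ge 0}$, \emph{with a dimension-free constant}. For this I would use neither the transformers $A^j$ nor the $E^j$ separately, but the conjugate transformers $D^j:=A^j-E^j$, which send $X_j$ to $-X_0$ and kill every other basis vector; consequently $D^j\,dF=-(X_jF)\,X_0^*$ is a scalar multiple of the covector dual to $X_0$, so all $n$ coordinates of $\tilde{\mathcal{D}}*dF$, with $\mathcal{D}=(D^1,\dots,D^n)$, are It\^o integrals against the \emph{same} one-dimensional vertical Brownian motion. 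Hence, by \eqref{covariance},
\[ \mathrm{d}\big[\tilde{\mathcal{D}}*dF,\tilde{\mathcal{D}}*dF\big]_s=\sum_{j=1}^n|D^j dF(Z_s)|^2\,\mathrm{d}s=|\nabla_GF(Z_s)|^2\,\mathrm{d}s\le|\nabla_{\tilde{G}}F(Z_s)|^2\,\mathrm{d}s=\mathrm{d}\big[I_{dF},I_{dF}\big]_s, \]
so $\tilde{\mathcal{D}}*dF$ is differentially subordinate to $I_{dF}$ (equivalently, $|||\mathcal{D}|||=1$): the key is that it is $\nabla_GF$, and not $\partial_yF$, that gets copied over the $n$ coordinates, so no factor of $n$ appears. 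By Theorem \ref{Arcozzi}, $T^\lambda_{D^j}f=T^\lambda_{A^j}f-T^\lambda_{E^j}f$ converges in $L^p(G)$ to $c\,R_jf$ for a fixed constant $c$, so $R^Gf(X_{\tau_0})$ is the $L^p$-limit of $c^{-1}\,\E\big[(\tilde{\mathcal{D}}*dF)_{\tau_0}\mid Z_{\tau_0}\big]$. Moreover, since $\int_G f=0$, $\E[(I_{dF})_{\tau_0}\mid Z_{\tau_0}]=f(X_{\tau_0})-P_\lambda f(X_0)\to f(X_{\tau_0})$ in $L^p$, whence $\|(I_{dF})_{\tau_0}\|_p\to\|f\|_{L^p(G)}$ and $\E\Psi(|(I_{dF})_{\tau_0}|)\to\int_G\Psi(|f|)\,\mathrm{d}x$ as $\lambda\to\infty$.

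Granting this, the rest is bookkeeping. As the Euclidean norm is convex and $\mathbf{1}_{E'}$ is $\sigma(Z_{\tau_0})$-measurable, conditional Jensen gives $\E\big[\,\big|\E[(\tilde{\mathcal{D}}*dF)_{\tau_0}\mid Z_{\tau_0}]\big|\,\mathbf{1}_{E'}\big]\le\E\big[|(\tilde{\mathcal{D}}*dF)_{\tau_0}|\,\mathbf{1}_{E'}\big]$. Applying Theorem \ref{weak_thm} to the differentially subordinate pair $(I_{dF},\tilde{\mathcal{D}}*dF)$ --- or, for \eqref{vpg1}, Theorem \ref{log_thm} after the harmless rescaling of that pair which turns the additive term into $|E|/(K-1)$ --- and then letting $\lambda\to\infty$ while collecting the constant $c$ (or, if convenient, replacing the dominator $I_{dF}$ by the more generous $2(F(Z_{\cdot\wedge\tau_0})-F(Z_0))$, which still dominates and whose projection is $2f$) yields \eqref{vpg1} and \eqref{vpg2}. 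Finally one removes the normalization $\int_G f=0$ and extends from $C_0^\infty(G)$ to the stated classes of $f$ by density.

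The hard part is exactly the dimension-free differential subordination above: the componentwise representations through $A^j$ or $E^j$ produce a quadratic variation of size $n\,|\partial_yF|^2\,\mathrm{d}s$, which is not controlled by $\mathrm{d}[I_{dF}]$ uniformly in $n$, and it is the passage to $D^j=A^j-E^j$ --- all coordinates driven by the common vertical Brownian motion --- that repairs this. The remaining ingredients (the exit law of $X_{\tau_0}$, the $L^p$-convergence of the Poisson extension, and the mean-zero reduction) are standard and follow Arcozzi \cite{Ar}.
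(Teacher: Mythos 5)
Your route is essentially the paper's: the Gundy--Varopoulos/Arcozzi representation, the subordination inequalities of Theorems \ref{log_thm} and \ref{weak_thm}, conditional Jensen, and the limit $\lambda\to\infty$. Even your ``key point'' is the intended one: the paper's transformer $\mathcal{A}=(E^1,\dots,E^n)$, once one lets the maps act on covectors by pullback (which is what makes the assertion $|||\mathcal{A}|||=1$ true -- under the naive metric identification one would get $\sqrt{n}$, as you observe), sends $dF$ to $(X_jF)\,X_0^*$ in the $j$-th slot, i.e.\ it is exactly your ``horizontal derivative against the common vertical motion'' transform; so your $\mathcal{D}$ and the paper's $\mathcal{A}$ produce the same vector martingale up to sign, with quadratic variation $\int|\nabla_GF|^2\le\int|\nabla_{\tilde G}F|^2$ and dimension-free subordination constant $1$.

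Three points in your write-up do need repair before the stated constants come out. First, the constant $c$: you leave it unevaluated, but if you compute it literally from Theorem \ref{Arcozzi} as printed ($T^\lambda_{A^j}f\to R_jf$, $T^\lambda_{E^j}f\to-\tfrac12R_jf$) you get $c=\tfrac32$ and hence the bound $\tfrac23K_p$, which is too strong to be true; the two limits in fact carry the same sign (test on one eigenspace: if $-\Delta_Gf=\mu f$, both pairings reduce to $\pm\tfrac{1}{2\sqrt{\mu}}\int_G (X_jf)\,g\,\mbox{d}x$ after using skew-adjointness of $X_j$), so the projection of your $\mathcal{D}$-transform is $\pm\tfrac12 R_jf$, and the factor you lose is exactly $2$, matching \eqref{vpg1}--\eqref{vpg2}. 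Second, the reduction to $\int_G f=0$ is not harmless and cannot simply be ``removed at the end'': centering is not a contraction for $\|\cdot\|_{L^p(G)}$ when $p\neq2$, nor for $\int_G\Psi(|\cdot|)$, so the estimate for $f-\bar f$ does not yield \eqref{vpg1}--\eqref{vpg2} for $f$ with the stated constants. The reduction is also unnecessary: take as dominating martingale $X_t=F(Z_{t\wedge\tau_0})$ \emph{including} its initial value $P_\lambda f(X_0)$; then $|Y_0|=0\le|X_0|$, the quadratic variations are unchanged, $X_\infty=f(X_{\tau_0})$ has the law of $f$ under $\mbox{d}x$, so $\|X\|_p=\|f\|_{L^p(G)}$ and $\sup_{t}\E\Psi(|X_t|)=\int_G\Psi(|f|)\,\mbox{d}x$ for every $\lambda$, and the limit is needed only on the projection side. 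Third, for \eqref{vpg1} the ``harmless rescaling'' of the pair to $(2X,2Y)$ produces $K\int_G\Psi(2|f|)\,\mbox{d}x$, not $2K\int_G\Psi(|f|)\,\mbox{d}x$ (since $\Psi(2t)\ge2\Psi(t)$); the correct bookkeeping is to apply Theorem \ref{log_thm} to the unscaled pair, project to get $\int_E\tfrac12|R^Gf|\,\mbox{d}x\le K\int_G\Psi(|f|)\,\mbox{d}x+\tfrac{|E|}{2(K-1)}$, and then double this scalar inequality -- which is exactly what the paper's splitting of $E$ into $E^\pm$ accomplishes. With these repairs your argument is complete and coincides with the paper's proof.
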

\begin{proof}
We will only establish (i), the reasoning leading to (ii) is analogous. By standard density arguments, it suffices to prove the bound for $f\in C^\infty(G)$. Consider the martingale transformer $\mathcal{A}=(E^1,E^2,\ldots,E^n)$; directly from the definition, we derive that $|||A|||=1$. Now, recall the inequality \eqref{intin} established in the proof of Theorem \ref{log_thm}. Letting $t\to\infty$, we see that this intermediate bound leads to the estimate
\begin{equation}\label{inr}
 \E \max\left\{|(\mathcal{A}*I_{dF})_\infty|-\frac{1}{2(K-1)},0\right\}\leq K\E \Psi( |(I_{dF})_\infty|),
\end{equation}
see the proof of Theorem \ref{stoch}. The function $x\mapsto \max\{|x|-1/(2(K-1)),0\}$ is convex, so 
\begin{align*}
&\int_G \max\left\{\frac{1}{2}|R^Gf(x)|-\frac{1}{2(K-1)},0\right\}\mbox{d}x\\
 &\qquad \leq \liminf_{\lambda\to \infty}\; \int_G \max\left\{|T^\lambda_{\mathcal{A}}f(x)|-\frac{1}{2(K-1)},0\right\}\mbox{d}x\\
 &\qquad = \liminf_{\lambda\to \infty}\; \E \max\left\{|T^\lambda_{\mathcal{A}}f(B_{\tau_0})|-\frac{1}{2(K-1)},0\right\}\\
 &\qquad \leq \liminf_{\lambda\to \infty}\; \E \max\left\{|\tilde{\mathcal{A}}*I_{dF}|_\infty-\frac{1}{2(K-1)},0\right\}\\
&\qquad \leq K\E \Psi(|(I_{dF})_\infty|)\\
&\qquad =K\int_G\Psi(|f(x)|)\mbox{d}x.
 \end{align*}
Here in the first inequality we have used Fatou's lemma and Lemma \ref{Arcozzi}, then we have exploited conditional version of Jensen's inequality and finally we applied \eqref{inr}. 
Now we adapt the reasoning from the proof of Theorem \ref{log_thm}. If $E$ is an arbitrary subset of $G$, we split it into 
$$ E^-=E\cap\{|R^Gf(x)|<1/(K-1)\},\qquad E^+=E\cap\{|R^Gf(x)|\geq 1/(K-1)\},$$
and write
\begin{align*} \int_{E^-}|R^Gf(x)|\mbox{d}x&\leq \frac{|E^-|}{K-1},\\
\int_{E^+}|R^Gf(x)|\mbox{d}x-\frac{|E^+|}{K-1}&\leq \int_G \max\left\{|R^Gf(x)|-\frac{1}{K-1},0\right\}\mbox{d}x\\
&\leq 2K\int_G\Psi(2|f(x)|)\mbox{d}x.
\end{align*}
It suffices to add the last two inequalities to get the claim.
\end{proof}

To prove related estimates for directional Riesz transforms, one requires an additional duality argument. Namely, first we show the following auxiliary bounds.
\begin{theorem}\label{group}
Let $j\in \{1,\,2,\,\ldots,\,d\}$ and $f\in L^\infty(G)$ be fixed. 

(i) If $K>2/\pi$ and $||f||_{L^\infty(G)}\leq 1$, then
\begin{equation}\label{pg1}
 \int_G \Phi\left(|R_jf(x)|/K\right)\mbox{d}x\leq \frac{L(K)||f||_{L^1(G)}}{K}.
\end{equation}

(ii) For any $1<q<\infty$ we have
\begin{equation}\label{pg2}
||R_jf||_{L^q(G)}\leq C_p||f||_{L^1(G)}^{1/q}||f||_{L^\infty(G)}^{1/p}.
\end{equation}
\end{theorem}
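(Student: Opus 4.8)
The plan is to transfer the problem to orthogonal martingales via Arcozzi's probabilistic representation (Theorem~\ref{Arcozzi}) and then invoke the sharp dual estimates of Theorems~\ref{martinth} and \ref{martingale_thm3}. By a standard density argument it suffices to prove \eqref{pg1} and \eqref{pg2} for $f\in C^\infty(G)$. Fix such an $f$, let $F$ be its Poisson extension to $\tilde G^+$, and recall the section $A^j$ defined above by $A^jX_0=X_j$, $A^jX_j=-X_0$, $A^jX_m=0$ for $m\notin\{0,j\}$, with associated transformer process $\tilde A^j=(A^j(Z_{\tau_0\wedge t}))_{t\ge0}$. Since the frame $\{X_0,X_1,\ldots,X_n\}$ is orthonormal at every point of $\tilde G$ (the metric being bi-invariant) and $A^j$ is antisymmetric, one has $\langle A^j_t(\omega)\xi,\xi\rangle=0$ for all $t,\omega$ and $\xi\in T^*_{B_t(\omega)}\tilde G^+$, and $|||\tilde A^j|||=|||A^j|||=1$. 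Hence, exactly as in the proof of Theorem~\ref{stoch}, the pair $X:=I_{dF}$, $Y:=\tilde A^j*I_{dF}$ consists of orthogonal $\R$-valued martingales with $Y$ differentially subordinate to $X$ and $Y_0\equiv0$.

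Next I record the two norms of $X=I_{dF}$ entering the estimates. As $F$ is bounded and harmonic on $\tilde G^+$ with boundary trace $f$, the maximum principle gives $\|I_{dF}\|_\infty=\|F\|_{L^\infty(\tilde G^+)}\le\|f\|_{L^\infty(G)}$. For the $L^1$-norm, note that $(I_{dF})_\infty=F(Z_{\tau_0})=f(X_{\tau_0})$; since the horizontal Brownian motion $X$ starts from the (normalized) volume measure, which is stationary for it, $X_{\tau_0}$ is again volume-distributed, so $\|I_{dF}\|_1=\E|f(X_{\tau_0})|=\|f\|_{L^1(G)}$, uniformly in $\lambda$.

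I would now run the two arguments in parallel. For (i), assume $\|f\|_{L^\infty(G)}\le1$; then $\|X\|_\infty\le1$, so the pair $(X,Y)$ meets the hypotheses of Theorem~\ref{martinth}, which yields
$$\sup_{t\ge0}\E\,\Phi\!\left(|(\tilde A^j*I_{dF})_t|/K\right)\le\frac{L(K)\,\|f\|_{L^1(G)}}{K},\qquad K>2/\pi .$$
Because $T^\lambda_{A^j}f(x)=\E[\tilde A^j*dF\,|\,Z_{\tau_0}=x]$, the map $\Phi(|\cdot|/K)$ is convex, and $Z_{\tau_0}$ is distributed as $\mbox{d}x$ on $G\times\{0\}$, the conditional Jensen inequality gives, for every $\lambda>0$,
$$\int_G\Phi\!\left(|T^\lambda_{A^j}f(x)|/K\right)\mbox{d}x\le\E\,\Phi\!\left(|(\tilde A^j*I_{dF})_\infty|/K\right)\le\frac{L(K)\,\|f\|_{L^1(G)}}{K}.$$
By Theorem~\ref{Arcozzi}, $T^\lambda_{A^j}f\to R_jf$ in $L^2(G)$ as $\lambda\to\infty$, so along a sequence $\lambda_k\to\infty$ the convergence is almost everywhere on $G$; Fatou's lemma applied to the nonnegative integrand $\Phi(|\cdot|/K)$ then gives \eqref{pg1}. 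For (ii), Theorem~\ref{martingale_thm3} gives $\|\tilde A^j*I_{dF}\|_q\le C_p\|I_{dF}\|_1^{1/q}\|I_{dF}\|_\infty^{1/p}\le C_p\|f\|_{L^1(G)}^{1/q}\|f\|_{L^\infty(G)}^{1/p}$; since conditional expectation is an $L^q$-contraction, $\|T^\lambda_{A^j}f\|_{L^q(G)}\le\|\tilde A^j*I_{dF}\|_q$, and letting $\lambda\to\infty$ (using the $L^q$-convergence of Theorem~\ref{Arcozzi}) yields \eqref{pg2}.

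All the analytic depth lies in the sharp orthogonal martingale inequalities (Theorems~\ref{martinth} and \ref{martingale_thm3}) and in Arcozzi's representation, so the remaining work is bookkeeping: verifying the antisymmetry relation $\langle A^j\xi,\xi\rangle=0$ and $|||A^j|||=1$, checking that the passage to the boundary preserves $\|\cdot\|_{L^\infty}$ and does not inflate $\|\cdot\|_{L^1}$, and justifying the interchange of the $\lambda$-limit with the integral through a.e.\ convergence and Fatou's lemma. I expect the only point needing genuine care to be the identification $\|I_{dF}\|_1=\|f\|_{L^1(G)}$ — that is, matching the probabilistic quantities with the $L^p(G)$-norms of $f$ — which rests on the stationarity of the volume measure under the horizontal Brownian motion.
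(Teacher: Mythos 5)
Your proposal is correct and follows essentially the same route as the paper, which simply says to repeat the argument of Theorem \ref{vR} with the one-dimensional transformer $A^j$ (noting $|||A^j|||=1$ and $\langle A^j\xi,\xi\rangle=0$) and to invoke the orthogonal dual estimates of Theorems \ref{martinth} and \ref{martingale_thm3}, leaving the details to the reader. Your identifications $\|X\|_\infty\leq\|f\|_{L^\infty(G)}$ and $\|X\|_1=\|f\|_{L^1(G)}$, the conditional Jensen/contraction step, and the passage $\lambda\to\infty$ via Theorem \ref{Arcozzi} with Fatou are exactly the omitted bookkeeping, carried out at the same level of rigor as the paper's proof of Theorem \ref{vR}.
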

\begin{proof}
The proof is similar to that of Theorem \ref{vR}. Namely, one exploits the one-dimensional martingale transformer $A^j$, which satisfies $|||A|||=1$ and $\langle A\xi,\xi\rangle=0$ for all $\xi$. The further details are omitted and left to the reader.
\end{proof}

Now we are ready to deduce the logarithmic and weak-type estimates for directional Riesz transforms.
\begin{theorem}\label{group1}
Let $j\in \{1,\,2,\,\ldots,\,n\}$.

(i) For any $K>2/\pi$, any $f:G\to \R$ with $\int_G\Psi(|f|)<\infty$ and any Borel subset $E$ of $G$ we have
\begin{equation}\label{logriesz}
\int_E |R_jf(x)|\mbox{d}x\leq K\int_G\Psi(|f(x)|)\mbox{d}\chi(x)+L(K)\cdot |E|.
\end{equation}

(ii) For any $1<p<\infty$, any $f\in L^p(G)$ and any subset $E$ of $G$ we have
\begin{equation}\label{weakriesz}
 \int_E|R_jf(x)|\mbox{d}x\leq C_p ||f||_{L^p(G)}|E|^{1-1/p}.
\end{equation}
\end{theorem}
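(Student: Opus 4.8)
The plan is to obtain both \eqref{logriesz} and \eqref{weakriesz} from the ``dual'' bounds of Theorem~\ref{group} by a standard duality argument. The starting point is the observation that the directional Riesz transform $R_j=X_j\circ(-\Delta_G)^{-1/2}$ is skew-adjoint on $L^2(G)$: since the metric on $G$ is bi-invariant, $X_j^*=-X_j$ with respect to $\mbox{d}x$, while $(-\Delta_G)^{-1/2}$ is self-adjoint and commutes with $X_j$ (the Laplace--Beltrami operator is the Casimir element, hence central in the enveloping algebra), so that $R_j^*=-R_j$. Combining this with the classical $L^p$-boundedness of $R_j$, $1<p<\infty$, and with a routine density reduction to $f\in C^\infty(G)$ in the logarithmic case (exactly as in the proof of Theorem~\ref{vR}), all the dualities written below are legitimate.

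Now fix a measurable set $E\subseteq G$; we may assume $0<|E|$, and note $|E|\le|G|<\infty$ automatically. Put $g=\operatorname*{sgn}(R_jf)\cdot\chi_E$, so that $g\in L^\infty(G)$ with $\|g\|_{L^\infty(G)}\le1$ and $\|g\|_{L^1(G)}\le|E|$. Then, using $R_j^*=-R_j$,
\[
 \int_E|R_jf(x)|\,\mbox{d}x=\int_G (R_jf)(x)\,g(x)\,\mbox{d}x=-\int_G f(x)\,(R_jg)(x)\,\mbox{d}x\leq\int_G |f(x)|\,|R_jg(x)|\,\mbox{d}x.
\]
For part~(i), I would then invoke the rescaled Fenchel--Young inequality $ab\leq K\Psi(a)+K\Phi(b/K)$ (valid for $a,b\geq0$ because $\Phi$ and $\Psi$ are conjugate Young functions, and $K\Phi(\cdot/K)$ is the Legendre conjugate of $K\Psi$) with $a=|f(x)|$, $b=|R_jg(x)|$; integrating over $G$ and applying Theorem~\ref{group}(i) to $g$, which is permissible since $\|g\|_{L^\infty(G)}\le1$, gives $K\int_G\Phi(|R_jg|/K)\,\mbox{d}x\leq L(K)\|g\|_{L^1(G)}\leq L(K)|E|$, and hence
\[
 \int_E|R_jf(x)|\,\mbox{d}x\leq K\int_G\Psi(|f(x)|)\,\mbox{d}x+L(K)|E|,
\]
which is \eqref{logriesz}. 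For part~(ii), I would instead estimate $\int_G|f|\,|R_jg|$ by H\"older's inequality with exponents $p$ and $q=p/(p-1)$, obtaining $\int_G|f|\,|R_jg|\leq\|f\|_{L^p(G)}\|R_jg\|_{L^q(G)}$, and then use Theorem~\ref{group}(ii) applied to $g$ to get $\|R_jg\|_{L^q(G)}\leq C_p\|g\|_{L^1(G)}^{1/q}\|g\|_{L^\infty(G)}^{1/p}\leq C_p|E|^{1/q}$; since $1/q=1-1/p$, this is exactly \eqref{weakriesz}.

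The one point that requires a little care is the justification of the identity $\int_G(R_jf)g\,\mbox{d}x=-\int_G f(R_jg)\,\mbox{d}x$ together with the integrability of $f\cdot R_jg$ in the setting of~(i), where $f$ is only assumed to satisfy $\int_G\Psi(|f|)<\infty$ rather than to lie in some $L^p(G)$. This is handled either by the density reduction to $f\in C^\infty(G)$ mentioned above, or directly: Young's inequality and Theorem~\ref{group}(i) give $|f|\,|R_jg|\leq\Psi(|f|)+\Phi(|R_jg|)\in L^1(G)$ (using convexity of $\Phi$ to absorb the constant $K$), and then the duality identity, being continuous and bilinear in $(f,g)$ over a dense class of smooth functions, extends by approximation. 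Apart from this bookkeeping, no further obstacle is expected.
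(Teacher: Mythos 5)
Your proposal is correct and takes essentially the same route as the paper: duality with $g=\operatorname{sgn}(R_jf)\,\chi_E$, Young's inequality (resp.\ H\"older) and the dual bounds of Theorem \ref{group}, followed by a density argument. The only difference is cosmetic: the paper justifies the identity $\int_G (R_jf)g\,\mbox{d}x=-\int_G f\,R_jg\,\mbox{d}x$ by expanding $f$ and $g$ in the Peter--Weyl eigenspaces of $\Delta_G$ and integrating by parts there, whereas you invoke the skew-adjointness of $X_j$ and the fact that $\Delta_G$ (the Casimir) commutes with $X_j$, which is the same fact in operator-theoretic language.
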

\begin{proof}
Consider the decomposition of $L^2(G)=\bigoplus_{k=1}^\infty\mathcal{H}_k$ into eigenspaces for $\Delta_G$, provided by Peter-Weyl theorem \cite{St0}. Thus, $\mathcal{H}_k\subset C_0^\infty(G)$ and $\Delta_Gf=-\mu_kf$ for $f\in\mathcal{H}_k$, where $0<\mu_1<\mu_2<\ldots$ is the sequence of of eigenvalues of $-\Delta_G$. 
Fix $f=\sum_{k=1}^N f_k$, with $f_k\in\mathcal{H}_k$, $k=1,\,2,\,\ldots,\,N$, and put
$g=1_ER_jf/|R_jf|$ ($g=0$ if the denominator is zero). Let $g=\sum_{k=1}^\infty g_k$ be the decomposition of $g$, with $g_k\in \mathcal{H}_k$ for each $k$. If $k$, $m$ are different positive integers, then $\int_G (R_jf_k)g_m=0$ and hence, integrating by parts,
\begin{equation}\label{cchain}
\begin{split}
 \int_{E}|R_jf(x)|\mbox{d}x&= \int_{G} R_jf(x)\, g(x)\,\mbox{d}x\\
 &=\sum_{k=1}^N \int_{G}R_jf_k(x)\, g_k(x)\,\mbox{d}x\\
&=-\sum_{k=1}^N \int_{G}f_k(x)\, R_jg_k(x)\,\mbox{d}x\\
&=-\int_{G}f(x)R_jg(x)\,\mbox{d}x.
\end{split}
\end{equation}
Now, to prove (i), we bound the latter expression with the use of Young's inequality: it does not exceed
\begin{align*}
K\!\int_{G}\Psi(|f(x)|)\,\mbox{d}x+K\int_{G}\Phi(|R_jg(x)|/K)\,\mbox{d}x\leq  K\int_{G}\Psi(|f(x)|)\,\mbox{d}x+L(K)||g||_{L^1(G)}.
\end{align*}
Here in the last passage we have used \eqref{pg1} and the fact that $g$ takes values in $[-1,1]$. It suffices to note that $||g||_{L^1(G)}\leq |E|$ and use a standard density argument to obtain \eqref{logriesz} for arbitrary $f$. To prove (ii), we use H\"older inequality and \eqref{pg2} to bound the expression \eqref{cchain} from above by $ ||f||_{L^p(G)}||R_jg||_{L^q(G)}\leq C_p||f||_{L^p(G)}||g||^{1/q}_{L^1(G)}\leq C_p||f||_{L^p(G)} |E|^{1/q}$, which is  \eqref{weakriesz}.
\end{proof}

\subsection{Logarithmic and weak-type inequalities for Riesz transforms on spheres}\label{SphereRiesz}
The purpose of this section is to analyze the behavior of Riesz transforms on the unit sphere $\mathbb{S}^{n-1}=\{x\in \R^n:|x|=1\}$ equipped with the standard Riemannian metric and normalized $SO(n)$ invariant measure. The case $n=2$ is classical and well understood, so from now on we assume that $n\geq 3$.  We have that $\mathbb{S}^{n-1}$ is a Lie group only for $n=3$, so in general the methodology developed in the previous section does not apply and we need a new approach. 

Actually, we will work with two non-equivalent notions of Riesz transforms on the spheres (see e.g. Arcozzi and Li \cite{AX} for an overview of various types of Riesz transforms on $\mathbb{S}^{n-1}$). Both these transforms have been studied quite intensively in the literature. The two possibilities arise from the fact that there are two natural ways to ``fill in''  $\mathbb{S}^{n-1}$ so that it is the boundary of an $n$-dimensional Riemannian manifold. Let us analyze these separately.
 
Firstly, one can express $\mathbb{S}^{n-1}$ as the boundary of the cylinder $\mathbb{S}^{n-1}\times \R$, and this leads to the $R^{\mathbb{S}^{n-1}}$ already introduced at the beginning of the paper. For a fixed $1\leq \ell<m\leq n$, consider the differential operator
\begin{equation}\label{defT}
 \mathcal{T}_{\ell m}=x_\ell\partial_m-x_m\partial_\ell.
\end{equation}
If $x_\ell+ix_m=re^{i\theta}$, then $\mathcal{T}_m=\partial/\partial \theta$ is the derivative with respect to the angular coordinate in the $(x_\ell,x_m)$ plane and hence is a well defined vector field on $\mathbb{S}^{n-1}$. There is a useful formula which relates these vector fields to the spherical gradient $\nabla_{\mathbb{S}^{n-1}}$. Namely, if $f:\mathbb{S}^{n-1}\to \R$ is a smooth function, then
\begin{equation}\label{grad}
 |\nabla_{\mathbb{S}^{n-1}}f|=\left(\sum_{\ell<m}|\mathcal{T}_{\ell m}f|^2\right)^{1/2}.
\end{equation}
We define the directional Riesz transform (of cylinder type) by
$$ Q^c_{\ell m}=\mathcal{T}_{\ell m}\circ (-\Delta_{\mathbb{S}^{n-1}})^{-1/2}$$
and an auxiliary cylindrical Riesz transform $Q^c$ as the vector $(Q^c_{\ell m})_{1\leq \ell<m\leq n}$. Note that by \eqref{grad}, we have
$$ |R^{\mathbb{S}^{n-1}}|=|Q^c|,$$
so the analysis of $R^{\mathbb{S}^{n-1}}$ reduces to that of $Q^c$.

We turn to the second type of Riesz transform on $\mathbb{S}^{n-1}$ (cf. Kor\'anyi and V\'agi \cite{KV1, KV2}). Let $\mathcal{H}_k$ denote the space of spherical harmonics of degree $k$ and let
$$ \mathcal{E}_0=\left\{ f:\mathbb{S}^{n-1}\to\R\;:\;f=\sum_{k=1}^N f_k,\,\,f_k\in \mathcal{H}_k,\,\,N=1,\,2,\,\ldots\right\}$$
be the space of harmonic polynomials with null average on $\mathbb{S}^{n-1}$. For a fixed $f\in \mathcal{E}_0$, let $H$ be the solution in $\mathbb{B}^n$ of the Neumann problem with boundary data $f$, normalized so that $H(0)=0$. This will be expressed by the equation
$$ \left(\frac{\partial}{\partial \nu}\right)^{-1}f=H|_{\mathbb{S}^{n-1}},$$
where $\nu$ is the outward pointing normal vector to $\mathbb{S}^{n-1}$. One easily extends $(\partial/\partial \nu)^{-1}$ to $L^2_0(\mathbb{S}^{n-1})$ by the following formula: if $f=\sum_{k\geq 1} f_k$ is the decomposition of $f$ into spherical harmonics, then $(\partial/\partial\nu)^{-1}f=\sum_{k\geq 1}f_k/k$. Then $R^b$, \emph{the Riesz transform of ball type}, is defined by the identity
$$ R^b=\nabla_{\mathbb{S}^{n-1}}\circ \left(\frac{\partial}{\partial \nu}\right)^{-1}.$$
We will also work with the directional Riesz transform (of ball type), given by 
$$ Q^b_{\ell m}=\mathcal{T}_{\ell m}\circ \left(\frac{\partial }{\partial \nu}\right)^{-1},$$
as well as the auxiliary Riesz transform of ball type, defined by $Q^b=(Q^b_{\ell m})_{1\leq \ell<m\leq n}$. Applying \eqref{grad}, it is easy to check that $|R^b|=|Q^b|$ and thus it suffices to study the behavior of the operator $Q^b$.

Now we will describe the probabilistic representation of the above Riesz transforms. Let $B=(B^1,B^2,\ldots,B^n)$ be the standard Brownian motion in $\R^n$, starting from $0$, and let $\tau=\inf\{t\geq 0:B_t\notin\mathbb{B}^n\}$ be the first exit time of $B$ from the unit ball. Note that $B_\tau$ has the uniform distribution on $\mathbb{S}^{n-1}$. Let $A$ be a continuous function on the closed unit ball, with values in the class of $n\times n$ matrices. This function gives rise to the following martingale transformer: if $f\in C^\infty(\mathbb{S}^{n-1})$ and $F$ denotes its Poisson extension to $\mathbb{B}^n$, then
$$ A*F=\left(\int_0^{\tau\wedge t}A(B_s)\nabla_{\R^n}F(B_s)\cdot \mbox{d}B_s\right)_{t\geq 0}.$$
We define the $A$-transform of $f$ by the conditional expectation
$$ T_Af(x)=\E\big[A*F|B_\tau=x\big], \qquad x\in \mathbb{S}^{n-1}.$$
The connection between the operators $T_A$ and directional Riesz transforms is explained in the following statement, see Arcozzi \cite{Ar}.

\begin{theorem}\label{repr_sphere}
For given $1\leq \ell<m\leq n$, a function $\varphi:[0,1]\to \R$ and $x\in \overline{\mathbb{B}}^n$, let $A_{\ell m}(x)$ be the matrix with entries
$$ A_{\ell m}^{ij}(x)=\begin{cases}
\varphi(|x|^2) & \mbox{if }i=\ell,\,j=m,\\
-\varphi(|x|^2) & \mbox{if }i=m,\,j=\ell,\\
0 & \mbox{otherwise}.
\end{cases}$$

(i) If $\varphi\equiv 1$, then $ T_{A_{\ell m}}=Q_{\ell m}^b.$

(ii) Let $\varphi$ be defined by the formula
$$ \varphi(e^{-2t/(n-2)})=\frac{\int_0^t I_0(s)\mbox{d}s}{e^t-1},\qquad t\geq 0,$$
where $I_0(z)=\sum_{j=0}^\infty (z/2)^{2j}/(j!)^2$, $z\in\mathbb{C}$, is the modified Bessel function of order $0$. Then 
$T_{A_{\ell m}}=Q_{\ell m}^c.$ 
\end{theorem}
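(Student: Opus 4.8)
The plan is to follow Arcozzi \cite{Ar} and reduce both identities to the action of $T_{A_{\ell m}}$ on spherical harmonics. Recall that if $f_k$ is a spherical harmonic of degree $k\geq 1$ then $(\partial/\partial\nu)^{-1}f_k=f_k/k$ and $(-\Delta_{\mathbb{S}^{n-1}})^{-1/2}f_k=(k(k+n-2))^{-1/2}f_k$, so $Q^b_{\ell m}f_k=k^{-1}\mathcal{T}_{\ell m}f_k$ and $Q^c_{\ell m}f_k=(k(k+n-2))^{-1/2}\mathcal{T}_{\ell m}f_k$. Since finite linear combinations of spherical harmonics are dense, it suffices to prove $T_{A_{\ell m}}f_k=C_k(\varphi)\,\mathcal{T}_{\ell m}f_k$ with $C_k(\varphi)$ equal to $1/k$ in case (i) and to $(k(k+n-2))^{-1/2}$ in case (ii); and to identify $C_k(\varphi)$ it is enough to test against an arbitrary spherical harmonic $g_j$ of degree $j\geq 1$. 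Throughout, $\sigma$ denotes the normalized $SO(n)$-invariant measure, which is also the law of $B_\tau$.

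Let $F$, $G$ be the (homogeneous harmonic polynomial) extensions of $f_k$, $g_j$ to $\overline{\mathbb{B}^n}$. Because $g_j$ has vanishing average on $\mathbb{S}^{n-1}$ we have $G(0)=0$, so It\^o's formula gives $g_j(B_\tau)=\int_0^\tau \nabla G(B_s)\cdot\mathrm{d}B_s$; using the tower property of conditional expectation together with the polarization of the It\^o isometry,
\[
\int_{\mathbb{S}^{n-1}} T_{A_{\ell m}}f_k\cdot g_j\,\mathrm{d}\sigma=\E\big[(A_{\ell m}*F)_\tau\, g_j(B_\tau)\big]=\E\int_0^\tau \big\langle A_{\ell m}(B_s)\nabla F(B_s),\nabla G(B_s)\big\rangle\,\mathrm{d}s.
\]
By the occupation-time formula for Brownian motion killed at $\tau$, the last expression equals $\int_{\mathbb{B}^n}\Gamma(y)\,\langle A_{\ell m}(y)\nabla F(y),\nabla G(y)\rangle\,\mathrm{d}y$, where $\Gamma(y)=c_n(|y|^{2-n}-1)$ is the Green function (with respect to Lebesgue measure and the generator $\tfrac12\Delta$) of the unit ball with pole at the origin.

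Now $\langle A_{\ell m}(y)\nabla F,\nabla G\rangle=\varphi(|y|^2)\big(\partial_m F\,\partial_\ell G-\partial_\ell F\,\partial_m G\big)$, and $\partial_m F\,\partial_\ell G-\partial_\ell F\,\partial_m G=\operatorname{div}\!\big(G\,\partial_m F\,e_\ell-G\,\partial_\ell F\,e_m\big)$. Since $F$, $G$ are homogeneous of degrees $k$, $j$, I would separate radial and angular variables; the divergence theorem on $\mathbb{B}^n$, using that the normal component of $G\,\partial_m F\,e_\ell-G\,\partial_\ell F\,e_m$ on $\mathbb{S}^{n-1}$ is $G\,\mathcal{T}_{\ell m}F$, gives
\[
\int_{\mathbb{S}^{n-1}}\big(\partial_m F\,\partial_\ell G-\partial_\ell F\,\partial_m G\big)\,\mathrm{d}\sigma=(k+j+n-2)\int_{\mathbb{S}^{n-1}}(\mathcal{T}_{\ell m}f_k)\,g_j\,\mathrm{d}\sigma.
\]
Because $\mathcal{T}_{\ell m}f_k$ is again a spherical harmonic of degree $k$, this vanishes unless $j=k$; hence $T_{A_{\ell m}}f_k$ is a spherical harmonic of degree $k$ and, collecting the radial integral,
\[
T_{A_{\ell m}}f_k=C_k(\varphi)\,\mathcal{T}_{\ell m}f_k,\qquad C_k(\varphi)=\frac{2(2k+n-2)}{n-2}\int_0^1 (r^{2-n}-1)\,\varphi(r^2)\,r^{2k+n-3}\,\mathrm{d}r,
\]
where the constant $2/(n-2)$ is forced by $\int_0^1(r^{2-n}-1)r^{2k+n-3}\,\mathrm{d}r=\tfrac{n-2}{2k(2k+n-2)}$, which already yields $C_k(1)=1/k$ and proves (i).

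For (ii), substituting $r=e^{-t/(n-2)}$ turns the factor $r^{2-n}-1$ into $e^t-1$, which cancels the denominator in the definition of $\varphi$, so that
\[
C_k(\varphi)=\frac{2(2k+n-2)}{(n-2)^2}\int_0^\infty\Big(\int_0^t I_0(s)\,\mathrm{d}s\Big)e^{-\lambda t}\,\mathrm{d}t=\frac{2(2k+n-2)}{(n-2)^2}\cdot\frac{1}{\lambda}\,\mathcal{L}[I_0](\lambda),\qquad \lambda=\frac{2k+n-2}{n-2}.
\]
Since $k\geq 1$ we have $\lambda>1$, so the classical Laplace transform identity $\mathcal{L}[I_0](\lambda)=(\lambda^2-1)^{-1/2}$ applies; combined with $\lambda^2-1=4k(k+n-2)/(n-2)^2$ everything collapses to $C_k(\varphi)=(k(k+n-2))^{-1/2}$, which is exactly the multiplier of $Q^c_{\ell m}$. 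A routine density argument extends both identities from $\mathcal{E}_0$ to all of $C^\infty(\mathbb{S}^{n-1})$. The main obstacle is bookkeeping rather than conceptual: one must keep the normalizations consistent (the Green function of $\tfrac12\Delta$ versus the normalized $SO(n)$-invariant measure, and the polar-coordinate Jacobian) so that the constant in $C_k(\varphi)$ comes out right, and one must justify the interchange of expectation and limit in the second step — which is harmless, since $f$ smooth makes $\nabla F$ bounded on $\overline{\mathbb{B}^n}$ and $\E\tau<\infty$.
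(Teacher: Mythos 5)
The paper itself offers no proof of this statement — it simply quotes Arcozzi \cite{Ar} — so the relevant question is whether your verification stands on its own, and it essentially does: the chain (tower property, polarized It\^o isometry, occupation-time/Green-function formula, the divergence identity $\partial_mF\,\partial_\ell G-\partial_\ell F\,\partial_mG=\operatorname{div}(G\,\partial_mF\,e_\ell-G\,\partial_\ell F\,e_m)$, homogeneity giving the factor $k+j+n-2$, and the Laplace transform $\int_0^\infty I_0(s)e^{-\lambda s}\,\mbox{d}s=(\lambda^2-1)^{-1/2}$ with $\lambda=(2k+n-2)/(n-2)>1$) is correct and reproduces exactly the multipliers $1/k$ and $(k(k+n-2))^{-1/2}$; this is the same spherical-harmonics computation that underlies Arcozzi's argument. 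Two points should be tightened. First, do not let the constant in $C_k(\varphi)$ be ``forced'' by the requirement $C_k(1)=1/k$: that makes part (i) circular. Instead compute it directly: the Green function of $\tfrac12\Delta$ in $\mathbb{B}^n$ with pole at $0$ is $\frac{2}{(n-2)\omega_{n-1}}\big(|y|^{2-n}-1\big)$, and since the boundary pairing is taken with respect to the normalized measure, the product $c_n\,\omega_{n-1}=2/(n-2)$ is exactly what produces your prefactor $\frac{2(2k+n-2)}{n-2}$; with this in hand, (i) and (ii) both follow from the radial integrals as you computed. Second, to conclude that $T_{A_{\ell m}}f_k$ is a multiple of $\mathcal{T}_{\ell m}f_k$ you test only against harmonics of degree $j\geq 1$; you also need the degree-zero component, i.e. $\int_{\mathbb{S}^{n-1}}T_{A_{\ell m}}f_k\,\mbox{d}\sigma=\E\big[(A_{\ell m}*F)_\tau\big]=0$, which is immediate because the stochastic integral has mean zero ($\nabla F$ bounded, $\E\tau<\infty$), and the one-line remark that $\mathcal{T}_{\ell m}$ commutes with $\Delta$ so that $\mathcal{T}_{\ell m}f_k\in\mathcal{H}_k$. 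With these small repairs the argument is complete.
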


We are ready to establish the bounds for Riesz transforms. We start with the vectorial setting.

\begin{theorem}\label{sphere1} 
(i) For any $K>2/\pi$, any $f:\mathbb{S}^{n-1}\to \R$ with $\int_{\mathbb{S}^{n-1}}\Psi(|f|)<\infty$ and any Borel subset $E$ of $\mathbb{S}^{n-1}$ we have
\begin{equation}\label{vpsc1}
\int_E \left|R^{\mathbb{S}^{n-1}}f(x)\right|\mbox{d}x\leq 2K\int_{\mathbb{S}^{n-1}}\Psi(|f(x)|)\mbox{d}x+ \frac{|E|}{K-1},
\end{equation}
\begin{equation}\label{vpsb1}
\int_E |R^{b}f(x)|\mbox{d}x\leq 2(n-1)^{1/2}K\int_{\mathbb{S}^{n-1}}\Psi(|f(x)|)\mbox{d}x+ \frac{(n-1)^{1/2}|E|}{K-1}.
\end{equation}

(ii) For any $1<p<\infty$, any $f\in L^p(\mathbb{S}^{n-1})$ and any Borel subset $E$ of $\mathbb{S}^{n-1}$ we have
\begin{equation}\label{vpsc2}
\int_E \left|R^{\mathbb{S}^{n-1}}f(x)\right|\mbox{d}x\leq 2K_p||f||_{L^p(\mathbb{S}^{n-1})}|E|^{1-1/p},
\end{equation}
\begin{equation}\label{vpsb2}
\int_E |R^{b}f(x)|\mbox{d}x\leq 2(n-1)^{1/2}K_p||f||_{L^p(\mathbb{S}^{n-1})}|E|^{1-1/p}.
\end{equation}
\end{theorem}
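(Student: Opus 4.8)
The plan is to mimic the proof of Theorem~\ref{vR}, using Theorem~\ref{repr_sphere} as the probabilistic input and Theorems~\ref{log_thm} and~\ref{weak_thm} as the underlying martingale estimates; the only new ingredient is the accounting of the dimensional constants. By a standard density argument the harmonic polynomials with null average are dense in $L^p(\mathbb S^{n-1})$ and in the relevant Orlicz class, so we may assume $f\in\mathcal E_0$ and that its Poisson extension $F$ to $\mathbb B^n$ is smooth on $\overline{\mathbb B}{}^n$. Let $B$ be Brownian motion in $\R^n$ from $0$, $\tau=\inf\{t:B_t\notin\mathbb B^n\}$ (so $B_\tau$ is uniform on $\mathbb S^{n-1}$), and set $X_t=F(B_{\tau\wedge t})$. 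Since $F$ is harmonic and $F(0)=\int_{\mathbb S^{n-1}}f=0$, $X$ is a continuous real martingale with $X_0=0$, $\mathrm d[X,X]_t=|\nabla F(B_t)|^2\,\mathrm dt$, and $X_\tau=f(B_\tau)$, hence $\|X_\tau\|_{L^p}=\|f\|_{L^p(\mathbb S^{n-1})}$ and $\E\,\Psi(|X_\tau|)=\int_{\mathbb S^{n-1}}\Psi(|f|)$.

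\emph{Ball type.} With $\mathcal A=(A_{\ell m})_{1\le\ell<m\le n}$ and $\varphi\equiv1$ (Theorem~\ref{repr_sphere}(i)) I would form the $\R^{\binom n2}$-valued continuous martingale $Z=(A_{\ell m}*F)_{\ell<m}$, $Z_0=0$, which by Theorem~\ref{repr_sphere} satisfies $\E[Z_\infty\mid B_\tau]=Q^bf(B_\tau)$, while $|R^bf|=|Q^bf|$ by \eqref{grad}. The key computation is that, since each index $1\le k\le n$ lies in exactly $n-1$ of the pairs,
\[\mathrm d[Z,Z]_t=\sum_{\ell<m}\bigl|A_{\ell m}(B_t)\nabla F(B_t)\bigr|^2\,\mathrm dt=(n-1)\,|\nabla F(B_t)|^2\,\mathrm dt=(n-1)\,\mathrm d[X,X]_t,\]
so $(n-1)^{-1/2}Z$ is differentially subordinate to $X$. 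Now I would feed $\bigl(X,(n-1)^{-1/2}Z\bigr)$ into Theorem~\ref{log_thm} — more precisely, use the intermediate estimate \eqref{intin} from its proof — to get $\E\,\max\{(n-1)^{-1/2}|Z_\infty|-\tfrac1{2(K-1)},0\}\le K\,\E\,\Psi(|f(B_\tau)|)$; convexity of $w\mapsto\max\{|w|-c,0\}$ and conditional Jensen upgrade this to the analogous bound for $(n-1)^{-1/2}|Q^bf(B_\tau)|$, and then the splitting of an arbitrary Borel $E$ at level $(n-1)^{1/2}(K-1)^{-1}$, carried out exactly as at the end of the proof of Theorem~\ref{vR}, yields \eqref{vpsb1} (the constant factor in front, which that argument shows may be taken to be $2$, enters in the passage from the martingale bound to the bound on $R^b$). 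Replacing Theorem~\ref{log_thm} by Theorem~\ref{weak_thm} and inserting the scaling $X\mapsto X/\lambda$, $Z\mapsto Z/\lambda$ with optimization in $\lambda$ as in the proof of Theorem~\ref{weak_thm} gives \eqref{vpsb2} in the same way.

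\emph{Cylinder type, and the main obstacle.} Again $|R^{\mathbb S^{n-1}}f|=|Q^cf|$ by \eqref{grad}, and Theorem~\ref{repr_sphere}(ii) realizes $Q^c$ through the same $\mathbb B^n$-transforms with $\varphi$ the Bessel kernel; since $|\varphi|\le1$ (which is the elementary inequality $\int_0^tI_0(s)\,\mathrm ds\le\int_0^te^s\,\mathrm ds=e^t-1$, because $I_0\le\cosh$), the computation above still gives differential subordination, but only with constant $(n-1)^{1/2}$. This would produce \eqref{vpsc1}--\eqref{vpsc2} with a spurious factor $(n-1)^{1/2}$, so the actual point of these two estimates is that the dimensional factor disappears; this is the hard part. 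To remove it I would represent $R^{\mathbb S^{n-1}}=\nabla_{\mathbb S^{n-1}}\circ(-\Delta_{\mathbb S^{n-1}})^{-1/2}$ not through the ball but through the Gundy--Varopoulos ``background radiation'' picture on the product manifold $\mathbb S^{n-1}\times\R$: writing $(-\Delta_{\mathbb S^{n-1}})^{-1/2}f=\int_0^\infty\tilde F(\cdot,y)\,\mathrm dy$ with $\tilde F$ the cylindrical Poisson extension of $f$, and letting $W=(\beta,Y)$ be Brownian motion on $\mathbb S^{n-1}\times\R$ killed on $\{y=0\}$, the transform reproducing $\tfrac12R^{\mathbb S^{n-1}}f$ acts only on the $\partial_y$-component of $\mathrm d\tilde F(W)$, so the transformed martingale $\widetilde Z$ satisfies $\mathrm d[\widetilde Z,\widetilde Z]\le(|\nabla_{\mathbb S^{n-1}}\tilde F|^2+(\partial_y\tilde F)^2)\,\mathrm dt=\mathrm d[\tilde F(W),\tilde F(W)]$, i.e.\ $\widetilde Z$ is differentially subordinate to the base martingale $\tilde F(W)$ \emph{with constant $1$}, while $\tilde F(W)$ has terminal value $f(\beta)$. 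Feeding this into Theorems~\ref{log_thm}/\ref{weak_thm} and repeating the splitting and scaling arguments then gives \eqref{vpsc1}--\eqref{vpsc2} with no dependence on $n$. The genuine difficulty — the main obstacle — is setting up this cylindrical representation rigorously: non-explosion of Brownian motion on $\mathbb S^{n-1}$, boundedness of $\tilde F$, existence of the conditional-expectation limit as the starting height tends to $\infty$, and the verification that it indeed reproduces $\tfrac12R^{\mathbb S^{n-1}}f$. Granting this, the remainder is the already-established martingale inequalities together with routine convexity and density arguments.
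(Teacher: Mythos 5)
Your treatment of the ball-type bounds \eqref{vpsb1} and \eqref{vpsb2} is exactly the paper's argument: the family $\mathcal{A}=(A_{\ell m})$ from Theorem \ref{repr_sphere}(i), the identity $\sum_{\ell<m}|A_{\ell m}v|^2=(n-1)|v|^2$, differential subordination of $(n-1)^{-1/2}(\mathcal{A}*F)$ to $F(B_{\tau\wedge t})$, the intermediate bound \eqref{intin} (resp.\ Theorem \ref{weak_thm} with the scaling in $\lambda$), conditional Jensen, and the splitting of $E$ at level $(n-1)^{1/2}/(K-1)$. That half is fine.

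For the cylindrical bounds \eqref{vpsc1} and \eqref{vpsc2} you correctly identify the issue (the ball-type argument with Theorem \ref{repr_sphere}(ii) only yields the estimates with an extra factor $(n-1)^{1/2}$), but your proposed remedy is where the gap lies. You assert that a direct Gundy--Varopoulos construction on $\mathbb{S}^{n-1}\times\R$ produces a transformed martingale differentially subordinate \emph{with constant $1$} to $\tilde F(W)$ and whose conditional expectation reproduces $\tfrac12 R^{\mathbb{S}^{n-1}}f$, and you explicitly defer the verification (``granting this\dots''). This is precisely the nontrivial content: note that the naive choice of transformers indexed by the pairs $(\ell,m)$, sending the vertical direction to the fields $\mathcal{T}_{\ell m}$ of \eqref{defT}, does \emph{not} give constant $1$, since $\sum_{\ell<m}|\mathcal{T}_{\ell m}(x)|^2=n-1$ by \eqref{grad}; to get subordination constant $1$ one must integrate the full spherical gradient of the cylindrical Poisson extension against the vertical Brownian component, and then one faces tangent-bundle--valued stochastic integrals on a manifold with no global orthonormal frame, plus the convergence of the conditional expectations as the starting height tends to infinity and the identification of the limit with $-\tfrac12 R^{\mathbb{S}^{n-1}}f$ (the analogue of Theorem \ref{Arcozzi} for the cylinder over the sphere, which is not among the results you may quote). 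None of this is carried out, so \eqref{vpsc1}--\eqref{vpsc2} are not actually proved. The paper avoids the construction entirely by a transference argument: identify $\mathbb{S}^{n-1}=SO(n)/SO(n-1)$, use Arcozzi's identity $Q^c_{\ell m}f(\Pi(a))=-R^{SO(n)}_{\ell m}(f\circ\Pi\circ\rho)(\rho(a))$ with $\Pi(a)=ae_n$, $\rho(a)=a^{-1}$, note that $\Pi$ pushes the Haar measure to the uniform measure and that the Euclidean lengths of the two vectors of directional transforms coincide, and then apply the already established Lie-group bounds \eqref{vpg1} and \eqref{vpg2} of Theorem \ref{vR}, where the norm-one transformer lives on the group and is unproblematic because $SO(n)$ carries a global orthonormal frame. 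Replacing your sketched cylinder construction by this transference step closes the gap with the tools already available in the paper.
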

\begin{proof}
We will only establish (i), the second part of the Theorem is shown in a similar manner. We start with \eqref{vpsb1} in which the reasoning is slightly easier. Consider the sequence $\mathcal{A}=(A_{\ell m})_{1\leq \ell<m\leq n}$, where $A_{\ell m}$ are as in Theorem \ref{repr_sphere} (i). For a  function $f\in C_0^\infty(\mathbb{S}^{n-1})$, let $F$ denote its Poisson extension to $\overline{\mathbb{B}^n}$. Introduce the martingales
$$ \xi_t=\big(F(B_{\tau\wedge t})\big)_{t\geq 0}=\left(\int_0^{\tau\wedge t}A_{\ell m}(B_s)\nabla_{\R^n}F(B_s)\cdot \mbox{d}B_s\right)_{t\geq 0}.$$
$$ \zeta_t=\mathcal{A}*F=\left(\left(\int_0^{\tau\wedge t}A_{\ell m}(B_s)\nabla_{\R^n}F(B_s)\cdot \mbox{d}B_s\right)_{1\leq \ell<m\leq n}\right)_{t\geq 0}$$
taking values in $\R^n$ and $\R^{n(n-1)/2}$, respectively. Since 
$$\sum_{1\leq \ell<m\leq n}|A_{\ell m} v|^2=(n-1)|v|^2\qquad \mbox{ for all $v\in \R^n$},$$ we conclude that $(n-1)^{-1/2}\zeta$ is differentially subordinate to $\xi$. Therefore, the inequality \eqref{intin} yields
$$ \E \max\left\{(n-1)^{-1/2}|\zeta_\infty|-\frac{1}{2(K-1)},0\right\}\leq K\E \Psi( |\xi_\infty|).$$
An application of the conditional version of Jensen's inequality gives
\begin{align*}
 &\int_{\mathbb{S}^{n-1}}\max\left\{(n-1)^{-1/2}|Q^bf(x)|-\frac{1}{2(K-1)},0\right\}\mbox{d}x\\
 &\qquad =\int_{\mathbb{S}^{n-1}}\max\left\{(n-1)^{-1/2}|T_{\mathcal{A}}f(x)|-\frac{1}{2(K-1)},0\right\}\mbox{d}x\\
 &\qquad = \E\max\left\{(n-1)^{-1/2}|T_{\mathcal{A}}f(B_\tau)|-\frac{1}{2(K-1)},0\right\}\mbox{d}x\\
 &\qquad \leq \E\max\left\{(n-1)^{-1/2}|A*F(B_\tau)|-\frac{1}{2(K-1)},0\right\}\mbox{d}x\\
 &\qquad \leq K\E\Psi(|\xi_\infty|)\\
 &\qquad =K\int_{\mathbb{S}^{n-1}}\Psi(|f(x)|)\mbox{d}x.
 \end{align*}
Now, for a given $E\subset \mathbb{S}^{n-1}$, we consider its decomposition into
\begin{align*}
 E^-&=E\cap\{|Q^bf(x)|<(n-1)^{1/2}/(K-1)\},\\
 E^+&=E\cap\{|Q^bf(x)|\geq (n-1)^{1/2}/(K-1)\},
\end{align*}
and, as previously, consider the integrals of $Q^b$ over $E^-$ and $E^+$ separately. This yields \eqref{vpsb1}.

We turn to the estimate \eqref{vpsc1}. The above reasoning would lead to a version with an additional factor $(n-1)^{1/2}$; to remove it, we will make use of a transference-type argument which enables to deduce the bound from the corresponding estimate on $SO(n)$. Imbedding this Lie group into $\R^{n^2}$ induces a bi-invariant Riemannian metric on $SO(n)$. This metric can be normalized so that the collection $\{X_{\ell m}=[r^{j,k}_{\ell m}]_{1\leq j,k\leq n}:1\leq \ell<m\leq n\}$, with
$$ r_{\ell m}^{j,k}=\begin{cases}
1 &\mbox{if }j=m,\,k=\ell,\\
-1 & \mbox{if }j=\ell,\,k=m,\\
0 & \mbox{otherwise,}
\end{cases}$$
forms an orthonormal basis in $\mathfrak{so}(n)$. Let $m_{SO(n)}$ be the normalized Haar measure on $SO(n)$. We identify $\mathbb{S}^{n-1}$ with $SO(n)/SO(n-1)$, where $SO(n-1)$ is the stabilizer of the northern pole $e_n=(0,0,\ldots,0,1)\in\mathbb{S}^{n-1}$. Let $\Pi:SO(n)\to\mathbb{S}^{n-1}$ be the projection given by $\Pi(a)=ae_n$, the image of $e_n$ under the rotation $a$. As shown by Arcozzi \cite{Ar}, the operators $Q^c_{\ell m}$ and $R_{\ell m}^{SO(n)}$ are related to each other by the formula
$$ Q_{\ell m}^cf(\Pi(a))=-R_{\ell m}^{SO(n)}(f\circ \Pi\circ \rho)(\rho(a)),$$
where $\rho(a)=a^{-1}$. Thus the estimate \eqref{vpsc1} follows from \eqref{vpg1}. To see this, note that for any $f:\mathbb{S}^{n-1}\to \R$ we have
$$ \int_{SO(n)} f\circ \Pi \mbox{d}m_{SO(n)}=\int_{\mathbb{S}^{n-1}}f \mbox{d}x.$$
Consequently, for any $E\subset \mathbb{S}^{n-1}$,
\begin{align*}
 \int_E |Q^cf(x)|\mbox{d}x&=\int_{\Pi^{-1}(E)}|Q^cf(\Pi(a))|\mbox{d}m_{SO(n)}(a)\\
&=\int_{\Pi^{-1}(E)}|R^{SO(n)}(f\circ \Pi\circ \rho)(\rho(a))|\mbox{d}m_{SO(n)}(a)\\
&\leq 2K\int_{SO(n)}\Psi(|f\circ \Pi\circ \rho|)\mbox{d}m_{SO(n)}+\frac{m_{SO(n)}(\Pi^{-1}(E))}{K-1}\\
&=2K\int_{\mathbb{S}^{n-1}}\Psi(|f(x)|)\mbox{d}x+\frac{|E|}{K-1}.
\end{align*}
The proof is complete.
\end{proof}

Finally, let us prove the logarithmic and weak-type bounds for directional Riesz transforms.

\begin{theorem}\label{sphere2}
Let $1\leq \ell<m\leq n$ be fixed and let $Q\in \{Q_{\ell m}^c,Q_{\ell m}^b\}$. 

(i) For $K>2/\pi$, any $f:\mathbb{S}^{n-1}\to \R$ with $\int_{\mathbb{S}^{n-1}}\Psi(|f|)<\infty$, and any Borel subset $E$ of $\mathbb{S}^{n-1}$ we have
\begin{equation}\label{rs3}
\int_E |Qf(x)|\mbox{d}x\leq K\int_{\mathbb{S}^{n-1}}\Psi(|f(x)|)\mbox{d}x+L(K)\cdot|E|.
\end{equation}

(ii) For all $1<p<\infty$, $f\in L^p(\mathbb{S}^{n-1})$ and any Borel subset $E$ of $\mathbb{S}^{n-1}$ we  have
\begin{equation}\label{rs4}
\int_E|Qf(x)|\mbox{d}x\leq C_p||f||_{L^p(\mathbb{S}^{n-1})}|E|^{1-1/p}.
\end{equation}
\end{theorem}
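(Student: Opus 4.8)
The plan is to follow the template of the Lie group results (Theorems \ref{group} and \ref{group1}), with the spherical harmonic calculus on $\mathbb{S}^{n-1}$ playing the role of the representation theory of $G$. The argument splits into two stages. First I would establish, for the scalar operator $Q\in\{Q^c_{\ell m},Q^b_{\ell m}\}$ and $f\in L^\infty(\mathbb{S}^{n-1})$, the two auxiliary bounds
\[
\int_{\mathbb{S}^{n-1}}\Phi\bigl(|Qf(x)|/K\bigr)\mbox{d}x\leq\frac{L(K)\|f\|_{L^1(\mathbb{S}^{n-1})}}{K}\qquad(K>2/\pi,\ \|f\|_{L^\infty}\leq 1)
\]
and
\[
\|Qf\|_{L^q(\mathbb{S}^{n-1})}\leq C_p\|f\|_{L^1(\mathbb{S}^{n-1})}^{1/q}\|f\|_{L^\infty(\mathbb{S}^{n-1})}^{1/p}\qquad(1<q<\infty),
\]
which are the exact analogues of \eqref{pg1}--\eqref{pg2}. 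Then I would run a duality argument identical in structure to the proof of Theorem \ref{group1} to deduce \eqref{rs3} and \eqref{rs4}.

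For the auxiliary bounds I would invoke the probabilistic representation of Theorem \ref{repr_sphere}: $Qf=T_{A_{\ell m}}f=\E\bigl[A_{\ell m}*F\mid B_\tau\bigr]$, where $F$ is the Poisson extension (case $Q^c$) or the harmonic extension (case $Q^b$) of $f$ to $\mathbb{B}^n$ and $A_{\ell m}(x)$ is the antisymmetric matrix built from $\varphi(|x|^2)$. Set $\xi_t=F(B_{\tau\wedge t})$ and $\eta_t=(A_{\ell m}*F)_t$; these are real-valued martingales, and the covariance formula gives $\mbox{d}[\xi,\eta]_t=\langle\nabla_{\R^n}F(B_t),A_{\ell m}(B_t)\nabla_{\R^n}F(B_t)\rangle\,\mbox{d}t=0$ because $A_{\ell m}$ is antisymmetric, so $\xi$ and $\eta$ are orthogonal. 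Also $\mbox{d}[\eta,\eta]_t=\varphi(|B_t|^2)^2\bigl(|\partial_\ell F|^2+|\partial_m F|^2\bigr)(B_t)\,\mbox{d}t\leq\mbox{d}[\xi,\xi]_t$ once one checks $|\varphi|\leq 1$ --- trivial for $Q^b$ ($\varphi\equiv 1$), and for $Q^c$ a consequence of $I_0(s)\leq e^s$, which yields $0\leq\int_0^t I_0(s)\,\mbox{d}s\leq e^t-1$. Since moreover $\eta_0=0$ and $\|\xi\|_\infty=\|F\|_{L^\infty(\mathbb{B}^n)}=\|f\|_{L^\infty(\mathbb{S}^{n-1})}$, the pair $(\xi,\eta)$ falls under Theorems \ref{martinth} and \ref{martingale_thm3}. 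Applying those theorems, then Jensen's inequality for the conditional expectation (with the convex functions $t\mapsto\Phi(t/K)$ and $t\mapsto t^q$), and finally using that $B_\tau$ is uniform on $\mathbb{S}^{n-1}$, gives the two displayed inequalities.

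For the duality stage, decompose $L^2_0(\mathbb{S}^{n-1})=\bigoplus_{k\geq 1}\mathcal{H}_k$ into spherical harmonics. The vector field $\mathcal{T}_{\ell m}$ of \eqref{defT} lies in $\mathfrak{so}(n)$, hence maps each $SO(n)$-invariant subspace $\mathcal{H}_k$ into itself and, being divergence-free, is skew-adjoint on $L^2(\mathbb{S}^{n-1})$; the operators $(-\Delta_{\mathbb{S}^{n-1}})^{-1/2}$ and $(\partial/\partial\nu)^{-1}$ act as scalars on each $\mathcal{H}_k$. Fixing a finite sum $f=\sum_{k=1}^N f_k$ with $f_k\in\mathcal{H}_k$, and putting $g=1_E\operatorname{sgn}(Qf)$ with decomposition $g=\sum_{k\geq 0}g_k$, the fact that $Qf_k\in\mathcal{H}_k$ is orthogonal to $\mathcal{H}_m$ for $m\neq k$ and to the constants, combined with the commutation of $Q$ with the scalar action and the skew-adjointness of $\mathcal{T}_{\ell m}$, gives
\[
\int_E|Qf(x)|\,\mbox{d}x=\int_{\mathbb{S}^{n-1}}Qf(x)g(x)\,\mbox{d}x=-\int_{\mathbb{S}^{n-1}}f(x)Qg(x)\,\mbox{d}x,
\]
precisely as in the chain \eqref{cchain}. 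Since $g$ takes values in $[-1,1]$ we have $\|g\|_{L^\infty}\leq 1$ and $\|g\|_{L^1}\leq|E|$; Young's inequality for the conjugate pair $\Phi,\Psi$ together with the auxiliary $\Phi$-bound applied to $Qg$ yields \eqref{rs3}, while H\"older's inequality together with the auxiliary $L^q$-bound yields \eqref{rs4}. A routine density argument extends both estimates from finite sums of spherical harmonics to all $f$ with $\int_{\mathbb{S}^{n-1}}\Psi(|f|)<\infty$, respectively to all $f\in L^p(\mathbb{S}^{n-1})$.

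I expect the main technical obstacle to be in the first stage: checking rigorously that $|\varphi|\leq 1$ in the cylinder case (the Bessel estimate $\int_0^t I_0(s)\,\mbox{d}s\leq e^t-1$ together with positivity of $\varphi$) and that the orthogonality $\mbox{d}[\xi,\eta]=0$ and the differential subordination $\mbox{d}[\eta,\eta]\leq\mbox{d}[\xi,\xi]$ genuinely hold along the Brownian motion stopped at $\partial\mathbb{B}^n$ --- this is where the difference between the two ``fillings'' of $\mathbb{S}^{n-1}$ actually shows up. The duality stage presents no real difficulty beyond making the reduction to finite sums of spherical harmonics and the interchange of $Q$ with the harmonic decomposition precise; once Theorem \ref{repr_sphere} is available this is mechanical and essentially identical to the Lie group case.
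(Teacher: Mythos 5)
Your proposal is correct and follows essentially the same route as the paper: the probabilistic representation $Qf=T_{A_{\ell m}}f$ with the orthogonality $\langle A_{\ell m}v,v\rangle=0$ and the subordination $0\le\varphi\le 1$ (via $I_0(s)\le e^s$ in the cylindrical case) feeding into Theorems \ref{martinth} and \ref{martingale_thm3} to get the dual $\Phi$- and $L^q$-bounds, followed by the duality $\int_{\mathbb{S}^{n-1}}Qf\,g=-\int_{\mathbb{S}^{n-1}}f\,Qg$ with $g=\chi_E\operatorname{sgn}(Qf)$ and Young/H\"older. The only (harmless) variation is your justification of the duality identity through the $SO(n)$-invariance of each $\mathcal{H}_k$ and skew-adjointness of $\mathcal{T}_{\ell m}$, where the paper instead checks the cross terms of different degrees by a Green's formula computation.
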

\begin{proof}
To show \eqref{rs3}, we establish first the following dual estimate: if $f:\mathbb{S}^{n-1}\to [-1,1]$, then 
\begin{equation}\label{rs1}
 \int_{\mathbb{S}^{n-1}} \Phi\left(|Qf(x)|/K\right)\mbox{d}\chi(x)\leq \frac{L(K)||f||_{L^1(\mathbb{S}^{n-1})}}{K}.
\end{equation}
The random variable $B_\tau$ has the uniform distribution on $\mathbb{S}^{n-1}$, so in view of Jensen's inequality,
$$ \int_{\mathbb{S}^{n-1}} \Phi\left(|Qf(x)|/K\right)\mbox{d}x=\E \Phi(|T_{A_{\ell m}}f(B_\tau)|)\leq \E \Phi(|(A_{\ell m}*F)_\infty|).$$
However, we have $\langle A_{\ell m} v,v\rangle=0$ and $||A_{\ell m}v||\leq ||v||$ for any $v\in \R^n$, since $0\leq \varphi\leq 1$. The latter bound is obvious in the ball type, in the cylindrical case one has to write down the expansion of $I_0$ to get that $0<I_0(s)\leq e^s$ and $I_0(0)=1$. Thus, $A_{\ell m}*F$ is orthogonal and differentially subordinate to  the martingale $F(B)=(\int_0^t \nabla_{\R^n}F(B_s)\cdot \mbox{d}B_s)_{t\geq 0}$ and hence, by Theorem \ref{martingale_thm2},
$$ \E \Phi(|(A_{\ell m}*F)_\infty|)\leq \frac{L(K)||F(B_\tau)||_1}{K}=\frac{L(K)||f||_{L^1(\mathbb{S}^{n-1})}}{K},$$
which is \eqref{rs1}. To deduce \eqref{rs3}, note that 
\begin{equation}\label{duality}
\int_{\mathbb{S}^{n-1}} Qf(x)g(x)\mbox{d}x=-\int_{\mathbb{S}^{n-1}} f(x)Qg(x)\mbox{d}x
\end{equation}
for all $f,\,g\in L^2(\mathbb{S}^{n-1})$. Let us briefly prove it. In the cylindrical case, if $f$, $g\in\mathcal{H}_k$, then
\begin{align*}
\int_{\mathbb{S}^{n-1}} Qf(x)g(x)\mbox{d}x&=
 \int_{\mathbb{S}^{n-1}} \mathcal{T}_{\ell m}(\Delta_{\mathbb{S}^{n-1}})^{-1/2}f(x)g(x)\mbox{d}x\\\
 &=\frac{1}{\sqrt{k(n+k-2)}}\int_{\mathbb{S}^{n-1}} T_{\ell m}f(x) g(x)\mbox{d}x\\
 &=-\frac{1}{\sqrt{k(n+k-2)}}\int_{\mathbb{S}^{n-1}} f(x) T_{\ell m}g(x)\mbox{d}x\\
 &=-\int_{\mathbb{S}^{n-1}} f(x)Qg(x)\mbox{d}x.
 \end{align*}
On the other hand, if $f$, $g$ belong to two different classes $\mathcal{H}_j$, $\mathcal{H}_k$ and we extend them to homogeneous polynomials on the whole $\R^n$, then, using Green's formula, we get
\begin{align*}
k\int_{\mathbb{S}^{n-1}}\mathcal{T}_{\ell m}f(x)g(x)\mbox{d}x
&=\int_{\mathbb{S}^{n-1}}\mathcal{T}_{\ell m}\frac{\partial f}{\partial \nu}(x)g(x)\mbox{d}x\\
&=\int_{\mathbb{S}^{n-1}}\frac{\partial}{\partial \nu}\mathcal{T}_{\ell m}f(x)g(x)\mbox{d}x\\
&=\int_{\mathbb{S}^{n-1}}\mathcal{T}_{\ell m}f(x)\frac{\partial}{\partial \nu}g(x)\mbox{d}x\\
&=\ell \int_{\mathbb{S}^{n-1}}\mathcal{T}_{\ell m}f(x)g(x)\mbox{d}x,
\end{align*}
and hence $\int_{\mathbb{S}^{n-1}}Qf(x)g(x)\mbox{d}x=-\int_{\mathbb{S}^{n-1}}f(x)Qg(x)\mbox{d}x=0$. Thus, \eqref{duality} follows by expanding $f$ and $g$ in the series of spherical harmonics. If $Q$ is of ball type, then \eqref{duality} is proved with the use of similar arguments.
 Now, pick an arbitrary Borel subset $E$ of $\mathbb{S}^{n-1}$ and put $g(x)=\chi_E(x)\cdot Qf(x)/|Qf(x)|$ for $x\in \mathbb{S}^{n-1}$ (with the convention $g=0$ if $Qf=0$). Using \eqref{duality} and then \eqref{rs1}, we obtain
\begin{align*}
 \int_E |Qf(x)|\mbox{d}x&=\int_{\mathbb{S}^{n-1}}Qf(x)g(x)\mbox{d}x\\
 &=-\int_{\mathbb{S}^{n-1}}f(x)Qg(x)\mbox{d}x\\
 &\leq  K\int_{\mathbb{S}^{n-1}}\Psi(|f(x)|)\,\mbox{d}x+K\int_{\mathbb{S}^{n-1}}\Phi(|R_jg(x)|/K)\,\mbox{d}x\\
&\leq  K\int_{\mathbb{S}^{n-1}}\Psi(|f(x)|)\,\mbox{d}x+L(K)||g||_{L^1(\mathbb{S}^{n-1})}\\
&\leq  K\int_{\mathbb{S}^{n-1}}\Psi(|f(x)|)\,\mbox{d}x+L(K)|E|
\end{align*} 
and \eqref{rs3} follows. The proof of \eqref{rs4} is similar and exploits the dual bound
$$ ||Qf||_{L^q(\mathbb{S}^{n-1})}\leq C_p||f||_{L^1(\mathbb{S}^{n-1})}^{1/q}||f||_{L^\infty(\mathbb{S}^{n-1})}^{1/p}.$$
The details are left to the reader.
\end{proof}

\subsection{Logarithmic and weak-type inequalities for Riesz transforms on Gauss space}\label{GaussRiesz}
Throughout this section, $\mathbb{S}_n$ denotes the $n-1$-dimensional sphere of radius $\sqrt{n}$, equipped with its natural Riemannian metric and $SO(n)$ invariant measure $\mu_n$ satisfying $\mu_n(\mathbb{S}_n)=1$. With $\mathcal{T}_{\ell m}$ as in \eqref{defT} and a smooth function $f:\mathbb{S}_n\to \R$, we have
\begin{equation}\label{diffS} 
\Delta_{\mathbb{S}_n}f=\frac{1}{n}\sum_{1\leq \ell<m\leq n} \mathcal{T}_{\ell m}\mathcal{T}_{\ell m}f,\qquad  |\nabla_{\mathbb{S}_n} f|^2=\frac{1}{n}\sum_{1\leq \ell<m\leq n} |\mathcal{T}_{\ell m}|^2.
\end{equation}

 A well-known and frequently used fact (cf. \cite{M}) is that many geometric objects on $\mathbb{S}_n$ pass in the limit to the corresponding objects on Gauss space; this is often referred to as \emph{Poincar\'e's limit} or \emph{Poincar\'e's observation}, though the argument can be tracked back to the work of Mehler \cite{Meh}. The purpose of this section is to present another illustration for this phenomenon. Namely, we will show how the estimates for cylindrical Riesz transforms lead to analogous bounds for the Riesz transforms associated with the Ornstein-Uhlenbeck semigroup, fundamental tools in the Malliavin calculus on the Wiener space \cite{Mal}. 

We start with the necessary notation. Let $d$ be a fixed positive integer and suppose that $\gamma_d$ is the standard Gaussian measure on $\R^d$, i.e.,
$$ \mbox{d}\gamma_d(x)=(2\pi)^{-d/2}\exp(-|x|^2/2)\mbox{d}x,\qquad x\in \R^d.$$
Let $\nabla_{\R^d}^*$ be the formal adjoint of the gradient $\nabla_{\R^d}$ in $L^2(\R^d,\gamma_d)$. Then
$$ L=\nabla_{\R^d}^*\nabla_{\R^d}=\Delta_{\R^d}-x\cdot \nabla_{\R^d}$$
is a negative operator which generates  Ornstein-Uhlenbeck semigroup in $d$ dimensions. The Riesz transform associated with $L$ is defined by
$$ R^{L}=\nabla_{\R^d}\circ (-L)^{-1/2}.$$

Next, fix $n\geq d$ and define the ``projection'' $\Pi_n:\mathbb{S}_n\to \R^d$ by $\Pi_n(x,y)=x$, where $x\in \R^d$, $y\in \R^{n-d}$ and $(x,y)\in \mathbb{S}_n$. For an arbitrary function $f:\mathbb{R}^d\to \R$, we will write $f_n=f\circ \Pi_n$. Poincar\'e's observation \cite{Meh} amounts to saying that for any measurable subset $E$ of $\R^d$ we have
$$ \lim_{n\to \infty} \int_{\mathbb{S}_n}(\chi_E)_n\mbox{d}\mu_n =\int_{\R^d} \chi_E \mbox{d}\gamma_d.$$
This can be pushed further: if a function $f:\R^n\to \R$ has polynomial growth, then
\begin{equation}\label{Mehler}
 \lim_{n\to\infty}\int_{\mathbb{S}_n} f_n\mbox{d}\mu_n =\int_{\R^d}f\mbox{d}\gamma_d.
\end{equation}
As a consequence, we obtain that for such $f$,
\begin{equation}\label{st}
\begin{split}
\lim_{n\to \infty}||\nabla_{\mathbb{S}_n}f_n||_{L^p(\mathbb{S}_n)}&=||\nabla_{\R^d} f||_{L^p(\R^d,\gamma_d)},\\
\lim_{n\to\infty}||\Delta_{\mathbb{S}_n}f_n||_{L^p(\mathbb{S}_n)}&=||Lf||_{L^p(\R^d,\gamma_d)},
\end{split}
\end{equation}
where $L$ is the generator of Ornstein-Uhlenbeck semigroup introduced above. These equalities follow immediately from \eqref{Mehler} and the identities (cf. \cite{M})
$$ |\nabla_{\mathbb{S}_n}f_n|^2=\left[\sum_{j=1}^d (\partial_jf)^2-\frac{1}{n}\left(\sum_{j=1}^d x_j\partial_jf\right)^2\right]_n$$
and
$$ \Delta_{\mathbb{S}_n}f_n=\left[\sum_{j=1}^d \partial_{jj}^2f-\frac{n-1}{n}\sum_{j=1}^d x_j\partial_jf-\frac{1}{n}\sum_{j=1}^d\sum_{k=1}^d x_jx_k\partial_{jk}^2f\right]_n.$$

Let $\mathfrak{H}_k^d$ denote the space of generalized Hermite polynomials of degree $k$ on $\R^d$, i.e., the space of those polynomials $P:\R^d\to \R$, which satisfy $\operatorname*{deg}P\leq k$ and $LP+kP=0$. This class is closely related to the space of spherical harmonics on $\R^n$. To describe the connection, pick $P\in \mathfrak{H}_k^d$, a number $n>d$, and consider the decomposition
\begin{equation}\label{decomposition}
 P_n=\sum_{j\leq k} Q_j^{n,d}(P).
\end{equation}
Here $Q_j^{n,d}$ is the $L^2(\mathbb{S}_n)$-orthogonal projection of $P$ onto $\mathcal{H}_j(\R^n)$, the space of spherical harmonics of degree $j$, extended to a homogeneous polynomial on $\R^n$. It turns out that among all the summands $Q_j^{n,d}(P)$, the term $Q_n^{n,d}(P)$ has an overwhelming size. We will need the following statement: see Lemma 6.1 and Lemma 6.2 in Arcozzi \cite{Ar}.

\begin{lemma}\label{hol}
Let $P\in \mathfrak{H}_k^d$ and consider its decomposition \eqref{decomposition}. Then for any $1\leq p<\infty$, 
\begin{equation}\label{hol1}
 \lim_{n\to \infty}||Q_j^{n,d}(P)||_{L^p(\mathbb{S}_n)}=0\qquad \mbox{if }j<k,
 \end{equation}
and
\begin{equation}\label{hol2}
 \limsup_{n\to\infty}||Q_k^{n,d}(P)||_{L^p(\mathbb{S}_n)}\leq K_{p,k,d}||P||_{L^2(\R^d,\gamma_d)},
\end{equation}
where the constant $K_{p,k,d}$ depends only on the parameters indicated.
\end{lemma}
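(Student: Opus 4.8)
\textbf{Proof plan for Lemma \ref{hol}.} The natural strategy is to reduce everything to $L^2$ estimates on the sphere, which are exact and computable via spherical harmonics, and then to bootstrap to $L^p$ for $p\geq 2$ using the hypercontractivity / reverse-H\"older phenomenon available on $\mathbb{S}_n$ (and, for $1\leq p<2$, simply by H\"older's inequality, since $\mu_n$ is a probability measure). So the first step is to record the $L^2$ versions of \eqref{hol1} and \eqref{hol2}: since the summands $Q_j^{n,d}(P)$ in \eqref{decomposition} are mutually $L^2(\mathbb{S}_n)$-orthogonal, we have $\|P_n\|_{L^2(\mathbb{S}_n)}^2=\sum_{j\leq k}\|Q_j^{n,d}(P)\|_{L^2(\mathbb{S}_n)}^2$, and by Poincar\'e's limit \eqref{Mehler} applied to $f=P^2$ we get $\lim_{n\to\infty}\|P_n\|_{L^2(\mathbb{S}_n)}^2=\|P\|_{L^2(\R^d,\gamma_d)}^2$. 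Hence it suffices to show that the ``wrong-degree'' projections vanish, i.e. $\lim_{n\to\infty}\|Q_j^{n,d}(P)\|_{L^2(\mathbb{S}_n)}=0$ for $j<k$, and then the correct-degree term automatically satisfies $\lim_{n}\|Q_k^{n,d}(P)\|_{L^2(\mathbb{S}_n)}=\|P\|_{L^2(\gamma_d)}$, which is \eqref{hol2} in the case $p=2$ with $K_{2,k,d}=1$.

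The key computation — and the heart of the argument — is the $L^2$ decay of the low-degree projections. Here I would exploit the eigenfunction structure: $P_n$, restricted to $\mathbb{S}_n$, is a polynomial of degree $k$, and decomposing it into spherical harmonics is the same as iteratively peeling off the ``trace part'' using the radial variable $|x|^2=n$ on $\mathbb{S}_n$. Concretely, a homogeneous-of-degree-$k$ polynomial $P$ on $\R^d$ can be written as $\sum_{i} |x|^{2i} h_{k-2i}$ with $h_{k-2i}$ harmonic on $\R^d$, and restricting to $\mathbb{S}_n$ replaces $|x|^{2i}$ by $n^i$; one then re-expands each $h_{k-2i}$ (viewed on $\R^n$) into $\R^n$-harmonics, which again produces factors of $n$ in the projections onto degrees below $k-2i$. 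Tracking the powers of $n$ shows each $Q_j^{n,d}(P)$ with $j<k$ carries a strictly negative power of $n$ relative to $Q_k^{n,d}(P)$, while the $L^2(\mathbb{S}_n)$-norms of the normalized harmonics stay bounded (by comparing with the Gaussian normalization through \eqref{Mehler} again). This is exactly the content of Lemmas 6.1 and 6.2 of Arcozzi \cite{Ar}, so in the write-up I would either quote that computation or reproduce its main identity; the delicate point is getting the exponents of $n$ right in the recursive peeling, and making sure the implied constants depend only on $k$ and $d$, not on $n$.

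Finally, to pass from $L^2$ to general $1\leq p<\infty$, split into two cases. For $1\leq p\leq 2$, H\"older's inequality on the probability space $(\mathbb{S}_n,\mu_n)$ gives $\|Q_j^{n,d}(P)\|_{L^p(\mathbb{S}_n)}\leq \|Q_j^{n,d}(P)\|_{L^2(\mathbb{S}_n)}$, so \eqref{hol1} and \eqref{hol2} (with $K_{p,k,d}=1$) follow immediately from the $L^2$ statements. For $p>2$, I would invoke the fact that $Q_j^{n,d}(P)$ is a spherical harmonic of fixed degree $j\leq k$ on $\mathbb{S}_n$, and on such a finite-dimensional eigenspace all $L^p$ norms are comparable with a constant independent of $n$ once suitably normalized — this is the reverse-H\"older / Nikolskii-type inequality for spherical harmonics, whose constant for degree $\leq k$ on $\mathbb{S}^{n-1}$ grows only polynomially in $k$ and is uniform in $n$ after the radius normalization (again this is established in \cite{Ar}). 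Combining this equivalence with the $L^2$ decay gives the $L^p$ decay in \eqref{hol1} and the bound \eqref{hol2} with $K_{p,k,d}$ the product of the reverse-H\"older constant and $1$. The main obstacle is really the second step: verifying the uniformity in $n$ of the spherical-harmonic reverse-H\"older constant and the clean bookkeeping of powers of $n$ in the harmonic decomposition; everything else is soft (orthogonality, H\"older, and the already-cited Poincar\'e limit \eqref{Mehler}).
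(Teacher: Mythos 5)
The paper's own ``proof'' of this lemma is a citation: the statement is taken verbatim from Arcozzi, whose Lemmas 6.1 and 6.2 are exactly \eqref{hol1} and \eqref{hol2} for all $1\leq p<\infty$. Your plan defers what you yourself call the heart of the argument to those same lemmas, so as written it is the same citation wrapped in extra scaffolding rather than an independent proof. Moreover, since Arcozzi's lemmas are already $L^p$ statements, your $L^2$-to-$L^p$ upgrade (H\"older for $1\leq p\leq 2$, a dimension-free Nikolskii-type bound for fixed-degree spherical harmonics for $p>2$) is a detour you would only need if you proved the $L^2$ decay yourself. That upgrade is in fact legitimate: for a spherical harmonic $Y$ of degree $k$ one has $\|Y\|_{L^p(\mathbb{S}_n)}\leq C(p,k)\|Y\|_{L^2(\mathbb{S}_n)}$ with $C(p,k)$ independent of $n$ (it follows from dimension-free hypercontractivity of the heat/Poisson semigroup on spheres, with $C(p,k)$ of order $(p-1)^{k/2}$ in the limit), but it is an additional input that must be stated and sourced; it is not simply ``established in \cite{Ar}''. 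The soft parts of your reduction (orthogonality of the $Q_j^{n,d}(P)$, and $\|P_n\|_{L^2(\mathbb{S}_n)}^2\to\|P\|_{L^2(\R^d,\gamma_d)}^2$ via \eqref{Mehler} applied to $P^2$) are fine.

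Where you do sketch the core computation there is a concrete error. You decompose $P$ into $\R^d$-harmonics times powers of $|x|^2$ and then assert that restricting to $\mathbb{S}_n$ replaces $|x|^{2i}$ by $n^i$. That replacement is valid only for the full $\R^n$-norm; the radial factors produced by the decomposition in $\R^d$ are powers of $x_1^2+\cdots+x_d^2$, which is not constant on $\mathbb{S}_n$. (The $\R^d$-harmonic pieces, depending only on $x_1,\ldots,x_d$, are indeed also $\R^n$-harmonic, so that part of the sketch is correct.) The peeling has to be carried out with $\R^n$-harmonics, and the coefficients then depend on $n$: for instance, on $\mathbb{S}_n$ one has $x_1^2=(x_1^2-\tfrac{1}{n}(x_1^2+\cdots+x_n^2))+1$ and $x_1^2+\cdots+x_d^2=d+(\hbox{a degree-two }\R^n\hbox{-harmonic})$. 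It is precisely these $n$-dependent correction terms, together with the cancellation against the lower-degree parts of the Hermite polynomial $P$, that produce the decay \eqref{hol1}; with the bookkeeping as you wrote it, the low-degree projections would come out with coefficients of size $n^i$ rather than tending to zero, and also note that $P\in\mathfrak{H}_k^d$ is not homogeneous, so the homogeneous components must be handled together, not separately. So either quote Arcozzi's Lemmas 6.1--6.2 outright, as the paper does, or redo the harmonic decomposition in $\R^n$ and track the powers of $n$ honestly; the version in your second paragraph would not go through as stated.
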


We turn to the main result of this section.

\begin{theorem}\label{gauss1}
(i) For any $K>2/\pi$, any $f:\R^d\to \R$ with $\int_{\R^d}\Psi(|f|)\mbox{d}\gamma_d<\infty$ and any Borel subset $E$ of $\R^d$ we have
\begin{equation}\label{vgs1}
\int_E |R^{L}f(x)|\mbox{d}\gamma_d(x)\leq 2K\int_{\R^d}\Psi(|f(x)|)\mbox{d}\gamma_d(x)+ \frac{\gamma_d(E)}{K-1}.
\end{equation}

(ii) For any $1<p<\infty$, any $f\in L^p(\R^n,\gamma_d)$ and any Borel subset $E$ of $\R^n$ we have
\begin{equation}\label{vgs2}
\int_E |R^{L}f(x)|\mbox{d}\gamma_d(x)\leq 2K_p||f||_{L^p(\R^n,\gamma_d)}\gamma_d(E)^{1-1/p}.
\end{equation}
\end{theorem}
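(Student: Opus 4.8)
The plan is to derive \eqref{vgs1} and \eqref{vgs2} from the vectorial sphere estimates \eqref{vpsc1} and \eqref{vpsc2} by Poincar\'e's limiting argument, in the spirit of \eqref{Mehler}--\eqref{st} and Lemma \ref{hol}; the feature that makes this possible is that the constants in \eqref{vpsc1}--\eqref{vpsc2} are dimension-free. I will describe the argument for the logarithmic bound \eqref{vgs1}; the weak-type bound \eqref{vgs2} is obtained verbatim, with \eqref{vpsc2} in place of \eqref{vpsc1}. First I would reduce to the case in which $f=\sum_{k=1}^N P_k$ is a finite sum of generalized Hermite polynomials $P_k\in\mathfrak{H}_k^d$ with $\int_{\R^d}f\,\mbox{d}\gamma_d=0$: such polynomials are dense in the mean-zero subspace of the Orlicz space $L^\Psi(\R^d,\gamma_d)$ (the gauge $\Psi$ satisfies a $\Delta_2$-condition), while $R^L$ is $L^2(\R^d,\gamma_d)$-bounded and of weak type $(1,1)$ on finite-dimensional Gauss space, so one approximates $f$, passes to an almost everywhere convergent subsequence of $R^Lf_m$, and applies Fatou's lemma. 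For such a polynomial $f$ we have $g:=(-L)^{-1/2}f=\sum_{k=1}^N k^{-1/2}P_k$, hence $R^Lf=\nabla_{\R^d}g$ is itself a polynomial vector field.

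Next I would transfer everything to $\mathbb{S}_n$, $n>d$. Writing $f_n=f\circ\Pi_n$ and $g_n=g\circ\Pi_n$, and using \eqref{decomposition} with a regrouping by degree, one has $f_n=\sum_{j\le N}h_j$ where $h_j=\sum_{k\ge j}Q_j^{n,d}(P_k)$ is a degree-$j$ spherical harmonic on $\mathbb{S}_n$. Since $\mathbb{S}_n$ has radius $\sqrt n$, such an $h_j$ is an eigenfunction of $-\Delta_{\mathbb{S}_n}$ with eigenvalue $j(j+n-2)/n$, so that
\[
(-\Delta_{\mathbb{S}_n})^{-1/2}f_n=\sum_{1\le j\le N}\Big(\tfrac{n}{j(j+n-2)}\Big)^{1/2}h_j,\qquad e_n:=(-\Delta_{\mathbb{S}_n})^{-1/2}f_n-g_n=\sum_{j}\sum_{k\ge j}c_{j,k}^n\,Q_j^{n,d}(P_k),
\]
with $c_{j,k}^n=(n/(j(j+n-2)))^{1/2}-k^{-1/2}$. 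For the diagonal terms $k=j$, $c_{j,j}^n\to0$ while $\|Q_j^{n,d}(P_j)\|_{L^2(\mathbb{S}_n)}$ stays bounded by \eqref{hol2}; for $k>j$, $c_{j,k}^n$ stays bounded while $\|Q_j^{n,d}(P_k)\|_{L^2(\mathbb{S}_n)}\to0$ by \eqref{hol1}. Hence $\|e_n\|_{L^2(\mathbb{S}_n)}\to0$; and because $-\Delta_{\mathbb{S}_n}e_n$ is built from the same harmonics whose eigenvalues $j(j+n-2)/n$ are bounded (by $2N$ for $n$ large), Green's identity gives $\|\nabla_{\mathbb{S}_n}e_n\|_{L^2(\mathbb{S}_n)}^2=\langle-\Delta_{\mathbb{S}_n}e_n,e_n\rangle\le 2N\|e_n\|_{L^2(\mathbb{S}_n)}^2\to0$, whence also $\|\nabla_{\mathbb{S}_n}e_n\|_{L^1(\mathbb{S}_n)}\to0$.

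Then I would apply \eqref{vpsc1} on $\mathbb{S}_n$ --- it holds there verbatim, since $R^{\mathbb{S}_n}=\nabla_{\mathbb{S}_n}\circ(-\Delta_{\mathbb{S}_n})^{-1/2}$ and the normalized measure are invariant under the rescaling identifying $\mathbb{S}_n$ with the unit sphere --- to $f_n$ and $E_n:=\Pi_n^{-1}(E)$:
\[
\int_{E_n}|R^{\mathbb{S}_n}f_n|\,\mbox{d}\mu_n\le 2K\int_{\mathbb{S}_n}\Psi(|f_n|)\,\mbox{d}\mu_n+\frac{\mu_n(E_n)}{K-1}.
\]
By \eqref{Mehler} applied to the polynomially bounded functions $\Psi(|f|)$ and $\chi_E$ (note $\Psi(|f_n|)=\Psi(|f|)\circ\Pi_n$ and $\chi_{E_n}=\chi_E\circ\Pi_n$), the right-hand side tends to $2K\int_{\R^d}\Psi(|f|)\,\mbox{d}\gamma_d+\gamma_d(E)/(K-1)$. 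For the left-hand side, I write $R^{\mathbb{S}_n}f_n=\nabla_{\mathbb{S}_n}g_n+\nabla_{\mathbb{S}_n}e_n$, so $\int_{E_n}|R^{\mathbb{S}_n}f_n|\,\mbox{d}\mu_n\ge\int_{E_n}|\nabla_{\mathbb{S}_n}g_n|\,\mbox{d}\mu_n-\|\nabla_{\mathbb{S}_n}e_n\|_{L^1(\mathbb{S}_n)}$, the last term vanishing in the limit by the previous step. Using the identity $|\nabla_{\mathbb{S}_n}g_n|^2=\big[\sum_i(\partial_i g)^2-\tfrac1n(\sum_i x_i\partial_i g)^2\big]\circ\Pi_n$ (from the derivation of \eqref{st}), together with the weak convergence of $(\Pi_n)_*\mu_n$ to $\gamma_d$ (with uniform moment bounds) and dominated convergence for this slowly varying integrand, one gets $\int_{E_n}|\nabla_{\mathbb{S}_n}g_n|\,\mbox{d}\mu_n\to\int_E|\nabla_{\R^d}g|\,\mbox{d}\gamma_d=\int_E|R^Lf|\,\mbox{d}\gamma_d$. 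Taking $\liminf_{n\to\infty}$ in the displayed inequality then yields \eqref{vgs1} for polynomial $f$, and the reduction above upgrades it to general $f$. The main obstacle is precisely that $(-\Delta_{\mathbb{S}_n})^{-1/2}f_n$ is not of the form $\varphi\circ\Pi_n$ for a fixed $\varphi$, so \eqref{st} does not apply to it directly: one is forced to isolate the error $e_n$ and control \emph{its gradient} uniformly in $n$. The device that makes this painless is to settle for a $\liminf$ on the left and bound $\|\nabla_{\mathbb{S}_n}e_n\|_{L^1}$ by $\|\nabla_{\mathbb{S}_n}e_n\|_{L^2}$, which is tamed by the boundedness of the rescaled eigenvalues $j(j+n-2)/n$ on polynomials of bounded degree; this circumvents the need for a dimension-free Bernstein inequality on $\mathbb{S}_n$. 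Everything else --- the density/Fatou reduction and the mild extension of \eqref{Mehler} to integrands $\varphi_n\circ\Pi_n$ with $\varphi_n\to\varphi$ locally uniformly and uniformly polynomially bounded --- is routine.
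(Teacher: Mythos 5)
Your proposal is correct, and it reaches \eqref{vgs1}--\eqref{vgs2} by a genuinely different arrangement of the same Poincar\'e-limit machinery. The paper feeds into the sphere bound \eqref{vpsc1} (a rescaled copy of) the \emph{differentiated} object $(-\Delta_{\mathbb{S}_n})^{1/2}P_n$, so that the left-hand side becomes the pullback-type quantity $|\nabla_{\mathbb{S}_n}P_n|$ and converges painlessly, while all the work --- Lemma \ref{hol}, the bookkeeping of the errors $\eta_n,\kappa_n$, and the convergence statements \eqref{convergence}--\eqref{convergence2} --- goes into showing that the $\Psi$-integral of $(-\Delta_{\mathbb{S}_n})^{1/2}P_n$ converges to that of $(-L)^{1/2}P$. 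You instead plug $f_n=f\circ\Pi_n$ itself into \eqref{vpsc1}, so the right-hand side converges immediately by \eqref{Mehler}, and you shift the error analysis to the Riesz-transform side: comparing $R^{\mathbb{S}_n}f_n$ with $\nabla_{\mathbb{S}_n}(g\circ\Pi_n)$, $g=(-L)^{-1/2}f$, and killing $\nabla_{\mathbb{S}_n}e_n$ in $L^1(\mu_n)\subset L^2(\mu_n)$ via \eqref{hol1}--\eqref{hol2} combined with the Green's-identity/eigenvalue bound $\|\nabla_{\mathbb{S}_n}e_n\|_{L^2}^2\leq \max_{j\leq N}\tfrac{j(j+n-2)}{n}\,\|e_n\|_{L^2}^2$. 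Your route needs only the $L^2$ case of Lemma \ref{hol} and dispenses with \eqref{convergence} and \eqref{convergence2} altogether; the paper's route never places the error under a gradient, so it needs no analogue of your Green's-identity step. Both routes leave the same residual points at the paper's own level of detail: the mild extension of \eqref{Mehler} to the $n$-dependent integrand $(\chi_E)_n|\nabla_{\mathbb{S}_n}g_n|$ (which can be made explicit by splitting off the $\tfrac1n\big(\sum_j x_j\partial_jg\big)^2$ correction, whose contribution is $O(n^{-1/2})$ by \eqref{Mehler}), and the closing density step, where your sketch (finite-dimensional weak $(1,1)$ plus Fatou) is a reasonable way to fill in what the paper dismisses with ``by density.'' One small point worth making explicit: even when $\int_{\R^d}f\,\mbox{d}\gamma_d=0$, the pullback $f_n$ need not have zero mean on $\mathbb{S}_n$, so one should adopt the convention that $(-\Delta_{\mathbb{S}_n})^{-1/2}$ annihilates constants and note that the degree-zero component of $f_n$ tends to zero by \eqref{hol1} and in any case does not affect $\nabla_{\mathbb{S}_n}e_n$; this is harmless and does not change your argument.
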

\begin{proof}
Suppose that $P=P^{(1)}+P^{(2)}+\ldots+P^{(N)}$, where $P^{(k)}\in\mathfrak{H}_k^d$, $k=1,\,2,\,\ldots,\,N$, and let $1\leq p<\infty$ be a fixed number. Let us exploit the decomposition \eqref{decomposition} for $P^{(k)}$ to get
\begin{equation}\label{each}
 (-\Delta_{\mathbb{S}_n})^{1/2}P_n^{(k)}=(-\Delta_{\mathbb{S}_n})^{1/2}\sum_{j\leq k} Q_j^{n,d}(P^{(k)}).
\end{equation}
Since the restriction of $Q_k^{n,d}$ is a spherical harmonic of degree $k$, we may write
\begin{align*}
 (-\Delta_{\mathbb{S}_n})^{1/2}Q_k^{n,d}(P^{(k)})&=\sqrt{k(n-2+k)/n}Q_k^{n,d}(P^{k)})\\
 &=\sqrt{k}Q_k^{n,d}(P^{(k)}+\left(\sqrt{\frac{n-2+k}{n}}-1\right)\sqrt{k}Q_k^{n,d}(P^{(k)}).
 \end{align*}
and, similarly for $j<k$,
$$  (-\Delta_{\mathbb{S}_n})^{1/2}Q_j^{n,d}(P^{(k)})=\sqrt{j(n-2+j)/n}Q_j^{n,d}(P^{(k)}).$$
Plug the above expressions into \eqref{each} and apply triangle inequality to obtain
\begin{equation}\label{ify}
\begin{split}
 \left|\left| (-\Delta_{\mathbb{S}_n})^{1/2}P_n\right|\right|_{L^p(\mathbb{S}_n)}&=\left|\left| \sum_{k=1}^N(-\Delta_{\mathbb{S}_n})^{1/2}P_n^{(k)}\right|\right|_{L^p(\mathbb{S}_n)}\\
&=
\left|\left|\sum_{k=1}^N \sqrt{k}Q_k^{n,d}(P^{(k)})\right|\right|_{L^p(\mathbb{S}_n)}+\eta_n, 
\end{split}
\end{equation}
where the error term $\eta_n$ is bounded from above by
\begin{align*} \sum_{k=1}^N &\left(\sqrt{\frac{n-2+k}{n}}-1\right)\sqrt{k}||Q_k^{n,d}(P^{(k)})||_{L^p(\mathbb{S}^n)}\\
&\qquad +\sum_{1\leq j<k\leq N}\sqrt{\frac{j(n-2+j)}{n}}
||Q_j^{n,d}(P^{(k)})||_{L^p(\mathbb{S}^n)}.
\end{align*}
Note that both above sums tend to $0$ as $n\to \infty$, in view of Lemma \ref{hol}. To see the convergence of the first sum, simply use \eqref{hol2} and the fact that $\sqrt{(n-2+k)/n}\to 1$ as $n\to \infty$; to analyze the second sum, apply \eqref{hol1}.

We come back to \eqref{ify}. Applying \eqref{decomposition} again, we write
$$\left|\left|\sum_{k=1}^N \sqrt{k}Q_k^{n,d}(P^{(k)})\right|\right|_{L^p(\mathbb{S}_n)}= \left|\left|\sum_{k=1}^N \sqrt{k}P^{(k)}_n\right|\right|_{L^p(\mathbb{S}_n)}+\kappa_n,$$
where, by the triangle inequality,
$$ \kappa_n\leq \sum_{1\leq j<k\leq d}\sqrt{k}\left|\left|Q_j^{n,d}(P^{(k)})\right|\right|_{L^p(\mathbb{S}_n)}.$$
Finally, note that by Mehler's observation \eqref{Mehler},
\begin{align*}
\lim_{n\to\infty} \left|\left|\sum_{k=1}^N \sqrt{k}P^{(k)}_n\right|\right|_{L^p(\mathbb{S}_n)}&=\lim_{n\to\infty}\left|\left|\left(\sum_{k=1}^N \sqrt{k}P^{(k)}\right)_n\right|\right|_{L^p(\mathbb{S}_n)}\\
 &= \left|\left|\sum_{k=1}^N \sqrt{k}P^{(k)}\right|\right|_{L^p(\R^d,\gamma_d)}\\
 &=\left|\left|(-L)^{1/2}P\right|\right|_{L^p(\R^d,\gamma_d)}.
 \end{align*}
Combining all the above facts, we obtain the convergence
\begin{equation}\label{convergence}
\lim_{n\to\infty} \left|\left| (-\Delta_{\mathbb{S}_n})^{1/2}P_n\right|\right|_{L^p(\mathbb{S}_n)}
=\left|\left|(-L)^{1/2}P\right|\right|_{L^p(\R^d,\gamma_d)}.
\end{equation}
A similar argumentation (based on the bound $|\Psi(t)-\Psi(s)|\leq |t^2-s^2|$) shows that
\begin{equation}\label{convergence2}
\lim_{n\to\infty} \int_{\mathbb{S}_n}\Psi\Big(\big|(-\Delta_{\mathbb{S}_n})^{1/2}P_n\big|\Big)\mbox{d}\mu_n
=\int_{\R^d}\Psi\Big(\big|(-L)^{1/2}P\big|\Big)\mbox{d}\gamma_d.
\end{equation}

We are ready to establish the assertion of the theorem. Let us use the inequality \eqref{vpsc1} with the set $\Pi^{-1}(E)/\sqrt{n}\subseteq \mathbb{S}^{n-1}$ and the function $f=\frac{1}{\sqrt{n}}(-\Delta_{\mathbb{S}^{n-1}})^{1/2}(P_n\circ \rho)$, where $\rho:\mathbb{S}^{n-1}\to \mathbb{S}_n$ is given by $\rho(x)=x\sqrt{n}$. Using \eqref{diffS}, we easily compute that $|Q^cf|=|R^cf|=|(\nabla_{\mathbb{S}_n}P_n)\circ\rho|$ and $f=\big((-\Delta_{\mathbb{S}_n})^{1/2}P_n\big)\circ \rho$, so we get
\begin{align*}
 &\int_{\Pi^{-1}(E)/\sqrt{n}} |(\nabla_{\mathbb{S}_n}P_n)\circ\rho(x)|\mbox{d}x\\
&\qquad \qquad \leq 2K\int_{\mathbb{S}^{n-1}} \Psi\big(\big|\big((-\Delta_{\mathbb{S}_n})^{1/2}P_n\big)\circ \rho\big|\big)\mbox{d}x+\frac{|\Pi^{-1}(E)/\sqrt{n}|}{K-1}.
\end{align*}
Hence, substituting $z=x\sqrt{n}$ in the two integrals, we obtain
\begin{align*}
 \int_{\mathbb{S}_n} (\chi_E)_n |\nabla_{\mathbb{S}_n}P_n|\mbox{d}\mu(x)&\leq 2K\int_{\mathbb{S}_n}\Psi\Big(\big|(-\Delta_{\mathbb{S}_n})^{1/2}P_n\big|\Big)\mbox{d}\mu_n+\frac{\int_{\mathbb{S}^n}(\chi_E)_n\mbox{d}\mu_n}{K-1}.
\end{align*}
Letting $n\to \infty$ yields
$$ \int_E |\nabla_{\R^n}P|\mbox{d}\gamma_d\leq 2K\int_{\R^d}\Psi\Big(\big|(-L)^{1/2}P\big|\Big)\mbox{d}\gamma_d+\frac{\gamma_d(E)}{K-1}.$$
Putting $f=(-L)^{-1/2}P$, we obtain \eqref{vgs1} for finite linear combinations of Hermite polynomials. By density, the estimate extends to all $f$ satisfying $\int_{\R^d}\Psi(|f|)\mbox{d}\gamma_d<\infty$.

The proof of \eqref{vgs2} goes along the same lines.
\end{proof}

\def\wrong{
We turn to the statement for directional Riesz transforms.

\begin{theorem}
Let $1\leq j\leq d$ be fixed and let $R_j$ denote the directional Riesz transform associated with the Ornstein-Uhlenbeck operator.

(i) For $K>2/\pi$, $f:\R^n\to \R$ with $\int_{\R^n}\Psi(|f|)\mbox{d}\gamma_d<\infty$ and any Borel subset $E$ of $\R^n$ we have
\begin{equation}\label{rg3}
\int_E |R_jf(x)|\mbox{d}\gamma_d(x)\leq K\int_{\R^n}\Psi(|f(x)|)\mbox{d}\gamma_d+L(K)\cdot\gamma_d(E).
\end{equation}

(ii) For all $1<p<\infty$, any $f\in L^p(\R^n,\gamma_d)$ and any Borel subset $E$ of $\R^n$ we  have
\begin{equation}\label{rg4}
\int_E|R_jf(x)|\mbox{d}\gamma_d(x)\leq C_p||f||_{L^p(\R^n,\gamma_d)}\gamma_d(E)^{1-1/p}.
\end{equation}
\end{theorem}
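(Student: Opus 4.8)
The plan is to combine the duality argument used for directional Riesz transforms on Lie groups and on spheres (Theorems \ref{group1} and \ref{sphere2}) with the Poincar\'e limiting procedure of Theorem \ref{gauss1}. By a standard density argument it suffices to treat $f$ which is a finite linear combination of generalized Hermite polynomials, the general case then following verbatim as at the end of the proof of Theorem \ref{gauss1}. The first observation is that on Gauss space $\partial_j$ is \emph{not} anti-self-adjoint with respect to $\gamma_d$: its $L^2(\gamma_d)$-adjoint is the operator $\delta_j$, $\delta_j g = x_j g - \partial_j g$, so the operator dual to $R_j^L=\partial_j(-L)^{-1/2}$ is the adjoint Riesz transform $\bar{R}_j^L:=(-L)^{-1/2}\delta_j$, and
$$
\int_{\R^d}(R_j^L f)\,g\,\mathrm{d}\gamma_d=\int_{\R^d} f\,(\bar{R}_j^L g)\,\mathrm{d}\gamma_d,\qquad f,g\in L^2(\gamma_d).
$$
This is immediate from the Hermite expansion, using that $\delta_j$ maps $\mathfrak{H}_k^d$ into $\mathfrak{H}_{k+1}^d$ and that on $\mathfrak{H}_k^d$ one has $R_j^L=k^{-1/2}\partial_j$ and $\bar{R}_j^L=(k+1)^{-1/2}\delta_j$.

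The heart of the matter is then to establish, for $g:\R^d\to[-1,1]$, the dual estimates
\begin{equation*}
\int_{\R^d}\Phi\bigl(|\bar{R}_j^L g|/K\bigr)\,\mathrm{d}\gamma_d \le \frac{L(K)\,\|g\|_{L^1(\gamma_d)}}{K},\qquad K>2/\pi,
\end{equation*}
together with $\|\bar{R}_j^L g\|_{L^q(\gamma_d)}\le C_p\|g\|_{L^1(\gamma_d)}^{1/q}\|g\|_{L^\infty(\gamma_d)}^{1/p}$ for $1<q<\infty$. I would obtain these from the analogous dual bounds for the cylindrical directional Riesz transforms on the sphere --- the inequality \eqref{rs1} and the $L^q$ dual bound stated at the end of the proof of Theorem \ref{sphere2}, which hold with the same dimension-free constants on $\mathbb{S}_n$ after rescaling to radius $\sqrt n$ --- by pushing them through the Poincar\'e--Mehler limit exactly in the spirit of the convergences \eqref{convergence}--\eqref{convergence2}. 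Concretely, for a Hermite polynomial $P$ one shows that $\bar{R}_j^L P$ is the $n\to\infty$ limit --- in modulus, and through the functionals $\Phi(\cdot/K)$ and $\|\cdot\|_{L^q}$ --- of an explicit combination of the operators $Q^c_{jm}$, $m>d$, acting on $\mathbb{S}_n$; the asymptotic negligibility of the lower-order spherical-harmonic components that this requires is exactly what Lemma \ref{hol} (Arcozzi's Lemmas 6.1--6.2) provides. An alternative realization is to run the martingale comparison with the orthogonal vector martingale $(A_{jm}*F)_{m>d}$ attached to $\mathbb{S}_n$, whose suitably normalized modulus converges to $|\bar{R}_j^L\,\cdot\,|$; this variant, however, needs vector-valued forms of the dual orthogonal inequalities of Theorems \ref{martinth} and \ref{martingale_thm3}.

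Granting the dual estimates, \eqref{rg3} follows by the usual duality: with $g=\chi_E\,\operatorname{sgn}(R_j^L f)$, so that $\|g\|_{L^\infty}\le 1$ and $\|g\|_{L^1(\gamma_d)}\le\gamma_d(E)$,
\begin{equation*}
\int_E |R_j^L f|\,\mathrm{d}\gamma_d = \int_{\R^d} f\,(\bar{R}_j^L g)\,\mathrm{d}\gamma_d \le \int_{\R^d}|f|\,|\bar{R}_j^L g|\,\mathrm{d}\gamma_d \le K\!\int_{\R^d}\!\Psi(|f|)\,\mathrm{d}\gamma_d + K\!\int_{\R^d}\!\Phi\bigl(|\bar{R}_j^L g|/K\bigr)\,\mathrm{d}\gamma_d,
\end{equation*}
the last step being Young's inequality for the conjugate pair $(\Phi,\Psi)$; inserting the first dual bound yields \eqref{rg3}, and \eqref{rg4} comes out the same way with H\"older's inequality and the $L^q$ dual bound in place of Young's inequality. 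I expect the genuine obstacle to be the middle step: correctly identifying the sphere-side operator that converges to the adjoint transform $\bar{R}_j^L$ (the naive candidate $Q^c_{jn}$ applied to the lift of $f$ drags along a spurious standard-Gaussian factor coming from the extra coordinate $x_n$), and then transporting that convergence through the nonlinear functional $\Phi(\cdot/K)$ and the $L^q$-norms, which requires both the polynomial-growth Mehler convergence \eqref{Mehler} and the control of the lower spherical-harmonic components from Lemma \ref{hol}. If the adjoint transform proves too awkward to track along the limit, one can instead compare directly with a partial vector of cylindrical Riesz transforms on $\mathbb{S}_n$ and bound its modulus --- at the price of needing the orthogonal vectorial versions of Theorems \ref{log_thm} and \ref{weak_thm}, which are not currently available.
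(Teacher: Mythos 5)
Your duality identity is correct ($\partial_j^*=\delta_j$ with $\delta_jg=x_jg-\partial_jg$ in $L^2(\gamma_d)$, hence $\bar R_j^L=(-L)^{-1/2}\delta_j$ is the dual operator), and the final Young/H\"older step is the standard one. The genuine gap is the middle step, which you yourself flag but do not close: the dual estimates for $\bar R_j^L$ with the constants $L(K)/K$ and $C_p$ are asserted, not proved, and the route you sketch for them does not deliver those constants. The paper's toolkit contains no sphere-side operator whose Poincar\'e--Mehler limit is $\bar R_j^L$: the multiplication $g\mapsto x_jg$ hidden in $\delta_j$ never arises as a limit of the $Q^c_{\ell m}$ acting on lifts $P_n=P\circ\Pi_n$ of functions of the first $d$ variables. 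What one does get, for $m>d$, is $Q^c_{jm}$ applied to the lift of $(-L)^{1/2}P$ producing (up to normalization) $-x_m(\partial_jP)_n$, and in the Mehler limit the extra coordinate $x_m$ becomes a standard Gaussian \emph{independent} of the first $d$ coordinates; it cannot be discarded, only integrated out, which costs the factor $\int_\R|t|\,\mathrm{d}\gamma_1(t)=\sqrt{2/\pi}$ on the left-hand side. Consequently this path yields the logarithmic and weak-type bounds only with all constants inflated by $\sqrt{\pi/2}$, not with $K$, $L(K)$, $C_p$ as stated.

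This is in fact exactly the paper's own argument for the statement: apply the directional sphere bound \eqref{rs3} to $Q^c_{j,d+1}$ acting on the lift, pass to the limit using \eqref{Mehler} and \eqref{convergence2}, and divide out the Gaussian moment of the extra coordinate --- which produces the result only up to the factor $\sqrt{\pi/2}$, and is presumably why no directional statement accompanies Theorem \ref{gauss1} in the final text (there only the vectorial transform is treated, with the factor $2$ coming from differential subordination rather than orthogonality). Your detour through duality relocates rather than removes the obstruction: to get the $\Phi$-bound and the $L^1$--$L^\infty$ interpolation bound for $\bar R_j^L$ with the sharp constants of Theorems \ref{martinth} and \ref{martingale_thm3} you would need either a sphere-side realization of $\bar R_j^L$ compatible with Lemma \ref{hol} (the lifting construction does not supply one) or an orthogonal, differentially subordinate martingale representation of the directional Ornstein--Uhlenbeck transform directly on Gauss space, which the paper does not provide; your stated fallback, vector-valued orthogonal versions of Theorems \ref{log_thm} and \ref{weak_thm}, is acknowledged on both sides as unavailable. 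As it stands, the proposal proves the theorem only with a $\sqrt{\pi/2}$ loss, i.e.\ the same conclusion the paper's own computation reaches, and not the statement with the claimed constants.
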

\begin{proof} The argument is similar to that used in the previous proof. 
Apply $Q^c_{j,d+1}$ to the function $f=\frac{1}{n}(-\Delta_{\mathbb{S}^{n-1}})^{1/2}(P_n\circ \rho)$ (here, as previously, $\rho:\mathbb{S}^{n-1}\to\mathbb{S}_n$ is given by  $\rho(x)=x\sqrt{n}$). We get
$$ Q^c_{j,d+1}f=x_j\partial_{d+1}P_n(x)-x_{d+1}\partial_jP_n(x)=-x_{d+1}(\partial_j P)_n(x).$$
Therefore, the inequality \eqref{rs3} gives
$$ \int_{\mathbb{S}_n}(\chi_{A})_n|x_{d+1}|\;|(\partial_jP)_n|\mbox{d}\mu_n\leq K\int_{\mathbb{S}_n}\Psi\Big(\big|(-\Delta_{\mathbb{S}_n})^{1/2}P_n\big|\Big)\mbox{d}\mu_n+L(K)\mu_n(A_n)$$
and letting $n\to\infty$ implies, in view of \eqref{Mehler} and \eqref{convergence2},
$$ \int_{\R^{d+1}} \chi_A|x_{d+1}|\partial_j P|\mbox{d}\gamma_{d+1}\leq K\int_{\R^n}\Psi\Big(\big|(-L)^{1/2}P\big|\Big)\mbox{d}\gamma_d+L(K)\cdot \gamma_d(A).$$
Substituting $f=(-L)^{1/2}P$ and using $\int_\R |x|\mbox{d}\gamma_1(x)=\sqrt{\frac{\pi}{2}}$, we obtain the estimate
$$ \int_A |R_jf|\mbox{d}\gamma_d\leq \sqrt{\frac{\pi}{2}}K\int_{\R^n}\Psi\Big(\big|f\big|\Big)\mbox{d}\gamma_d+\sqrt{\frac{\pi}{2}}L(K)\cdot \gamma_d(A),$$
and the general case follows easily by approximation. To show the weak-type bounds, we argue similarly: an application of \eqref{rs4} gives
$$ \int_{\mathbb{S}_n}(\chi_{A})_n|x_{d+1}|\;|(\partial_jP)_n|\mbox{d}\mu_n\leq C_p\mu_n(A_n)^{1/q}|| (-\Delta_{\mathbb{S}_n})^{1/2}P_n||_{L^p(\mathbb{S}_n)}.$$
If we let $n\to \infty$ and apply Mehler's obsevation \eqref{Mehler}, we get
$$ \int_{\R^{d+1}} \chi_A|x_{d+1}|\partial_j P\mbox{d}\gamma_{d+1}\leq C_p\gamma_d(A)^{1/q}||(-L)^{1/2}P||_{L^p(\mathbb{S}_n)},$$
or, we substitute $f=(-L)^{1/2}P$, 
$$ ||R_jf||_{L^{p,\infty}(\R^n,\gamma_d)}\leq \sqrt{\frac{2}{\pi}}C_p||f||_{L^p(\R^n,\gamma_d)}.$$
By density, this estimate holds true for all $f\in L^p(\R^n,\gamma_d)$.
\end{proof}
}

\subsection*{Acknowledgements}  Rodrigo Ba\~nuelos gratefully acknowledges the many useful conversation with Fabrice Baudoin on topics related to this paper.

\end{document}